\documentclass{article}
\usepackage{graphicx} 

\usepackage{amsopn}
\usepackage{amsmath,amsthm,amssymb}

\usepackage[T1]{fontenc}
\usepackage{tabularx}
\usepackage[margin=2cm]{geometry}
\usepackage{amsfonts}
\usepackage[english]{babel}
\usepackage[utf8]{inputenc}
\usepackage{hyperref}
\hypersetup{colorlinks=true, linkcolor=black, citecolor=black, urlcolor=black, hypertexnames=false}
\usepackage{cleveref}

\usepackage{xcolor}
\usepackage{adjustbox}
\usepackage{nicematrix}
\usepackage{tikz}
\usepackage{float}

\newcommand{\g}{\mathfrak{g}}
\newcommand{\h}{\mathfrak{h}}
\renewcommand{\u}{\mathfrak{u}}

\newcommand{\R}{\mathbb{R}}
\newcommand{\<}{\langle}
\renewcommand{\>}{\rangle}

\newcommand{\Z}{\xi}

\newcommand{\nc}{\newcommand}
\nc{\PP}{{\mathbb P}}
\nc{\aff}{\mathfrak{aff} }
\nc{\ad}{\operatorname{ad}}
\nc{\tr}{\operatorname{tr}}
\nc{\alt}{\raise1pt\hbox{$\bigwedge$}}
\nc{\prodint}{\langle \cdotp,\cdotp \rangle }

\newcommand{\ri}{{\rm (i)}}
\newcommand{\rii}{{\rm (ii)}}
\newcommand{\riii}{{\rm (iii)}}

\theoremstyle{plain}
\newtheorem{thm}{Theorem}[section]
\newtheorem{prop}[thm]{Proposition}
\newtheorem{teo}[thm]{Theorem}
\newtheorem{coro}[thm]{Corollary}
\newtheorem{lema}[thm]{Lemma}

\theoremstyle{definition}
\newtheorem{de}[thm]{Definition}

\newtheorem{obs}[thm]{Observation}



\title{On coKähler structures on Lie algebras}
\author{Javier Liendo\thanks{Universidad Nacional de C\'ordoba, 5000 C\'ordoba, Argentina. Email: \texttt{javierliendo@unc.edu.ar}.}
	\and
	Marcos Origlia\thanks{Universidad Nacional de C\'ordoba, 5000 C\'ordoba, Argentina; Guangdong Technion -- Israel Institute of Technology, Shantou, China. Email: \texttt{marcos.origlia@unc.edu.ar}.}}
\date{}

\begin{document}
	
	\maketitle
	\begin{abstract}
		We study coKähler structures on real Lie algebras by using the Fino--Vezzoni correspondence, which relates them to Kähler Lie algebras endowed with compatible skew-symmetric derivations. 
		Our first result completes the flat case: every odd-dimensional flat Lie algebra admits a coKähler structure, giving a converse to the theorem of Fino and Vezzoni asserting that unimodular coKähler Lie algebras are flat and hence solvable. 
		We then characterize almost abelian Lie algebras carrying coKähler structures and we determine the possible Reeb directions. 
		As a consequence, every non-unimodular almost abelian coKähler Lie algebra splits as the direct product of a non-unimodular Kähler Lie algebra and a line. 
		Finally, we apply these results in low dimensions: we classify the almost abelian cases in dimensions five and seven, and we give the classification, up to Lie algebra isomorphism, of five-dimensional coKähler Lie algebras by reducing the corresponding extensions arising from four-dimensional Kähler Lie algebras. 
		The resulting examples show that the direct product splitting above is a special feature of the almost abelian setting.
	\end{abstract}

	\section{Introduction}
	CoKähler geometry is the odd-dimensional counterpart of Kähler geometry (see, for instance, \cite{Li}). 
	For Lie groups endowed with left-invariant structures, the problem can be formulated entirely at the Lie algebra level. 
	A theorem of Fino and Vezzoni identifies coKähler Lie algebras with Kähler Lie algebras equipped with a skew-symmetric derivation commuting with the complex structure. 
	Thus, in the left-invariant setting, the odd-dimensional theory is controlled by Kähler Lie algebras together with one additional algebraic datum. 
	This correspondence is the main tool of the paper: we use it both to prove structural results and to obtain explicit classifications for almost abelian Lie algebras.
	
	Fino and Vezzoni proved that every unimodular Lie algebra admitting a coKähler structure is flat and hence solvable. 
	This gives a strong obstruction, and it naturally leads to the converse problem in the flat setting. 
	We prove that every odd-dimensional flat Lie algebra admits a coKähler structure. 
	The proof relies on the description of flat Lie algebras due to Barberis, Dotti and Fino, together with the fact that every even-dimensional flat Lie algebra admits a Kähler structure. 
	Consequently, the flat part of the theory is complete: unimodular coKähler Lie algebras are flat, and every odd-dimensional flat Lie algebra carries a coKähler structure.
	
	We then study the coKähler condition in the almost abelian setting. 
	Recall that an almost abelian Lie algebra is a Lie algebra with an abelian ideal of codimension one; equivalently, it can be written as $\g=\R x\ltimes_M\u$, where the bracket is encoded by a single endomorphism $M$ of $\u$. 
	We give a characterization of those matrices $M$ for which $\g$ admits a coKähler structure. 
	In one case, $M$ is unitary with respect to a suitable Hermitian structure. 
	In the remaining case, after choosing an adapted basis, $M$ has one distinguished real direction and a unitary block on the complementary invariant subspace. 
	The same argument also describes all possible Reeb directions.
	
	A consequence of this characterization is a splitting result. 
	If an almost abelian coKähler Lie algebra is non-unimodular, then it is not only obtained from a Kähler Lie algebra by adding a skew-symmetric derivation: it actually splits as a direct product $\h\times\R$, where $\h$ is a non-unimodular almost abelian Kähler Lie algebra. 
	This shows that the almost abelian case is particularly rigid, and it raises the question of whether such a product decomposition should hold more generally for non-unimodular coKähler Lie algebras.
	
	The final part of the paper addresses this question in low dimension and connects the preceding results with known classifications. 
	Using the almost abelian characterization, we first list the almost abelian coKähler Lie algebras in dimensions five and seven. 
	We then classify all five-dimensional coKähler Lie algebras up to Lie algebra isomorphism. 
	By the Fino--Vezzoni correspondence, these Lie algebras arise from four-dimensional Kähler Lie algebras and compatible skew-symmetric derivations; the relevant four-dimensional Kähler Lie algebras are taken from Ovando's classification. 
	Calvaruso and Perrone classified five-dimensional coKähler structures up to equivalence of the full structure. 
	Here we focus instead on the underlying Lie algebras, and several of the corresponding extensions become isomorphic. 
	After identifying these repetitions, we obtain a reduced five-dimensional list. 
	The non-unimodular examples in this list show that the product splitting proved for almost abelian Lie algebras does not hold in general.
	
	\section{Preliminaries}
	\subsection{CoKähler structures}
	We recall the geometric structures studied in this paper, first on manifolds and then in the left-invariant setting on Lie algebras.
	
	\begin{de}
		Let $M$ be a $(2n+1)$-dimensional smooth manifold. An {\em almost contact metric structure} on $M$ is a quadruple $(J,\Z,\alpha,g)$, where $J$ is a smooth $(1,1)$-tensor, $\Z$ is a smooth vector field, $\alpha$ is a smooth $1$-form and $g$ is a Riemannian metric on $M$ satisfying the following conditions:
		\[J^2=-\mathrm{Id}+\alpha\otimes \Z, \quad \alpha(\Z)=1,\]
		\[g(JX,JY)=g(X,Y)-\alpha(X)\alpha(Y), \quad X,Y\in\Gamma(TM).\]
		An almost contact metric structure $(J,\Z,\alpha,g)$ is said to be {\em normal} if
		\[N_J(X,Y)=-2d\alpha(X,Y)\Z, \quad \text{for any $X,Y\in \Gamma(TM)$},\]
		where $N_J$ is the Nijenhuis tensor defined by 
		\[N_J(X,Y)=[JX,JY]-J[JX,Y]-J[X,JY]+J^2[X,Y].\]
	\end{de}
	
	Let $(M,J,\Z,\alpha,g)$ be an almost contact metric manifold. It follows that $|\Z|= 1$, $J\Z = 0$, $\alpha\circ J = 0$, and $J$ is skew-symmetric. We define the {\em fundamental form} on $M$ as the $2$-form given by 
	\[\omega(\cdot,\cdot):=g(J\cdot,\cdot).\]
	\begin{de}
		A normal almost contact metric structure $(J,\Z,\alpha,g)$ is said to be a {\em coKähler structure} if 
		\[d\alpha=0, \quad d\omega=0.\]
	\end{de}
	CoK\"ahler structures are also known as cosymplectic structures; see, for instance, \cite{Blair}. The terminology coK\"ahler emphasizes their analogy with K\"ahler geometry.
	
	We focus on left-invariant coKähler structures on odd-dimensional Lie groups.
	Recall that a {\em coKähler structure} on a real Lie algebra $\g$ of dimension $2n+1$ is a quadruple $(J,\Z,\alpha,\prodint)$, where $\Z$ is a non-zero vector of $\g$, $\alpha\in\g^*$, $J$ is an endomorphism of $\g$, and $\prodint$ is an inner product on $\g$ satisfying
\[J^2=-\mathrm{Id}+\alpha \otimes \Z, \quad d\alpha=0, \quad N_J=0,\]
\[\<J\cdot,J\cdot\>=\prodint- \alpha \otimes \alpha(\cdot, \cdot), \quad d\omega=0,\]
where $\omega$ is the fundamental form.
	
If $G$ is the simply connected Lie group associated with $\g$, then a left-invariant coKähler structure on $G$ determines a coKähler structure on the Lie algebra $\g$. Conversely, a coKähler structure on $\g$ extends to a left-invariant coKähler structure on the corresponding simply connected Lie group $G$.
	
	For coKähler Lie algebras, we have the following relation with Kähler Lie algebras, which shows that coKähler structures are the analogs of Kähler structures in odd dimensions.
	
	\begin{teo} \label{F-V} \cite{FV}
		CoKähler Lie algebras of dimension $2n+1$ are in one-to-one correspondence with $2n$-dimensional Kähler Lie algebras endowed with a derivation $D$ which is skew-symmetric and commutes with the complex structure.
	\end{teo}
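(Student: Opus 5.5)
The plan is to establish the two directions of the correspondence explicitly and then check that they are mutually inverse. Given a coKähler Lie algebra $(\g,J,\Z,\alpha,\prodint)$ of dimension $2n+1$, set $\h:=\ker\alpha$. Since $d\alpha=0$, we have $\alpha([\g,\g])=0$, so $\h$ is an ideal of codimension one containing $[\g,\g]$. Since $\alpha\circ J=0$ and $J\Z=0$, the space $\h$ is $J$-invariant, and $J|_\h$ satisfies $J^2=-\mathrm{Id}$. The compatibility condition shows that $\prodint|_\h$ is $J$-Hermitian and that $\Z\perp\h$ (take $Y=\Z$ and use $J\Z=0$, $\alpha(\Z)=1$). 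Thus $\g=\R\Z\ltimes_D\h$ with $D:=\ad_\Z|_\h$, which is a derivation of $\h$ because $\h$ is an ideal.

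The key step, and the one I expect to be the main obstacle, is to show two things: that $(\h,J|_\h,\prodint|_\h)$ is Kähler, and that $D$ is skew-symmetric and commutes with $J$. Closedness of $\omega|_\h$ for the bracket of $\h$ is immediate from $d\omega=0$ restricted to $\h$. Integrability of $J|_\h$ follows from $N_J=0$ evaluated on $X,Y\in\h$, together with the fact that brackets stay in $\h$. For $D$, I will use two identities. First, $N_J(\Z,X)=0$ with $J\Z=0$ gives $-J[\Z,JX]+J^2[\Z,X]=0$. Since $[\Z,X]\in\h$, this reads $J(DJX-JDX)=0$, so $[D,J]=0$ on $\h$. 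Second, $d\omega(\Z,X,Y)=0$ with $\omega(\Z,\cdot)=0$ gives $\omega(DX,Y)+\omega(X,DY)=0$; that is, $D$ is $\omega$-skew. Combining this with $DJ=JD$ and $\<X,Y\>=\omega(X,JY)$ (up to sign convention) yields $\<DX,Y\>+\<X,DY\>=0$. So $D$ is skew-symmetric.

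Conversely, given a Kähler Lie algebra $(\h,J_0,\prodint_0)$ and a skew-symmetric derivation $D$ commuting with $J_0$, I will build $\g=\R\Z\ltimes_D\h$ and define:
\begin{itemize}
\item $J$ by $J_0$ on $\h$ and $J\Z=0$;
\item $\alpha$ as the dual of $\Z$ (so $\alpha|_\h=0$ and $\alpha(\Z)=1$);
\item the inner product as the orthogonal sum of $\prodint_0$ with $\alpha\otimes\alpha$.
\end{itemize}
Then $J^2=-\mathrm{Id}+\alpha\otimes\Z$ and the metric compatibility hold by construction, and $d\alpha=0$ because $[\g,\g]\subset\h$. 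The conditions $N_J=0$ and $d\omega=0$ reduce, by splitting arguments into the cases $(X,Y)$ and $(\Z,X)$, respectively $(X,Y,W)$ and $(\Z,X,Y)$ with $X,Y,W\in\h$, to the integrability and closedness of the Kähler structure on $\h$ together with the same two identities above read backwards.

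Finally, I will check that the two constructions are mutually inverse, up to the natural identifications (isomorphisms preserving the structures). Note that $D$ is determined by $\Z$ as $\ad_\Z|_\h$, so the correspondence is between triples $(\h,\text{Kähler structure},D)$ and coKähler Lie algebras. Beyond the $N_J$ and $d\omega$ computations in the second paragraph, the rest is routine bookkeeping.
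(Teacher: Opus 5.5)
The paper does not prove this statement. It is quoted from \cite{FV} and used as a black box, so there is no in-paper argument to compare with. Your proof is correct and self-contained.

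The two identities you isolate carry the whole content:
\begin{itemize}
\item Since $J\xi=0$ and $[\xi,X]\in\h$, one gets $N_J(\xi,X)=-J(DJX-JDX)$.
\item Since $\omega(\xi,\cdot)=0$ and $\omega(\cdot,\xi)=\alpha\circ J=0$, the condition $d\omega(\xi,X,Y)=0$ is equivalent to $\omega(DX,Y)+\omega(X,DY)=0$.
\end{itemize}
With $\omega=\langle J\cdot,\cdot\rangle$ one has $\langle X,Y\rangle=\omega(X,JY)$ on $\h$ exactly, so the ``up to sign convention'' caveat is unnecessary.

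Taking $Y=\xi$ in the compatibility identity gives $\alpha=\langle \xi,\cdot\rangle$. This is what makes your inverse construction (unit $\xi\perp\h$, $\alpha=\xi^\flat$) recover the original data. Moreover, any structure-preserving isomorphism sends $\xi_1\mapsto\xi_2$ and $\ker\alpha_1\to\ker\alpha_2$, and it intertwines $D_1$ and $D_2$. Hence the bijection also holds at the level of isomorphism classes, which justifies your ``routine bookkeeping'' remark.

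The usual Riemannian route invokes the characterization of coK\"ahler structures by $\nabla J=0$, hence $\nabla\xi=0$, so that $\ad_\xi=\nabla_\xi$ is automatically skew-symmetric and $J$-commuting. Your argument instead uses only the defining conditions $N_J=0$ and $d\omega=0$ evaluated on $\xi$, and never touches the Levi-Civita connection. It also makes the converse the same two computations read backwards. The Riemannian route is quicker in the forward direction once $\nabla J=0$ is available, but for the converse it still requires computing $\nabla$ on $\R\xi\ltimes_D\h$.
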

	
	Under this correspondence, any Lie algebra $\g$ with a coKähler structure $(J,\Z,\alpha,\prodint)$ can be identified with a Kähler Lie algebra $(\h,J,\prodint)$ together with a derivation $D$, where the Hermitian structure on $\h$ is defined by the restriction of $J$ and $\prodint$ to $\h$. Note that there is an abuse of notation denoting by $J$ both the endomorphism on $\g$ and the complex structure on $\h$. Under this identification,
	\[
	\g=\R \xi \ltimes_D \h.
	\]
	Hence the triple $(\xi,\h,D)$ with the Hermitian structure $(\prodint, J)$ on $\h$ encodes the coKähler structure on $\g$, and we will use this notation throughout the paper.

\subsection{Flat Lie algebras}
Let $(\g,\prodint)$ be a metric Lie algebra. The Levi-Civita connection associated to the metric can be computed using the Koszul formula
\[2\<\nabla_XY,Z\>=\<[X,Y],Z\>-\<[Y,Z],X\>+\<[Z,X],Y\>,\]
for any $X,Y,Z\in \g$.
	
	The {\em Riemann curvature tensor} $R$ is given by
	\[R(X,Y)Z=\nabla_X \nabla_YZ-\nabla_Y \nabla_X Z- \nabla_{[X,Y]}Z,\]
	for any $X,Y,Z\in \g$. 
	A metric Lie algebra $(\g,\prodint)$ is said to be {\em flat} if $R\equiv0.$ 
	We recall a characterization of flat Lie algebras originally proved by Milnor \cite{Mi} and then improved by Barberis, Dotti  and Fino \cite{BDF}.
	
	\begin{prop}\cite{BDF}
		Let $(\g,\prodint)$ be a flat Lie algebra. Then $\g$ decomposes orthogonally as 
		\[\g=\mathfrak{z}(\g)\oplus \h \oplus \g^{1},\]
		where $\mathfrak{z}(\g)$ is the center of $\g$, $\h$ is an abelian Lie subalgebra, the commutator ideal $\g^{1}$ is abelian and the following conditions are satisfied: 
        \begin{itemize}
            \item[\ri] $\mathrm{ad}:\h \to \mathfrak{so}(\g^{1})$ is injective and $\g^{1}$ is even dimensional;
            \item[\rii] $\mathrm{ad}_X=\nabla_X$ for any $X\in \mathfrak{z}(\g)\oplus \h.$ 
		\end{itemize}
		In particular, $\mathfrak{g}$ is isomorphic to a Lie subalgebra of $\mathbb{R}^{s} \times \mathfrak{e}(\mathfrak{g}^{1})$, where $\mathfrak{e}(\mathfrak{g}^{1}) = \mathfrak{so}(\mathfrak{g}^{1}) \ltimes \mathfrak{g}^{1}$.
	\end{prop}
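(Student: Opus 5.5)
The plan is to start from the fact that flatness makes the Levi-Civita map $\nabla:\g\to\mathfrak{so}(\g)$, $X\mapsto\nabla_X$, a Lie algebra homomorphism. Indeed, $R\equiv0$ says exactly that $\nabla_{[X,Y]}=[\nabla_X,\nabla_Y]$. Each $\nabla_X$ is skew-symmetric by the Koszul formula, and the connection is torsion free, so $[X,Y]=\nabla_XY-\nabla_YX$. Following Milnor, I set $\u=\ker\nabla$. This is an ideal, being the kernel of a homomorphism. It is abelian, because for $X,Y\in\u$ we get $[X,Y]=\nabla_XY-\nabla_YX=0$. Let $\mathfrak b=\u^\perp$.

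\emph{Step 1 (Milnor's splitting).} I aim to show that $\mathfrak b$ is an abelian subalgebra and that $\ad_X|_{\u}$ is skew-symmetric for $X\in\mathfrak b$. The image $\nabla(\g)\cong\g/\u$ sits inside $\mathfrak{so}(\g)$, so it is of compact type. I then want to prove it is abelian. I would do this by showing that $\nabla_X$ maps $\g$ into $\u$ for appropriate $X$, and that $\nabla_X Y=0$ for $X,Y\in\mathfrak b$, using the Koszul formula together with the orthogonality $\mathfrak b\perp\u$. Once this holds, $[X,Y]=\nabla_XY-\nabla_YX=0$ on $\mathfrak b$. For $X\in\mathfrak b$ and $U\in\u$ we also get $[X,U]=\nabla_XU$, since $\nabla_UX=0$; this is skew in $U$. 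I expect this step to be the main obstacle. Ruling out a semisimple compact part in $\nabla(\g)$ needs a genuine argument, and the cleanest route is Milnor's original one: choose $X$ with $\ad_X$ having a nonzero real eigenvalue, and derive a contradiction with skew-symmetry via the Koszul formula.

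\emph{Step 2 (refinement).} Now $\g=\mathfrak b\oplus\u$, where $\ad(\mathfrak b)$ acts on the abelian ideal $\u$ by commuting skew-symmetric operators. Consequently $\g^1=[\mathfrak b,\u]=\sum_{X\in\mathfrak b}\operatorname{Im}\ad_X|_{\u}$. Simultaneously block-diagonalize the commuting family. Then $\u$ splits orthogonally as $\u_0\oplus\g^1$, where $\u_0$ is the common kernel and $\g^1$ is the sum of the two-dimensional rotation blocks. This gives that $\g^1$ is even dimensional and abelian. Next let $\mathfrak k\subset\mathfrak b$ be the kernel of $\ad:\mathfrak b\to\mathfrak{so}(\g^1)$. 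Put $\mathfrak z(\g)=\mathfrak k\oplus\u_0$, which is central, and let $\h=\mathfrak k^\perp\cap\mathfrak b$. This yields the orthogonal decomposition with $\ad:\h\to\mathfrak{so}(\g^1)$ injective, which is (i).

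\emph{Step 3 (the remaining claims).} For (ii), take $X\in\mathfrak z(\g)\oplus\h$. The operator $\ad_X$ is skew-symmetric and vanishes on $\mathfrak z(\g)\oplus\h$. Plugging this into the Koszul formula gives $2\<\nabla_XY,Z\>=2\<[X,Y],Z\>$, so $\nabla_X=\ad_X$. Finally, the map $Z+H+U\mapsto\bigl(Z,(\ad_H|_{\g^1},U)\bigr)$ embeds $\g$ into $\R^s\times\mathfrak e(\g^1)$. It is a homomorphism because all brackets are of the form $[H,U]=\ad_HU$, and it is injective by (i).
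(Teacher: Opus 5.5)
The genuine gap is Step 1. It carries the entire content of Milnor's theorem, and you leave it as a plan.

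\textbf{What is easy.} Since $\nabla_U=0$ for $U\in\mathfrak u$, torsion-freeness gives $\nabla_XU=[X,U]\in\mathfrak u$. So every $\nabla_X$ preserves $\mathfrak u$, and, being skew, also preserves $\mathfrak b=\mathfrak u^\perp$. Hence $[\mathfrak b,\mathfrak b]\subseteq\mathfrak b$. Because $\nabla|_{\mathfrak b}$ is injective, $\mathfrak b\cong\nabla(\mathfrak g)\subseteq\mathfrak{so}(\mathfrak g)$ is a compact Lie algebra, so $\mathfrak b=\mathfrak z(\mathfrak b)\oplus\mathfrak s$ with $\mathfrak s=[\mathfrak b,\mathfrak b]$ compact semisimple.

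\textbf{What is missing.} You must show $\mathfrak s=0$, and neither of your suggestions does this. Deriving $\nabla_XY=0$ for $X,Y\in\mathfrak b$ from the Koszul formula and $\mathfrak b\perp\mathfrak u$ is circular. For $X,Y,Z\in\mathfrak b$ the formula involves only brackets inside $\mathfrak b$, and the vanishing of $\nabla$ on $\mathfrak b\times\mathfrak b$ is equivalent to $\mathfrak b$ being abelian. Looking for an $X$ such that $\ad_X$ has a nonzero real eigenvalue cannot detect $\mathfrak s$ either. Every $\ad_X$ already has purely imaginary spectrum, whether or not $\mathfrak s=0$: its action on $\mathfrak u$ is skew, and its action on $\mathfrak g/\mathfrak u\cong\mathfrak b$ is the adjoint action of a compact Lie algebra.

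One way to close the gap is Whitehead's first lemma. By flatness, $X\mapsto\nabla_X|_{\mathfrak b}$ is a representation of $\mathfrak s$ on $\mathfrak b$. For this representation the inclusion $\mathfrak s\hookrightarrow\mathfrak b$ is a $1$-cocycle, since the cocycle identity is exactly $[X,Y]=\nabla_XY-\nabla_YX$. Whitehead's lemma then gives $v\in\mathfrak b$ with $\nabla_Xv=X$ for all $X\in\mathfrak s$. Skew-symmetry of $\nabla_X$ and the Koszul identity $\langle\nabla_XX,Z\rangle=\langle[Z,X],X\rangle$ yield
\[
|X|^2=\langle\nabla_Xv,X\rangle=-\langle v,\nabla_XX\rangle=-\langle[v,X],X\rangle,\qquad X\in\mathfrak s .
\]
Summing over an orthonormal basis of $\mathfrak s$ gives $\tr(\ad_v|_{\mathfrak s})=-\dim\mathfrak s$. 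But $\ad_v|_{\mathfrak s}=\ad_{v_{\mathfrak s}}|_{\mathfrak s}$ is traceless, so $\mathfrak s=0$.

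A geometric alternative also works. The simply connected group with Lie algebra $\mathfrak g$ is isometric to $\mathbb{R}^n$. The compact subgroup with Lie algebra $\mathfrak s$ would act freely by isometries through left translations. Yet a compact group of isometries of $\mathbb{R}^n$ fixes the barycenter of any orbit.

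Once $\mathfrak b$ is abelian, Steps 2 and 3 go through, with two small repairs. First, you must also check that every central element lies in $\mathfrak k\oplus\mathfrak u_0$: if $b+u$ is central, then $\ad_b|_{\mathfrak u}=0$ forces $b\in\mathfrak k$, and $[\mathfrak b,u]=0$ forces $u\in\mathfrak u_0$. Second, in the Koszul computation for (ii), the term $\langle[Y,Z],X\rangle$ vanishes because $X\perp\mathfrak g^1=[\mathfrak g,\mathfrak g]$. You need this in addition to the skew-symmetry of $\ad_X$; the fact that $\ad_X$ vanishes on $\mathfrak z(\mathfrak g)\oplus\mathfrak h$ does not supply it.
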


	\begin{obs}
		As a consequence, any flat Lie algebra is solvable and unimodular.
	\end{obs}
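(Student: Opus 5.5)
The Observation follows from the structure given by the Proposition of \cite{BDF}, so I will read off both properties directly from the orthogonal decomposition $\g=\mathfrak{z}(\g)\oplus\h\oplus\g^{1}$.

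\emph{Solvability.} I will show that the derived series terminates. By the Proposition the commutator ideal $\g^{1}=[\g,\g]$ is abelian. Hence
\[
\g^{(2)}=[\g^{1},\g^{1}]=0 .
\]
So $\g$ is $2$-step solvable, and in particular solvable.

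\emph{Unimodularity.} I will show that $\tr\ad_X=0$ for every $X\in\g$. By linearity it suffices to treat $X$ in each summand of the decomposition.
\begin{itemize}
\item For $X\in\mathfrak{z}(\g)$ we have $\ad_X=0$.
\item For $X\in\h$, condition \rii{} gives $\ad_X=\nabla_X$. The Levi-Civita connection is metric, so $\nabla_X$ is skew-symmetric with respect to $\prodint$, and therefore $\tr\ad_X=0$.
\item For $X\in\g^{1}$, the map $\ad_X$ sends $\g$ into $\g^{1}$, and it vanishes on $\g^{1}$ because $\g^{1}$ is abelian.
\end{itemize}
In the third case I compute the trace in a basis adapted to the splitting $\g=(\mathfrak{z}(\g)\oplus\h)\oplus\g^{1}$. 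The matrix of $\ad_X$ has only a nonzero off-diagonal block, mapping the first summand into $\g^{1}$. Its diagonal blocks are zero, so $\ad_X$ is nilpotent and $\tr\ad_X=0$.

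Summing the three contributions gives $\tr\ad_X=0$ for all $X\in\g$, which is unimodularity. Nothing here is really an obstacle; the only point needing care is the $\g^{1}$ case, which is settled by the block-triangular argument above.
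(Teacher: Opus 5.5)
Your argument is correct and matches the paper's intent: the paper gives no separate proof and treats the observation as an immediate read-off from the Barberis--Dotti--Fino decomposition $\g=\mathfrak{z}(\g)\oplus\h\oplus\g^{1}$. Your proof supplies exactly that read-off. Solvability follows from $\g^{1}$ being abelian. Unimodularity follows because $\ad_X$ is zero, skew-symmetric (via $\ad_X=\nabla_X$), or square-zero on the three summands.
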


	\subsection{Almost abelian Lie algebras}
	We recall that a Lie group $G$ is said to be {\em almost abelian} if its Lie algebra $\g$ has an abelian ideal of codimension one. Such a Lie algebra will also be called almost abelian, and can be written as
	\[
	\g=\R x \ltimes_{\mathrm{ad}_x} \mathfrak{u},
	\]
	where $\mathfrak{u}$ is an abelian ideal of $\g$ and $x\notin\u$. Accordingly, the Lie group $G$ is a semidirect product $G=\R \ltimes_{\phi} \R^{d}$ for some $d\in \mathbb{N}$, where the action is given by $\phi(t)=e^{t \mathrm{ad}_x}$. We will usually write $M:=\mathrm{ad}_x|_{\u}$. Clearly, it is $2$-step solvable. Moreover, if $M$ is nilpotent (respectively, $\tr(M)=0$), then $\g$ is nilpotent (respectively, unimodular).
	
	Regarding the isomorphism classes of almost abelian Lie algebras, it is well known that:
	\begin{lema}\label{isomorf casi abel} \cite{casi-abel}
		Two almost abelian Lie algebras $\g_1=\R x_1 \ltimes_{M_1} \mathfrak{u}_1$ and $\g_2=\R x_2 \ltimes_{M_2} \mathfrak{u}_2$ are isomorphic if and only if there exists $c\neq 0$ such that $M_1$ and $cM_2$ are conjugate.
	\end{lema}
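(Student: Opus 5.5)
We have to prove the statement attributed to \cite{casi-abel}: $\g_1=\R x_1\ltimes_{M_1}\u_1$ and $\g_2=\R x_2\ltimes_{M_2}\u_2$ are isomorphic if and only if $M_1$ is conjugate to $cM_2$ for some $c\neq 0$. Both directions reduce to elementary linear algebra once the relevant abelian ideals are identified.

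\emph{Sufficiency.} Suppose $PM_2P^{-1}=cM_1$ with $P:\u_2\to\u_1$ a linear isomorphism and $c\neq0$. Define $\phi:\g_2\to\g_1$ by
\[
\phi(x_2)=c^{-1}x_1,\qquad \phi|_{\u_2}=P.
\]
Brackets inside $\u_2$ vanish on both sides. For $u\in\u_2$,
\[
\phi([x_2,u])=PM_2u,\qquad [\phi(x_2),\phi(u)]=c^{-1}M_1Pu,
\]
so we need $PM_2P^{-1}=c^{-1}M_1$. The relation is therefore symmetric up to replacing $c$ by $c^{-1}$, and we simply choose the scaling accordingly. Hence $\phi$ is an isomorphism.

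\emph{Necessity.} Let $\phi:\g_1\to\g_2$ be an isomorphism. The main point is that $\phi(\u_1)$ need not equal $\u_2$, since a Lie algebra may have several codimension-one abelian ideals.

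\emph{Case A: $\phi(\u_1)=\u_2$.} Write $\phi(x_1)=a x_2+w$ with $w\in\u_2$ and $a\neq0$, the latter because $\phi$ is onto. For $u\in\u_1$,
\[
\phi(M_1u)=[ax_2+w,\phi(u)]=aM_2\phi(u),
\]
using that $\u_2$ is abelian. So $P=\phi|_{\u_1}$ satisfies $PM_1P^{-1}=aM_2$, which is the claim.

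\emph{Case B: $\phi(\u_1)\neq\u_2$.} Set $\mathfrak{a}=\phi(\u_1)$, an abelian ideal of codimension one in $\g_2$. Then $\mathfrak{a}+\u_2=\g_2$ and $\mathfrak{b}=\mathfrak{a}\cap\u_2$ has codimension one in $\u_2$. Pick $y=x_2+v\in\mathfrak{a}$ with $v\in\u_2$. Since $\mathfrak{a}$ is abelian, $[y,\mathfrak{b}]=0$, and $[y,b]=M_2b$, so $M_2|_{\mathfrak{b}}=0$. Thus $\operatorname{rank}M_2\le1$, and by symmetry (apply the same argument to $\phi^{-1}$) $\operatorname{rank}M_1\le1$.

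If both ranks are $0$, both algebras are abelian and the claim is trivial. Otherwise both ranks are $1$, since the derived algebras $\g_i^1=\operatorname{Im}M_i$ have equal dimension. A rank-one $M$ is determined up to conjugacy by whether it is nilpotent, and if not, by its trace. This invariant is detected intrinsically: $M$ is nilpotent exactly when $\g^1\subseteq\mathfrak{z}(\g)$, i.e.\ when $\g$ is nilpotent, and the trace is determined up to scaling once we allow $c$. So $M_1$ and a suitable $cM_2$ are conjugate.

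I expect this degenerate Case B to be the only real obstacle, and the rank-one normal-form check is routine once the rank bound is established.
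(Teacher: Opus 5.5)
The paper gives no proof of this lemma; it is imported from \cite{casi-abel}, so there is no internal argument to compare with. Your proposal is a correct, self-contained proof. Its structure is the standard way to prove this result: either $\phi(\u_1)=\u_2$, which gives $PM_1P^{-1}=aM_2$ directly, or the codimension-one abelian ideal is not unique, which forces $\operatorname{rank}M_i\le 1$.

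Two remarks.

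\emph{Scaling in the sufficiency part.} As written, the scaling is inverted. If $PM_2P^{-1}=cM_1$, the isomorphism is $\phi(x_2)=c\,x_1$, $\phi|_{\u_2}=P$, since $[c\,x_1,Pu]=cM_1Pu=PM_2u$. The choice $\phi(x_2)=c^{-1}x_1$ would require $PM_2P^{-1}=c^{-1}M_1$. Your phrase ``choose the scaling accordingly'' rescues this, but you should state the map correctly.

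\emph{Case B can be sharpened.} You use only that $\mathfrak a$ is abelian; using that it is an ideal removes the need for the rank-one normal form. Since $\mathfrak a$ is an ideal containing $y=x_2+v$, for every $u\in\u_2$ we get $-M_2u=[u,y]\in\mathfrak a\cap\u_2=\mathfrak b\subseteq\ker M_2$. Hence $M_2^2=0$, and symmetrically $M_1^2=0$. Square-zero matrices of the same size and equal rank are conjugate, so $c=1$ works. In particular, the non-nilpotent rank-one algebra $\mathfrak{aff}(\R)\times\R^{k}$ never falls into Case B, because its codimension-one abelian ideal is unique.

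If you keep your version instead, the classification you call routine follows from the identity $M^2=(\tr M)M$ for rank-one $M$. This shows that $M$ is conjugate to $\mathrm{diag}(\tr M,0,\dots,0)$ when $\tr M\neq0$. When $\tr M=0$, it is conjugate to a single $2\times2$ nilpotent Jordan block plus zeros.
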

	
	In general, it is not easy to determine whether a given Lie group $G$ admits a lattice. A well-known restriction is that, if this is the case, then $G$ must be unimodular \cite{Mi}; equivalently, when $G$ is connected, $\mathrm{tr}(\mathrm{ad}_y)=0$ for every $y\in \g$. An important feature of almost abelian Lie groups is that they admit a concrete lattice criterion:
	\begin{prop}\label{retic casi abel}\cite{Bock}
		Let $G=\R \ltimes_{\phi}\R^{n}$ be a unimodular almost abelian Lie group. Then $G$ admits a lattice if and only if there exists a $t_0\neq0$ such that $\phi(t_0)$ can be conjugated to an integer matrix in $SL(n,\mathbb Z)$.
		
		\noindent In this case, a lattice is given by $\Gamma=t_0\mathbb{Z}\ltimes P^{-1}\mathbb{Z}^n$, where $P\phi(t_0)P^{-1}$ is an integer matrix.
	\end{prop}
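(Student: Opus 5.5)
The plan is to prove the two implications separately, writing elements of $G=\R\ltimes_\phi\R^n$ as pairs $(t,v)$ with product $(t,v)(s,w)=(t+s,\,v+\phi(t)w)$.

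\emph{Sufficiency.} Suppose $A:=P\phi(t_0)P^{-1}\in SL(n,\mathbb Z)$ and set $L:=P^{-1}\mathbb Z^n$ and $\Gamma:=t_0\mathbb Z\ltimes L$. Since $A$ and $A^{-1}$ are integer matrices, $\phi(kt_0)L=L$ for all $k\in\mathbb Z$. From the product formula it then follows that $\Gamma$ is closed under products and inverses, so $\Gamma$ is a subgroup. It is discrete because it is a discrete subset of $\R\times\R^n$. It is cocompact because the compact set $[0,t_0]\times P^{-1}[0,1]^n$ meets every coset $g\Gamma$. To see this, first multiply on the right by a power of $(t_0,0)$ to bring the $\R$-coordinate into $[0,t_0]$. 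Then multiply on the right by an element of $L$; this translates the $\R^n$-coordinate by $\phi(t)L$, and a fundamental domain can be chosen for that lattice. Hence $\Gamma$ is a lattice of the stated form.

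\emph{Necessity.} Let $\Gamma\subset G$ be a lattice and split into two cases according to the nilradical.

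If $G$ is nilpotent, then $M=\mathrm{ad}_x|_{\R^n}$ is nilpotent, so $\phi(t_0)=e^{t_0M}$ is unipotent for any $t_0\neq0$. A unipotent matrix is conjugate to its Jordan form, which consists of Jordan blocks with eigenvalue $1$. That matrix lies in $SL(n,\mathbb Z)$, so the condition holds.

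If $G$ is not nilpotent, then its nilradical $N$ is a proper connected nilpotent normal subgroup containing $\R^n$. Since $\R^n$ has codimension one, $N=\R^n$. I would invoke Mostow's theorem: for a lattice in a simply connected solvable Lie group, $\Gamma\cap N$ is a lattice in the nilradical $N$. Thus $L:=\Gamma\cap\R^n$ is a lattice in $\R^n$, and the image of $\Gamma$ in $G/\R^n\cong\R$ is a lattice $t_0\mathbb Z$ with $t_0\neq0$.

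Next pick $\gamma=(t_0,v)\in\Gamma$. Conjugation by $\gamma$ preserves $L$, since $\Gamma$ is a group and $\R^n$ is normal. On $\R^n$ it acts by $w\mapsto v+\phi(t_0)w-v=\phi(t_0)w$, because $\R^n$ is abelian. Therefore $\phi(t_0)L=L$. Writing $L=P^{-1}\mathbb Z^n$, we get $P\phi(t_0)P^{-1}\in GL(n,\mathbb Z)$. Finally, $\det\phi(t_0)=e^{t_0\tr M}=1$ by unimodularity, so this matrix lies in $SL(n,\mathbb Z)$. The same argument shows $\Gamma=t_0\mathbb Z\ltimes L$ up to the choice of the section $v$, which matches the description in the statement.

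The main obstacle is the step $\Gamma\cap\R^n$ being a lattice in $\R^n$. Here I rely on Mostow's nilradical theorem. This is why I treat the nilpotent case separately, where $\R^n$ need not be a rational subgroup but the conclusion is trivial.
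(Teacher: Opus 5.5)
The paper gives no proof of this statement; it is quoted from Bock. Your argument is the standard one. Sufficiency is a direct verification. For necessity you use unipotence of $\phi(t_0)$ when $G$ is nilpotent, and otherwise Mostow's theorem that $\Gamma$ meets the nilradical $\R^n$ in a lattice $L$, followed by $\phi(t_0)L=L$ and $\det\phi(t_0)=1$. This structure is sound, including your reason for splitting off the nilpotent case. However, one explicit claim is false.

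\emph{The compact set in the sufficiency part is wrong.} Take a right coset $g\Gamma$ and assume $t_0>0$. Once the first coordinate has been moved to $s\in[0,t_0]$, right multiplication by $(0,w)$ with $w\in L$ translates the $\R^n$-coordinate by $\phi(s)w$. So you need a fundamental domain of $\phi(s)L$, namely $\phi(s)P^{-1}[0,1]^n$, not $P^{-1}[0,1]^n$.

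The box genuinely fails. Let $\phi(t)$ be the rotation of $\R^2$ by angle $2\pi t$, with $t_0=1$, $L=\mathbb Z^2$ and $s=1/8$. The unit square overlaps its translate by $\phi(1/8)(1,0)=(1/\sqrt2,1/\sqrt2)$ in positive area. Since its area equals the covolume of $\phi(1/8)\mathbb Z^2$, it cannot meet every coset of that lattice. Hence some coset $(1/8,v)\Gamma$ misses $[0,1]\times[0,1]^2$.

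The repair is immediate. Either take $K=\{(s,\phi(s)x): s\in[0,t_0],\ x\in P^{-1}[0,1]^n\}$, which is compact as a continuous image. Or work with $\Gamma\backslash G$ instead: there $(kt_0,w)(t,v)=(t+kt_0,\,w+\phi(kt_0)v)$ translates by $L$ itself, and your box works verbatim.

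\emph{One step in the necessity part needs a line of justification.} You assert that the image of $\Gamma$ in $G/\R^n\cong\R$ is $t_0\mathbb Z$ with $t_0\neq0$. This is standard: if a closed normal subgroup $H$ meets $\Gamma$ in a lattice, then $\Gamma H$ is closed. It is also easy to see directly:
\begin{itemize}
\item Left multiplication by elements of $L$ moves any $(s,v)\in\Gamma$ to some $(s,v')\in\Gamma$ with $v'\in P^{-1}[0,1]^n$.
\item So if first coordinates of elements of $\Gamma$ accumulated at $0$, infinitely many distinct elements of $\Gamma$ would lie in a compact set, contradicting discreteness.
\item The image is nonzero. Otherwise $\Gamma\subset\R^n$, and $G/\Gamma$ would surject equivariantly onto $G/\R^n\cong\R$, pushing the finite invariant measure to a finite translation-invariant measure on $\R$.
\end{itemize}

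Finally, your closing remark that $\Gamma=t_0\mathbb Z\ltimes L$ ``up to the choice of the section'' is imprecise and not needed. The last sentence of the statement only asserts that $t_0\mathbb Z\ltimes P^{-1}\mathbb Z^n$ is a lattice, which your sufficiency argument already provides.
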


	\section{On flat coKähler Lie algebras}
	
	It was proved by Fino and Vezzoni that every unimodular Lie algebra with a coKähler structure must be flat, and hence solvable.
	
	\begin{prop}\label{cokahler unimod}\cite{FV}
		If a Lie algebra $\mathfrak{g}$ admits a coKähler structure and is unimodular, then it is necessarily flat and solvable.
	\end{prop}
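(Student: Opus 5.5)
Via Theorem~\ref{F-V}, I would write $\g=\R\xi\ltimes_D\h$, where $(\h,J,\prodint)$ is a $2n$-dimensional Kähler Lie algebra and $D$ is a skew-symmetric derivation of $\h$ commuting with $J$. Since $\xi\perp\h$ and $\ad_\xi|_\h=D$ is skew-symmetric, $\tr(\ad_\xi)=0$. For $y\in\h$, $\ad_y$ maps $\xi$ to $-Dy\in\h$ and acts on $\h$ as $\ad^{\h}_y$, so $\tr_\g(\ad_y)=\tr_\h(\ad^\h_y)$. Hence $\g$ is unimodular if and only if $\h$ is unimodular. The first step is therefore to reduce to the known fact that a unimodular Kähler Lie algebra is flat (Hano's theorem, also used by Fino--Vezzoni). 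This gives that $(\h,\prodint)$ is flat.

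Next I would show that flatness lifts from $\h$ to $\g$. Using the Koszul formula with $\xi\perp\h$, $\ad_\xi|_\h=D$ skew, and $\h$ an ideal, I would compute the Levi-Civita connection of $\g$:
\begin{itemize}
\item $\nabla^\g_\xi=D$ on $\h$;
\item $\nabla^\g_\xi\xi=0$;
\item $\nabla^\g_X\xi=0$ for $X\in\h$;
\item $\nabla^\g_XY=\nabla^\h_XY$ for $X,Y\in\h$, because $\<[X,Y],\xi\>=0$ and $\<[Y,\xi],X\>+\<[\xi,X],Y\>=\<DX,Y\>-\<DY,X\>$, which vanishes by skew-symmetry of $D$ after sorting signs.
\end{itemize}
So $\g$ is locally a Riemannian product-like extension. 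Then $R^\g(X,Y)=R^\h(X,Y)=0$ on $\h$, and $R^\g(\cdot,\cdot)\xi=0$ since $\nabla\xi=0$. The remaining term is $R^\g(\xi,X)Y=D\nabla^\h_XY-\nabla^\h_X DY-\nabla^\h_{DX}Y$. This vanishes because a skew-symmetric derivation is an infinitesimal isometric automorphism: $e^{tD}$ preserves bracket and metric, hence preserves $\nabla^\h$, and differentiating gives exactly $[D,\nabla^\h_X]=\nabla^\h_{DX}$. Therefore $\g$ is flat, and solvability follows from the Observation after the Barberis--Dotti--Fino proposition.

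The main obstacle is the Kähler step, namely that unimodular Kähler implies flat, if it cannot be cited. To prove it directly I would follow Hano: the Kähler form $\omega$ gives a left-symmetric (symplectic) product, and unimodularity plus $\nabla J=0$ forces the Ricci form (computed as $\rho(X,Y)=-\tfrac12\tr(J\ad_{[X,Y]})$ type formulas, i.e.\ Koszul's formula for the Ricci form of a Kähler Lie algebra) to vanish. Then Ricci-flat homogeneous metrics are flat (Alekseevskii--Kimelfeld). Both ingredients are standard, so I would cite them rather than re-derive them.
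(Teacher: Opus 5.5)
The paper does not prove this proposition; it quotes it from Fino--Vezzoni, so there is no internal argument to compare against. Your proof is correct and complete, modulo Hano's theorem that unimodular K\"ahler Lie algebras are flat, which is a legitimate citation here. The individual steps all check out:
\begin{itemize}
\item the trace computation showing that $\g$ is unimodular exactly when $\h$ is;
\item the formulas $\nabla_\xi|_{\h}=D$, $\nabla\xi=0$ and $\nabla^{\g}_XY=\nabla^{\h}_XY$;
\item the identity $D\nabla^{\h}_XY=\nabla^{\h}_{DX}Y+\nabla^{\h}_XDY$, obtained by differentiating the isometric automorphisms $e^{tD}$.
\end{itemize}
Your curvature computation never uses flatness of $\h$. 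So it actually shows that $R^{\g}$ is $R^{\h}$ extended by zero: a coK\"ahler Lie algebra is flat if and only if its K\"ahler ideal $\h$ is. Incidentally, this also justifies the unproved assertion, in the proof of Theorem~\ref{thm:principal}, that $\h$ inherits flatness from $\g$.

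Two minor points.

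First, a sign slip in the Koszul step. The relevant combination is $-\langle[Y,\xi],X\rangle+\langle[\xi,X],Y\rangle=\langle DY,X\rangle+\langle DX,Y\rangle$, and this vanishes by skew-symmetry. The expression $\langle DX,Y\rangle-\langle DY,X\rangle$ that you wrote equals $2\langle DX,Y\rangle$, which does not vanish.

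Second, your fallback sketch of Hano's theorem is not a proof. For a unimodular K\"ahler Lie algebra, Koszul's formula only reduces the Ricci form to a multiple of $\tr(J\ad_{[X,Y]})$. Showing that this term vanishes is the real content of Hano's theorem, and it uses the K\"ahler condition in an essential way; unimodularity plus integrability of $J$ are not enough. For example, take $\mathfrak{su}(2)\times\R$ with $[e_1,e_2]=e_3$ and cyclic brackets, $e_4$ central, and the Hopf complex structure $Je_4=e_1$, $Je_2=e_3$. Here $e_1=[e_2,e_3]$ and $\tr(J\ad_{e_1})=-2$. So rely on the citation, as you said you would.
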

	
	This naturally leads to the question of whether the converse of Proposition \ref{cokahler unimod} is true. 
	Since coKähler Lie algebras are in correspondence with Kähler Lie algebras (see Theorem \ref{F-V}), it is natural first to ask whether every even-dimensional flat Lie algebra admits a Kähler structure. This was answered affirmatively by Barberis, Dotti and Fino as a consequence of the following characterization of flat Lie algebras.
	
	\begin{prop}  \label{Flat BDF} \cite{BDF}
		Let $(\mathfrak{g},\langle\cdot,\cdot\rangle)$ be a flat Lie algebra with 
		$\dim(\mathfrak{g}^{1}) = 2m$ and $\dim(\mathfrak{z}(\mathfrak{g})) = s$. 
		Then there exists a matrix $\theta = (\theta_{\beta}^{\alpha}) \in M(k,m;k)$ \footnote{Here $M(k,m;k)$ denotes the set of real $k\times m$ matrices of rank $k$. We write $\theta_\beta=(\theta_\beta^1,\dots,\theta_\beta^k)\in\mathbb R^k$ for the $\beta$-th column of $\theta$.}
		such that $\theta_{\beta} \neq 0$ for each $1 \leq \beta \leq m$, and $\mathfrak{g}$
		admits an orthogonal decomposition
		\[
		\mathfrak{g} \cong \mathbb{R}^{s} \times \bigl(\mathbb{R}^{k} \ltimes_{\rho_{\theta}} \mathbb{R}^{2m}\bigr),
		\]
		where $\{e_{1},\dots,e_{k},f_{1},\dots,f_{2m}\}$ is an orthonormal basis of
		$\mathbb{R}^{k} \ltimes_{\rho_{\theta}} \mathbb{R}^{2m}$, and $T \in \mathbb{R}^{k}$
		acts on $\mathbb{R}^{2m}$ as follows:
		\[
		\rho_{\theta}(T) =
		\begin{pmatrix}
			0 & -\langle T, \theta_1 \rangle & & & \\
			\langle T, \theta_1 \rangle & 0 & & & \\
			& & \ddots & & \\
			& & & 0 & -\langle T, \theta_m \rangle \\
			& & & \langle T, \theta_m \rangle & 0
		\end{pmatrix},
		\]
		where $\langle\cdot,\cdot\rangle$ denotes the Euclidean inner product on $\mathbb{R}^{k}$.
	\end{prop}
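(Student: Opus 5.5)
The plan is to derive this normal form directly from the orthogonal decomposition $\g=\mathfrak{z}(\g)\oplus\h\oplus\g^{1}$ in the previous proposition of \cite{BDF}. Set $k=\dim\h$, $s=\dim\mathfrak{z}(\g)$ and $2m=\dim\g^{1}$.

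\textbf{Step 1: the bracket splits.} We have $[\mathfrak{z}(\g),\g]=0$, $[\h,\h]=0$ and $[\g^{1},\g^{1}]=0$, because $\h$ and $\g^{1}$ are abelian. Since $\g^{1}$ is an ideal, $[\h,\g^{1}]\subset\g^{1}$. Hence the only nonzero brackets are those between $\h$ and $\g^{1}$. It follows that $\g\cong\mathfrak{z}(\g)\times(\h\ltimes_{\ad}\g^{1})$ as Lie algebras, and the decomposition is orthogonal. Identifying $\mathfrak{z}(\g)\cong\R^{s}$, it remains to put the representation $\ad:\h\to\mathfrak{so}(\g^{1})$ in normal form.

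\textbf{Step 2: simultaneous normal form.} Since $\h$ is abelian, $\{\ad_X|_{\g^{1}}:X\in\h\}$ is a commuting family of skew-symmetric endomorphisms of the Euclidean space $\g^{1}$. I would invoke the standard simultaneous normal form for such a family. After complexifying, these are commuting normal operators, so they are simultaneously unitarily diagonalizable. Pairing conjugate eigenvectors then gives an orthonormal basis $f_1,\dots,f_{2m}$ of $\g^{1}$ and linear functionals $\lambda_1,\dots,\lambda_m\in\h^{*}$ such that each plane $\mathrm{span}\{f_{2\beta-1},f_{2\beta}\}$ is $\ad(\h)$-invariant. On that plane $\ad_X$ acts by $\begin{pmatrix}0&-\lambda_\beta(X)\\ \lambda_\beta(X)&0\end{pmatrix}$, where $f_{2\beta-1}\leftrightarrow f_{2\beta}$ may be swapped to fix the sign.

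This step is the main technical point, because the joint kernel must be handled carefully: it could a priori be odd-dimensional. That is exactly what Step 3 excludes.

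\textbf{Step 3: no zero blocks.} Let $V=\{v\in\g^{1}:[\h,v]=0\}$. Any $v\in V$ also commutes with $\g^{1}$ (which is abelian) and with $\mathfrak{z}(\g)$. So $v\in\mathfrak{z}(\g)\cap\g^{1}=0$ by orthogonality of the decomposition. Hence $V=0$. Consequently every $\lambda_\beta$ is nonzero, and no trivial $1$-dimensional blocks occur, which is consistent with $\g^{1}$ being even-dimensional.

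\textbf{Step 4: translate to $\theta$.} Choose an orthonormal basis $e_1,\dots,e_k$ of $\h$ and identify $\h\cong\R^{k}$ isometrically. By Riesz there is a unique $\theta_\beta\in\R^{k}$ with $\lambda_\beta(T)=\langle T,\theta_\beta\rangle$. Let $\theta$ be the $k\times m$ matrix with columns $\theta_\beta$; then $\ad_T|_{\g^{1}}=\rho_\theta(T)$ and each $\theta_\beta\neq0$. Finally, $\rho_\theta(T)=0$ if and only if $T\perp\theta_\beta$ for all $\beta$. Injectivity of $\ad:\h\to\mathfrak{so}(\g^{1})$ therefore says that the columns $\theta_\beta$ span $\R^{k}$, i.e.\ $\theta$ has rank $k$, so $\theta\in M(k,m;k)$.

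Together with Step 1, this gives the orthogonal isomorphism $\g\cong\R^{s}\times(\R^{k}\ltimes_{\rho_\theta}\R^{2m})$ with $\{e_1,\dots,e_k,f_1,\dots,f_{2m}\}$ orthonormal, as claimed.
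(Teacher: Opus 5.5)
Your proof is correct. In Steps 1--4 you derive the normal form from the preceding structural proposition. You simultaneously block-diagonalize the commuting skew-symmetric family $\ad(\h)|_{\g^{1}}$ and kill the joint kernel using $\g^{1}\cap\mathfrak{z}(\g)=0$, which gives $\theta_\beta\neq 0$ and also recovers that $\dim\g^{1}$ is even. You then turn injectivity of $\ad|_{\h}$ into $\theta$ having rank $k$.

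The paper gives no proof of this statement; it simply cites \cite{BDF}. Your derivation is essentially the standard argument behind that citation. It shows that the geometric content (flatness) lies entirely in the earlier decomposition $\g=\mathfrak{z}(\g)\oplus\h\oplus\g^{1}$ with properties (i) and (ii), and that this normal form is linear algebra on top of it. Skew-symmetry of $\ad_X|_{\g^{1}}$ is part of (i), or follows from (ii) since each $\nabla_X$ is skew-symmetric.
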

	\begin{coro} \label{kahler flat}\cite{BDF}
		Every flat Lie algebra of even dimension is Kähler.
	\end{coro}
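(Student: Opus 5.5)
We will prove Corollary \ref{kahler flat} directly from the normal form in Proposition \ref{Flat BDF}, by writing down an explicit Hermitian structure and checking that it is Kähler. Let $\g$ be flat of even dimension, so $\g\cong\R^s\times(\R^k\ltimes_{\rho_\theta}\R^{2m})$. Since $s+k+2m$ is even, $s+k$ is even. The complex structure will be split into two pieces. On $\R^{2m}$ we take $J f_{2j-1}=f_{2j}$ and $J f_{2j}=-f_{2j-1}$, so $J$ preserves each plane $\mathrm{span}\{f_{2j-1},f_{2j}\}$. On the abelian subalgebra $\R^s\oplus\R^k$, which has even dimension and is orthogonal to $\R^{2m}$, we take any orthogonal complex structure $J_0$ for the restricted inner product. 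The metric is the given one, with $\{e_i,f_j\}$ orthonormal together with an orthonormal basis of $\R^s$. Then $J$ is orthogonal by construction.

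The first step is integrability, $N_J=0$. Set $\mathfrak a=\R^s\oplus\R^k$. Every bracket lands in $\R^{2m}$, and three observations do the work:
\begin{itemize}
\item $[\mathfrak a,\mathfrak a]=0$ and $[\R^{2m},\R^{2m}]=0$;
\item for $T\in\mathfrak a$, $\rho_\theta(T)$ is block diagonal with $2\times2$ rotation blocks, so it commutes with $J|_{\R^{2m}}$;
\item both $\mathfrak a$ and $\R^{2m}$ are $J$-invariant.
\end{itemize}
For $X,Y\in\mathfrak a$ every term of $N_J(X,Y)$ is a bracket inside $\mathfrak a$ and hence vanishes. For $X,Y\in\R^{2m}$ every term vanishes for the same reason. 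For the mixed case $X=T\in\mathfrak a$, $Y=v\in\R^{2m}$, write $\rho=\rho_\theta$ (extended by zero on $\R^s$). We get
\[
N_J(T,v)=\rho(JT)Jv-J\rho(JT)v-J\rho(T)Jv-\rho(T)v .
\]
Commuting $J$ past $\rho$ turns this into $\rho(JT)Jv-\rho(JT)Jv+\rho(T)v-\rho(T)v=0$.

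The second step is closedness of $\omega=\<J\cdot,\cdot\>$. We use
\[
d\omega(X,Y,Z)=-\omega([X,Y],Z)-\omega([Y,Z],X)-\omega([Z,X],Y).
\]
The only nonzero brackets have the form $[T,v]=\rho(T)v$ with $T\in\mathfrak a$, $v\in\R^{2m}$. A term $\omega([\cdot,\cdot],\cdot)$ pairs an element of $\R^{2m}$ with the remaining argument. Since $\mathfrak a$ is $J$-invariant and orthogonal to $\R^{2m}$, the only surviving terms are those where the remaining argument lies in $\R^{2m}$, that is, triples of type $(T,v,w)$ with $v,w\in\R^{2m}$. For such a triple
\[
d\omega(T,v,w)=-\omega(\rho(T)v,w)+\omega(\rho(T)w,v).
\]
This vanishes because $\rho(T)$ is skew-symmetric and commutes with $J$, so $\omega(\rho(T)v,w)=\<J\rho(T)v,w\>$ is symmetric in $v$ and $w$. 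Triples with two entries in $\mathfrak a$ give brackets that are zero or land in $\R^{2m}$ paired against $\mathfrak a$, hence vanish. Thus $(\g,J,\prodint)$ is Kähler.

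The only delicate points are these:
\begin{itemize}
\item the parity bookkeeping: $s+k$ is even, so $J_0$ exists, and no mixing between $\mathfrak a$ and $\R^{2m}$ is needed;
\item the case $s+k=0$, where $\g$ is abelian and trivially Kähler.
\end{itemize}
I expect no real obstacle; the crux is the commutation $[\rho_\theta(T),J]=0$, which is immediate from the block form in Proposition \ref{Flat BDF}.
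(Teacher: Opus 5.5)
Your proposal is correct and follows the paper's proof: you use the same complex structure (an arbitrary orthogonal $J$ on $\R^s\oplus\R^k$ together with $Jf_{2i-1}=f_{2i}$) and the same integrability computation based on $[\rho_\theta(T),J]=0$. The only difference is that you verify $d\omega=0$ directly, using that $J\rho_\theta(T)$ is symmetric as a product of commuting skew-symmetric maps, whereas the paper checks $\nabla J=0$ via $\nabla_X=\rho_\theta(X)$. Your route has the small advantage of not needing the Levi-Civita connection at all, including the case $X\in\R^{2m}$, which the paper leaves implicit.
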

	
	\begin{proof}
		Let $\mathfrak{g} = \mathbb{R}^{s} \times (\mathbb{R}^{k} \ltimes_{\rho_{\theta}} \mathbb{R}^{2m})$
		be a Lie algebra as in Proposition~\ref{Flat BDF} (with $s+k$ even).
		Fix an orthonormal basis
		\[
		\mathcal{B}=\{e_{1},\dots,e_{s},e_{s+1},\dots,e_{s+k},f_{1},\dots,f_{2m}\}
		\]
		of $\mathfrak{g}$.
		
		Let $J$ be the orthogonal endomorphism leaving $\mathbb{R}^{s}\times\mathbb{R}^{k}$ invariant
		and such that $J^{2}=-\mathrm{Id}$, and
		\[
		Jf_{2i-1}=f_{2i}, \qquad i=1,\dots,m.
		\]
		With respect to the basis $\mathcal{B}$, the endomorphism $J$ is given by
		\[
		J=
		\left(
		\begin{array}{c|ccc}
			B &  & 0 \\ \hline
			0 & \begin{array}{cc|}
				0 & -1  \\
				1 & 0   \\\hline
			\end{array}\\
			&  &\ddots \\
			& &  &\begin{array}{|cc}
				\hline
				0 & -1 \\
				1 & 0
			\end{array}
		\end{array}
		\right),
		\]
		where $B$ is an $(s+k)\times(s+k)$ block satisfying $B^{2}=-\mathrm{Id}$.
		
		Recall that, with respect to the basis $\mathcal{B}$,
		\[
		\rho_{\theta}(T)=
		\left(
		\begin{array}{c|c}
			0_{s+k} & \\ \hline
			& \begin{array}{cccc}
				0 & -\langle T, \theta_1 \rangle & &  \\
				\langle T, \theta_1 \rangle & 0 & &  \\
				& & \ddots &  \\
				& &  0 & -\langle T, \theta_m \rangle \\
				& &  \langle T, \theta_m \rangle & 0
			\end{array}
		\end{array}
		\right).
		\]
		Observe that $\rho_{\theta}(T)J = J\rho_{\theta}(T)$ for all $T \in \mathbb{R}^{k}$.
		
		We now show that $J$ is integrable.
		If $X,Y \in \mathbb{R}^{s}$, $\mathbb{R}^{k}$, or $\mathbb{R}^{2m}$, or if
		$X \in \mathbb{R}^{s}$ and $Y \in \mathbb{R}^{k}$, then
		\[
		N_{J}(X,Y)=0.
		\]
		Therefore, it remains to check $N_{J}(e_{j},f_{2i-1})$ and $N_{J}(e_{j},f_{2i})$.
		We compute
		\begin{align*}
			N_{J}(e_{j},f_{2i-1})
			&=[Je_{j},Jf_{2i-1}]
			-J[Je_{j},f_{2i-1}]
			-J[e_{j},Jf_{2i-1}]
			-[e_{j},f_{2i-1}] \\
			([X,Y]=\rho_{\theta}(X)Y)\ \Rightarrow\ &=
			\rho_{\theta}(Je_{j})Jf_{2i-1}
			-J\rho_{\theta}(Je_{j})f_{2i-1}
			-J\rho_{\theta}(e_{j})f_{2i}
			-\rho_{\theta}(e_{j})(-Jf_{2i}) \\
			(\rho_{\theta}(T)J=J\rho_{\theta}(T))\ \Rightarrow\ &=0.
		\end{align*}
		Similarly, one checks that $N_{J}(e_{j},f_{2i})=0$, hence $J$ is integrable.
		
		We now show that $\nabla J=0$.
		Recall that $\nabla_{X}=\rho_{\theta}(X)$ for all $X\in\mathbb{R}^{k}$, and
		$\nabla_{X}=0$ for all $X\in\mathbb{R}^{s}$.
		Thus,
		\[
		\nabla J(X,Y)=0,
		\quad \text{for } X,Y\in \mathbb{R}^{k},\mathbb{R}^{s},\mathbb{R}^{2m},
		\ \text{and for } X\in\mathbb{R}^{s},\ Y\in\mathbb{R}^{k}.
		\]
		If $X\in\mathbb{R}^{k}$ and $Y\in\mathbb{R}^{2m}$, then
		\begin{align*}
			\nabla J(X,Y)
			&=\nabla_{X}(JY)-J\nabla_{X}Y \\
			&=\rho_{\theta}(X)JY-J\rho_{\theta}(X)Y \\
			&=0.
		\end{align*}
		Therefore, $(\mathfrak{g},J,g)$ is a flat Kähler Lie algebra.
	\end{proof}
	
	We are now ready to prove the next theorem, which gives not only the converse of Proposition \ref{F-V} but also the precise analogue of the even-dimensional Kähler case proved in Corollary \ref{kahler flat}.

	\begin{teo} \label{thm:principal}
		Let $(\mathfrak{g},\langle\cdot,\cdot\rangle)$ be a flat Lie algebra of odd dimension.
		Then $\mathfrak{g}$ admits a coKähler structure.
	\end{teo}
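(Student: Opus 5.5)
We will use the Fino--Vezzoni correspondence (Theorem \ref{F-V}). The idea is to realize the odd-dimensional flat Lie algebra $\g$ as $\R\xi\ltimes_D\h$, where $\h$ is an even-dimensional flat Lie algebra carrying the Kähler structure of Corollary \ref{kahler flat}, and $D$ is a skew-symmetric derivation of $\h$ commuting with $J$.

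By Proposition \ref{Flat BDF} we write
\[
\g\cong\R^s\times\bigl(\R^k\ltimes_{\rho_\theta}\R^{2m}\bigr),
\]
with $s+k$ odd and orthonormal basis $\{e_1,\dots,e_{s+k},f_1,\dots,f_{2m}\}$. We then single out one vector $\xi$ from $\R^s\oplus\R^k$.

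\textbf{Case $s\ge1$.} Take $\xi=e_1\in\mathfrak z(\g)$ and put
\[
\h=\R^{s-1}\times\bigl(\R^k\ltimes_{\rho_\theta}\R^{2m}\bigr).
\]
Then $\g=\R\xi\times\h$ and $D=0$. The ideal $\h$ is flat, being of the same form as in Proposition \ref{Flat BDF} (or simply a flat subalgebra with the induced metric, since $\xi$ is central and orthogonal). It is also even-dimensional, so the construction in the proof of Corollary \ref{kahler flat} gives it a Kähler structure. Theorem \ref{F-V} with $D=0$ then yields the coKähler structure.

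\textbf{Case $s=0$.} Now $k$ is odd. Since $\theta$ has rank $k$, we cannot simply drop a direction of $\R^k$ and keep a nondegenerate action. Instead we take $\xi=e_1\in\R^k$ and
\[
\h=\mathrm{span}\{e_2,\dots,e_k\}\ltimes\R^{2m}.
\]
This $\h$ is an ideal, because $[\g,\g]\subset\R^{2m}$ and $\R^k$ is abelian. It is flat: it has the same structure with the matrix $\theta$ restricted to the coordinates $2,\dots,k$, after discarding the columns that vanish and moving the corresponding $f$'s into the centre. Alternatively, one can check directly that the connection formulas $\nabla_X=\rho_\theta(X)$ still describe its Levi-Civita connection, so its curvature vanishes. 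The dimension of $\h$ is $k-1+2m$, which is even.

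Next, on $\h$ we define $J$ as in Corollary \ref{kahler flat}: an orthogonal complex structure $B$ on $\mathrm{span}\{e_2,\dots,e_k\}$ and $Jf_{2i-1}=f_{2i}$ on $\R^{2m}$. The proof of that corollary shows $J$ is integrable and $\nabla J=0$. One point needs care: $\h$ may have a centre, coming from $f$'s on which $e_2,\dots,e_k$ act trivially. The computation does not depend on that rank condition, however, since it only uses that $\rho_\theta(T)$ is block-diagonal with $2\times2$ rotation blocks commuting with $J$.

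Set $D=\mathrm{ad}_\xi|_\h=\rho_\theta(e_1)$. This is a derivation of $\h$: it vanishes on $\R^{k-1}$, preserves the abelian ideal $\R^{2m}$, and commutes with every $\rho_\theta(T)$, since all of them are simultaneously block-diagonal with rotation blocks. It is skew-symmetric and commutes with $J$, again because its blocks are rotations in the planes $\{f_{2i-1},f_{2i}\}$. Moreover $\R\xi\ltimes_D\h$ reproduces exactly the bracket of $\g$. Theorem \ref{F-V} then gives the coKähler structure.

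The main obstacle is the verification in the case $s=0$ that $\h$ with the restricted structure is Kähler, because the rank hypothesis of Proposition \ref{Flat BDF} may fail for the restricted $\theta$. I would handle this by redoing the Nijenhuis and $\nabla J$ computations of Corollary \ref{kahler flat} directly in this setting, rather than invoking the corollary.
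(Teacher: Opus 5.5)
Your argument is correct and is essentially the paper's proof: write $\g$ via Proposition~\ref{Flat BDF}, split off one direction $\xi$, give the even-dimensional complement $\h$ the Kähler structure of Corollary~\ref{kahler flat} (rotation blocks on $\R^{2m}$), and take $D=\ad_\xi|_\h$. The only deviation is that for $s\ge 1$ you take $\xi$ central so that $D=0$, whereas the paper always takes $\xi=e_{s+j}\in\R^k$.

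Redoing the Nijenhuis and $\nabla J$ computations on $\h$, rather than quoting the corollary, is sound and more careful than the paper. However, the hypothesis that can fail for the restricted $\theta$ is not the rank condition, since deleting a row of a full-row-rank matrix keeps full row rank. What can fail is the non-vanishing of its columns, and this is what produces the extra centre.
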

	
	\begin{proof}
		Since $(\mathfrak{g},\langle\cdot,\cdot\rangle)$ is flat, by Proposition~\ref{Flat BDF} we have
		\[
		\mathfrak{g}\cong \mathbb{R}^{s}\times(\mathbb{R}^{k}\ltimes_{\rho_{\theta}}\mathbb{R}^{2m}),
		\]
		where $s=\dim(\mathfrak{z}(\mathfrak{g}))$ and $2m=\dim(\mathfrak{g}^{1})$.
		Therefore, $s+k$ is odd.
		
		Fix an orthonormal basis
		\[
		\{e_{1},\dots,e_{s},e_{s+1},\dots,e_{s+k},f_{1},\dots,f_{2m}\}
		\]
		of $\mathfrak{g}$.
		The only non-vanishing brackets are given by
		\[
		[X,Y]=\rho_{\theta}(X)Y,
		\qquad X\in\mathbb{R}^{k},\ Y\in\mathbb{R}^{2m}.
		\]
		Moreover, the family $\{\rho_{\theta}(e_i)\}_{i=s+1,\dots,s+k}$ consists of skew-symmetric
		$2m\times2m$ matrices, and we denote $A_i:=\rho_{\theta}(e_i)$, then it can be written by
		\[
		A_i=
		\left(
		\begin{array}{ccccc}
			0 & -a^{i}_{1} & & & \\
			a^{i}_{1} & 0 & & & \\
			& & \ddots  & & \\
			& & & 0 & -a^{i}_{m} \\
			& & & a^{i}_{m} & 0
		\end{array}
		\right), \text{for $s+1\leq i\leq s+k$ and for some $a^i_j\in\R$}.
		\]
		\noindent Fix $j\in\{1,\dots,k\}$,
		then $\mathfrak{g}$ decomposes as
		\[
		\mathfrak{g}\cong \mathbb{R}e_{s+j}\ltimes_{D}
		\bigl(\mathbb{R}^{s}\times(\mathbb{R}^{k-1}\ltimes_{\rho_{\theta}}\mathbb{R}^{2m})\bigr),
		\]
		where
		\[
		D=
		\left(
		\begin{array}{c|c|c}
			0_{s} & \\ \hline
			& 0_{k-1} \\ \hline
			& & A_{i+j}
		\end{array}
		\right).
		\]
		
		\noindent Define next an even-dimensional Lie algebra $\h$ by 
		\[
		\mathfrak{h}=\mathbb{R}^{s}\times(\mathbb{R}^{k-1}\ltimes_{\rho_{\theta}}\mathbb{R}^{2m}).
		\]
		This Lie algebra $\h$ with the restriction of the inner product on $\g$ is a flat Lie algebra. Hence, it follows from Corollary~\ref{kahler flat} that $\h$ admits a Kähler structure where its complex structure can be written as
		\[
		J=
		\left(
		\begin{array}{c|c}
			C & 0 \\ \hline
			0 & \begin{array}{ccccc}
				0 & -1 & & \\
				1 & 0 & & \\
				& & \ddots & \\
				& &  & 0 & -1 \\
				& &  & 1 & 0
			\end{array}
		\end{array}
		\right),
		\]
		where $C$ is an $(s+k-1)\times(s+k-1)$ matrix satisfying $C^{2}=-\mathrm{Id}$. Thus,
		\[
		\mathfrak{g}\cong \mathbb{R}e_{s+j}\ltimes_{D}\mathfrak{h},
		\]
		where $\mathfrak{h}$ is Kähler and $D$ is a skew-symmetric derivation commuting with $J$.
		Hence, by Theorem~\ref{F-V}, the Lie algebra $\mathfrak{g}$ admits a coKähler structure. Moreover, the coK\"ahler structure is determined by $(\xi=e_{s+j}, \h,D)$ with the Hermitian structure on $\h$ given by the natural restriction of $J$ and $\prodint$ to $\h$.
	\end{proof}
	
	\section{On almost abelian coK\"ahler Lie algebras}
	
	In this section we study coKähler structures on a special family of Lie algebras. As mentioned in the preliminaries, almost abelian Lie algebras form a natural class for the study of special geometric structures due to the simple description of their Lie bracket in terms of a single matrix.
	
	We begin by recalling the following characterization of almost abelian Lie algebras admitting a Kähler structure, which appears in \cite{Lauret-Will} (see also \cite{AO,ABDGH}).
	
	\begin{prop} \label{kahler casi abel} 
		A $2n$-dimensional almost abelian Lie algebra $\g= \R x \ltimes_{A} \mathfrak{a}$ admits a Kähler structure if and only if $A=\left(
		\begin{array}{c|c}
			a&0\cdots 0 \\ \hline
			0& \\
			\vdots & \tilde{A} \\0& \\
		\end{array}
		\right)$, where $a\geq 0 $ and $\tilde{A}\in \mathfrak{u}(n-1)$ in some basis of $\mathfrak a$.
	\end{prop}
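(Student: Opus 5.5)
We need to prove Proposition \ref{kahler casi abel}: the almost abelian Lie algebra $\g=\R x\ltimes_A\mathfrak a$ of dimension $2n$ admits a Kähler structure if and only if, in some basis of $\mathfrak a$, $A$ has the stated block form with $a\ge 0$ and $\tilde A\in\mathfrak u(n-1)$.

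\emph{Sufficiency.} This is the easy direction, done by an explicit construction. Take the basis $\{e_1,\dots,e_{2n-1}\}$ of $\mathfrak a$ in which $A$ has the stated form, and let $\tilde J$ be the complex structure on $\mathrm{span}\{e_2,\dots,e_{2n-1}\}$ for which $\tilde A\in\mathfrak u(n-1)$. Declare $\{x,e_1,\dots,e_{2n-1}\}$ orthonormal, set $Jx=e_1$, and let $J$ act as $\tilde J$ on the rest. Then $\g\cong \mathfrak{aff}(\R)\times \R^{2n-2}$ twisted by the skew derivation $\tilde A$; concretely, $\g=\R x\ltimes(\R e_1\oplus\R^{2n-2})$.
\begin{itemize}
\item Integrability: $N_J(x,e_1)=0$ because $[x,e_1]=ae_1$ and $[e_1,\cdot]=0$. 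The terms $N_J(x,v)$ and $N_J(e_1,v)$ for $v\in\R^{2n-2}$ vanish because $\tilde A$ commutes with $\tilde J$, exactly as in the Nijenhuis computation in Corollary \ref{kahler flat}.
\item Closedness of $\omega=x^*\wedge e_1^*+\tilde\omega$: we have $de_1^*=-a\,x^*\wedge e_1^*$, so $d(x^*\wedge e_1^*)=0$. Also $d\tilde\omega=-x^*\wedge\tilde A^*\cdot\tilde\omega$, which vanishes since $\tilde A$ is skew and commutes with $\tilde J$, hence preserves $\tilde\omega$.
\end{itemize}

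\emph{Necessity.} Fix a Kähler structure $(J,\prodint)$. First, $\g$ is solvable, so $\mathfrak a$ is an abelian ideal. Let $\mathfrak a'=\mathfrak a\cap J\mathfrak a$, which is $J$-invariant of dimension $2n-2$, and choose a unit vector $e_1\in\mathfrak a\ominus\mathfrak a'$. Replace $x$ by the unit vector $-Je_1$, which lies in $\mathfrak a^\perp$ up to adding elements of $\mathfrak a$. Write $A$ in the orthogonal decomposition $\mathfrak a=\R e_1\oplus\mathfrak a'$.

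Integrability applied to pairs $(x,v)$ and $(e_1,v)$ with $v\in\mathfrak a'$, using $[\mathfrak a,\mathfrak a]=0$ and $Jx=e_1$, gives $N_J(x,v)=J[x,Jv]-[x,v]-\dots$. From this one derives that $A$ commutes with $J$ on $\mathfrak a'$ modulo the $e_1$-component, and that the component of $A\mathfrak a'$ along $e_1$ vanishes, so $\mathfrak a'$ is $A$-invariant.

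Closedness of $\omega$ evaluated on $(x,u,v)$ with $u,v\in\mathfrak a'$ gives $\omega(Au,v)+\omega(u,Av)=0$. Evaluated on $(x,e_1,v)$ it gives that the $\mathfrak a'$-component of $Ae_1$ is zero, up to a term controlled by the previous step. Combining these, $\tilde A=A|_{\mathfrak a'}$ commutes with $J$ and preserves $\omega$, hence lies in $\mathfrak{sp}\cap\mathfrak{gl}(n-1,\mathbb C)$. This is the Hermitian-symmetric part, which is not yet skew.

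\emph{Main obstacle.} The key difficulty is upgrading $\tilde A$ from preserving $\omega$ to being skew with respect to the metric, and killing the off-diagonal entries. For this I would use the standard fact that a Kähler Lie algebra with this structure has $\tilde A$ normal with imaginary spectrum on $\mathfrak a'$. The plan is to split $\tilde A=S+K$ into symmetric and skew parts, both commuting with $J$. The $\omega$-condition makes $S$ anti-commute appropriately, which forces $S=0$ because $S$ is symmetric and $J$-linear, whence $\omega(Su,v)=\langle JSu,v\rangle$ is symmetric. Finally, compute the trace and use the sign convention (replace $x$ by $-x$) to obtain $a\ge 0$, and change basis to absorb any remaining off-diagonal term into the form of Lemma \ref{isomorf casi abel}.
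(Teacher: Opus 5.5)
The paper gives no proof of this proposition; it quotes it from \cite{Lauret-Will} (see also \cite{AO,ABDGH}), so your argument has to stand on its own. Its skeleton is the standard direct computation, and it is sound:
\begin{itemize}
\item $\mathfrak a'=\mathfrak a\cap J\mathfrak a$ is $J$-invariant of dimension $2n-2$.
\item $Je_1$ is exactly orthogonal to $\mathfrak a$, so $x:=-Je_1$ needs no correction, and $\mathrm{ad}_x|_{\mathfrak a}=cA$ for some $c\neq 0$.
\item For $v\in\mathfrak a'$ one has $N_J(x,v)=-J[x,Jv]-[x,v]=-(JAJ+A)v$; you wrote the first term with the wrong sign. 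Its vanishing gives both $A\mathfrak a'\subseteq\mathfrak a'$ and $[A|_{\mathfrak a'},J]=0$.
\item $d\omega(x,e_1,v)=0$ gives $Ae_1\in\R e_1$.
\end{itemize}
The sufficiency direction is also fine.

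The problem is your ``main obstacle'': it does not exist, and the argument you give to overcome it is mis-justified. For a compatible triple, $\mathfrak{sp}(\omega)\cap\mathfrak{gl}(J)$ is not a ``Hermitian-symmetric part''. If $\tilde A$ commutes with $J$, then $\omega(\tilde Au,v)+\omega(u,\tilde Av)=\langle \tilde AJu,v\rangle+\langle Ju,\tilde Av\rangle$. Setting $w=Ju$, this is $\langle\tilde Aw,v\rangle+\langle w,\tilde Av\rangle$. So closedness on $(x,u,v)$ already says $\tilde A$ is skew, i.e.\ $\tilde A\in\mathfrak u(n-1)$, and the proof ends there.

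Your splitting $\tilde A=S+K$ does reach $S=0$, but not for the reason you state. For $S$ symmetric and commuting with $J$, $JS$ is skew, so $\omega(S\cdot,\cdot)$ is skew-symmetric, not symmetric. The correct computation is that $K$ contributes $0$ and $S$ contributes $2\langle SJu,v\rangle$ to the $\omega$-condition, which forces $S=0$.

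Several other steps should be dropped:
\begin{itemize}
\item The appeal to a ``standard fact'' that $\tilde A$ is normal with imaginary spectrum essentially assumes the conclusion.
\item No trace computation is needed.
\item There are no off-diagonal entries left to absorb, since you already showed $Ae_1\in\R e_1$ and $A\mathfrak a'\subseteq\mathfrak a'$.
\end{itemize}

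Finally, say explicitly that you prove the normal form for $cA$, not for $A$ itself. The sign condition $a\geq 0$ is reached only after rescaling $x$. For instance, $A=\mathrm{diag}(-1,0,0)$ defines $\mathfrak{rr}_{3,0}$, which is K\"ahler, yet it is not conjugate to a matrix of the stated form with $a\geq 0$. So the statement must be read up to a nonzero multiple of $A$, which is harmless by Lemma \ref{isomorf casi abel}.
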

	
	We now characterize almost abelian Lie algebras admitting a coKähler structure. 
	Let $\g=\R e_0 \ltimes_{M} \u$ be an almost abelian Lie algebra of dimension $2n+1$. Assume that $\g$ admits a coKähler structure, namely $(\g,\Z,J,\alpha,\prodint)$. Equivalently, by Theorem \ref{F-V}, $\g$ decomposes as $\g=\R \Z \ltimes_{D} \h$ where $\h$ is a Kähler Lie algebra of dimension $2n$, and $D$ is a skew-symmetric derivation of $\h$ which commutes with the complex structure $J|_{\h}$.

Without loss of generality, we may assume that $|e_0|=1$ and $e_0 \perp \u$. Then we have two cases:
	
$\ri$ $\h=\u$: in this case $\h$ is abelian, hence it admits a Kähler structure. Since $\xi\perp\h$, we can take $\Z=e_0$. Thus $M=D$ must be skew-symmetric and commute with $J$. In particular, $\g$ is unimodular, hence $(\g,\prodint)$ is flat, and $\mathrm{ker}(M)$ may be trivial.
	
	$\rii$  $\h \neq \u$: in this case, we consider $W=\h \cap \u$, which is an abelian ideal of $\g$ of codimension $2$, that is, $\mathrm{dim}(W)=2n-1$. We denote by $\{w_1,\dots,w_{2n-1}\}$, an orthonormal basis of $W$, and we extend it to $\{u_0,w_1,\dots,w_{2n-1}\}$, and $\{h_0,w_1,\dots,w_{2n-1}\}$, orthonormal bases of $\u$ and $\h$ respectively. Then we have that
	\begin{align} 
		h_0&=\lambda_1e_0+\lambda_2 u_0, &\quad u_0&=\mu_1 \Z + \mu_2 h_0,\label{ecuacion1} \\
		\Z&=\delta_1 e_0 +\delta_2 u_0, &\quad e_0&=\sigma_1\Z+\sigma_2 h_0, \label{ecuacion2}
	\end{align}
	for some $\lambda_i$, $\mu_i$, $\delta_i$ and $\sigma_i$ where \[
	\lambda_1\neq 0, \quad \mu_1\neq0, \quad \delta_2 \neq 0, \quad \sigma_2\neq 0 \quad \text{(since $\h \neq \u$)}.
	\]
	Denoting $\lambda:=(\lambda_1,\lambda_2)$, $\delta:=(\delta_1,\delta_2)$, $\mu:=(\mu_1,\mu_2)$, $\sigma:=(\sigma_1,\sigma_2)$, we have that $\lambda \perp \delta$ and $\mu \perp \sigma$, since $h_0 \perp \Z $ and $u_0 \perp e_0$. In particular, $\delta=\pm(-\lambda_2,\lambda_1)$ and $\sigma=\pm(-\mu_2,\mu_1)$.

	\
	
	We can now state the first structural result.
	\begin{prop}\label{cokahler casi abeliana implica kahler casi abeliana}
		Let $\g=\R e_0 \ltimes_{M} \u$ be an almost abelian Lie algebra of dimension $2n+1$ admitting a coKähler structure $(\Z,\h,D)$. 
		Then $\h$ is abelian or an almost abelian Lie algebra.
	\end{prop}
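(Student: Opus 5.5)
We argue by the case split (i)/(ii) set up just before the statement. In case (i), $\h=\u$ is an abelian ideal of $\g$, so $\h$ is abelian and there is nothing more to prove. The substance is case (ii), $\h\neq\u$. There the plan is to exhibit an abelian ideal of $\h$ of codimension one; the natural candidate is $W=\h\cap\u$.

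First, $W$ is abelian, since it is contained in the abelian ideal $\u$. Second, $W$ has codimension one in $\h$: we have $\dim W=2n-1$, and $\h=\R h_0\oplus W$ by the choice of orthonormal bases. The key step is to check that $W$ is an ideal of $\h$, i.e.\ that $[h_0,W]\subseteq W$. I would argue directly, without coordinates. The subspace $W$ is an intersection of two ideals of $\g$: $\u$ is an ideal by hypothesis, and $\h$ is an ideal since $\g=\R\Z\ltimes_D\h$. Hence $W$ is an ideal of $\g$, and in particular $[\h,W]\subseteq W$.

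Alternatively, one can compute with the relations \eqref{ecuacion1}. Write $h_0=\lambda_1e_0+\lambda_2u_0$. Since $u_0\in\u$ and $W\subset\u$ with $\u$ abelian, for $w\in W$ we get $[h_0,w]=\lambda_1[e_0,w]=\lambda_1Mw$. This lies in $\u$ because $\u$ is an ideal, and in $\h$ because $\h$ is an ideal. Hence $[h_0,w]\in W$ and $\ad_{h_0}|_W=\lambda_1 M|_W$. This formula is worth recording for later use.

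It follows that $\h=\R h_0\ltimes_{\lambda_1M|_W}W$ is almost abelian; if $M|_W=0$ it is even abelian. This gives the dichotomy in the statement. The only point needing care is that $\h$ is really an ideal of $\g$ and not just a subalgebra. This holds because in the Fino--Vezzoni decomposition (Theorem \ref{F-V}) $\h$ equals $\ker\alpha$, and $\ker\alpha$ is an ideal since $d\alpha=0$ gives $\alpha([\g,\g])=0$. I expect no real obstacle beyond stating this explicitly.
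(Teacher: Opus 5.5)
Your proof is correct and is essentially the paper's argument: the paper also takes $W=\h\cap\u$, uses $h_0=\lambda_1e_0+\lambda_2u_0$ to get $[h_0,w]=\lambda_1[e_0,w]\in\u$, and uses $[h_0,w]\in\h$ together with $\lambda_1\neq0$ to conclude $[h_0,w]\in W$, which is your second computation (your intersection-of-ideals remark is the coordinate-free version of the same point). The membership $[h_0,w]\in\h$ only needs $\h$ to be a subalgebra, so your extra check that $\h=\ker\alpha$ is an ideal is correct but not needed.
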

	\begin{proof}
		If $\h=\u$, then $\h$ is abelian. If $\h\neq\u$, let $W$, $h_0$, $u_0$ be as in case  $\rii$ above. We have that $W$ is an abelian ideal of codimension $1$ in $\h$. Moreover, we have that $[h_0,W]\subseteq W$. Indeed, for any $w\in W$, then $[h_0,w]=rh_0+\tilde{w}$ for some $r\in\R$ and $\tilde{w}\in W$ . Using equation \eqref{ecuacion1} we have
		\[
		[h_0,w]=[\lambda_1e_0+\lambda_2u_0,w]=\lambda_1[e_0,w]\in \u.
		\]
		Then, $[h_0,w]=r(\lambda_1e_0+\lambda_2u_0)+\tilde{w}\in \u$ and thus $r\lambda_1=0$. Since $\lambda_1\neq0$, we get $r=0$, and we conclude that $\h$ is almost abelian. 
	\end{proof}

	\begin{obs}\label{IMPORTANTE}
		An important consequence of Proposition \ref{cokahler casi abeliana implica kahler casi abeliana} is that we can now apply Proposition \ref{kahler casi abel}. Note that $Jh_0\in W$: indeed, $Jh_0$ is perpendicular to $h_0$, and $Jh_0\in\h$ since $\h$ is $J$-invariant. Then the matrix $\ad_{h_0}|_W$ can be written in an orthonormal basis $\{Jh_0,x_1,Jx_1,\dots,x_{n-1},Jx_{n-1}\}$ of $W$ as
		\[\ad_{h_0}|_W=\left(
		\begin{array}{c|c}
			a&0\cdots 0 \\ \hline
			0& \\
			\vdots & \tilde{A} \\0& \\
		\end{array}
		\right), \text{with $a\in \R$ and $\tilde{A}\in \mathfrak{u}(n-1)$}.\]
		Note that $[h_0,Jh_0]=aJh_0$. 
	\end{obs}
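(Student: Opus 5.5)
The plan is to verify the two assertions of the observation in turn: first that $Jh_0\in W$, and then that $\ad_{h_0}|_W$ has the stated block form with respect to an orthonormal basis adapted to $J$. The first assertion is immediate. Since $J$ is orthogonal and skew-symmetric on $\h$, we have $\<Jh_0,h_0\>=0$. Moreover $Jh_0\in\h$, and $W=h_0^{\perp}\cap\h$, because $\{h_0,w_1,\dots,w_{2n-1}\}$ is an orthonormal basis of $\h$. Hence $Jh_0\in W$. Put $W':=W\cap (Jh_0)^{\perp}=\{h_0,Jh_0\}^{\perp}\cap\h$. This is a $J$-invariant subspace of dimension $2n-2$, so it has an orthonormal basis of the form $\{x_1,Jx_1,\dots,x_{n-1},Jx_{n-1}\}$. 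Write $A:=\ad_{h_0}|_W$, which maps $W$ into $W$ by Proposition \ref{cokahler casi abeliana implica kahler casi abeliana}.

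Next I would extract the block structure directly from the Kähler conditions $N_J=0$ and $d\omega=0$ on $\h$, using that $W$ is abelian.

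\emph{Integrability.} Take $w\in W'$. Then $Jw\in W'\subset W$ and $J(Jh_0)=-h_0$. Expanding $N_J(Jh_0,w)$, every bracket between two elements of $W$ vanishes, and what remains is
\[
0=N_J(Jh_0,w)=-AJw+JAw .
\]
Decompose $Aw=c\,Jh_0+v$ with $v\in W'$. Then $JAw=-c\,h_0+Jv$. Since $AJw\in W$ has no $h_0$-component, comparing $h_0$-components gives $c=0$. Therefore $A(W')\subseteq W'$ and $AJ=JA$ on $W'$.

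\emph{Closedness.} For $w,w'\in W$, the condition $d\omega(h_0,w,w')=0$ reduces to $\omega(Aw,w')=\omega(Aw',w)$. Take $w=Jh_0$ and $w'\in W'$. Using that $Aw'\in W'$, we get
\[
\omega(AJh_0,w')=\<JAw',Jh_0\>=\<Aw',h_0\>=0 .
\]
Write $AJh_0=a\,Jh_0+u$ with $u\in W'$. The identity above says $\<Ju,w'\>=0$ for all $w'\in W'$. Since $W'$ is $J$-invariant, this forces $u=0$. So $[h_0,Jh_0]=aJh_0$, and the first row and column of $A$ have the stated form.

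It remains to show that $\tilde A:=A|_{W'}$ lies in $\mathfrak{u}(n-1)$. We already know that $\tilde A$ commutes with $J$. The closedness identity on $W'$ only says that $J\tilde A$ is symmetric, so it controls $\tilde A$ only up to its Hermitian-symmetric part. This is the step I expect to be the main obstacle: showing that this symmetric part vanishes requires the full Kähler condition $\nabla J=0$, not just $d\omega=0$ together with $N_J=0$ evaluated on these particular pairs. I would not redo this computation. Instead I would invoke Proposition \ref{kahler casi abel}, in the form proved in \cite{Lauret-Will}, where the normal form is obtained in an orthonormal basis adapted to $(J,\prodint)$ whose first vector is $J$ applied to the unit normal of the abelian ideal. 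In our situation that unit normal is $h_0$ and the adapted basis is exactly $\{Jh_0,x_1,Jx_1,\dots\}$. The computation above shows that $Jh_0$ and $W'$ are the canonical pieces in that normal form, which makes the identification of the bases legitimate, and the cited result then gives $\tilde A\in\mathfrak{u}(n-1)$ and $a\in\R$.

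Finally, $a$ is only asserted to be real rather than nonnegative, because the sign of $h_0$ has been fixed by the choice in \eqref{ecuacion1} and cannot be flipped without changing that choice.
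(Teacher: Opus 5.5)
Your proof is correct, and its last step is the same move the paper makes. The paper only observes that $Jh_0\in W$, exactly as you do, and then reads off the block form from Proposition \ref{kahler casi abel}. Where you differ is that you derive the first row and column of $\ad_{h_0}|_W$ by hand. From $N_J(Jh_0,w)=0$ you get $A(W')\subseteq W'$ and $AJ=JA$ on $W'$, and from $d\omega(h_0,Jh_0,w')=0$ you get $[h_0,Jh_0]=aJh_0$; both computations are right.

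However, the ``main obstacle'' you then describe is not real, and your diagnosis of it is backwards. For a $J$-commuting $\tilde A$, symmetry of $J\tilde A$ forces the symmetric part of $\tilde A$ to vanish; it does not leave it free. For $w,w'\in W'$,
\[
\langle \tilde A Jw,w'\rangle=\langle J\tilde A w,w'\rangle=\langle J\tilde A w',w\rangle=-\langle \tilde A w',Jw\rangle .
\]
Since $Jw$ runs over all of $W'$, $\tilde A$ is skew-symmetric, and together with $\tilde AJ=J\tilde A$ this gives $\tilde A\in\mathfrak{u}(n-1)$. More generally, for a Hermitian structure the conditions $N_J=0$ and $d\omega=0$ already imply $\nabla J=0$, so nothing beyond the conditions you used is needed.

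With that line added, your argument is fully self-contained, and it is in fact tighter than the paper's. Proposition \ref{kahler casi abel} as stated only gives the normal form ``in some basis of $\mathfrak a$''. The observation, however, needs it in the specific orthonormal basis $\{Jh_0,x_1,Jx_1,\dots,x_{n-1},Jx_{n-1}\}$ adapted to the given K\"ahler structure. The paper passes over this point silently, and you patched it with an informal identification of bases. The direct computation makes both the citation and that identification unnecessary.
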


	Regarding the Lie bracket of the distinguished elements, we can state the following.
	
	\begin{lema} \label{corchetes}
		With the notation above, we have that: \\
		\ri \ $[\Z,h_0]=0,$ and $[\Z,Jh_0]=0,$ \\
		\rii \ $[e_0,u_0]=0.$ In particular, $M$ has non-trivial kernel.
	\end{lema}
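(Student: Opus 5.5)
The plan is to compute every bracket in two ways, using the two decompositions of $\g$: the Fino--Vezzoni decomposition $\g=\R\Z\ltimes_D\h$ and the almost abelian decomposition $\g=\R e_0\ltimes_M\u$. The key observation is that $D$ preserves $W=\h\cap\u$. Skew-symmetry then forces $Dh_0=0$.

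First I will show $D(W)\subseteq W$. For $w\in W$ we have $[\Z,w]=Dw\in\h$, since $\h$ is an ideal and $\Z$ acts on it by $D$. On the other hand, equation \eqref{ecuacion2} gives $\Z=\delta_1e_0+\delta_2u_0$. Since $u_0,w\in\u$ and $\u$ is abelian, $[\Z,w]=\delta_1[e_0,w]=\delta_1Mw\in\u$. Hence $Dw\in\h\cap\u=W$.

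Next, $D$ is skew-symmetric on $\h$ and leaves $W$ invariant, so it leaves the orthogonal complement of $W$ in $\h$ invariant. That complement is $\R h_0$, so $Dh_0=ch_0$ for some $c\in\R$. Skew-symmetry gives $c=\<Dh_0,h_0\>=0$, hence $[\Z,h_0]=Dh_0=0$. Because $D$ commutes with $J$, we also get $[\Z,Jh_0]=DJh_0=JDh_0=0$. This proves \ri.

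For \rii, I will expand $e_0$ and $u_0$ in terms of $\Z$ and $h_0$ using equations \eqref{ecuacion1} and \eqref{ecuacion2}: $e_0=\sigma_1\Z+\sigma_2h_0$ and $u_0=\mu_1\Z+\mu_2h_0$. Bilinearity and antisymmetry give
\[
[e_0,u_0]=(\sigma_1\mu_2-\sigma_2\mu_1)[\Z,h_0],
\]
which vanishes by \ri. Thus $Mu_0=[e_0,u_0]=0$ with $u_0\neq0$, so $\ker M\neq0$.

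The only step that needs care is the invariance $D(W)\subseteq W$. It relies on $\u$ being abelian, so that the $u_0$-component of $\Z$ acts trivially on $W$. The rest is direct substitution.
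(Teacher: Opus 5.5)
Your proof is correct and follows the paper's: part (i) is the same argument ($D$ preserves $W$, and skew-symmetry then forces $Dh_0=0$), phrased without the block matrix. In (ii) you change basis in the opposite direction, writing $e_0,u_0$ in terms of $\xi,h_0$, so $[e_0,u_0]=0$ follows immediately from $[\xi,h_0]=0$; the paper instead expands $\xi,h_0$ in terms of $e_0,u_0$ and then uses $\lambda\perp\delta$ to check that the coefficient $\delta_1\lambda_2-\delta_2\lambda_1$ is nonzero, a step your version avoids.
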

	\begin{proof}
		\ri \ If $w\in W$, then we know that $[\Z,w]\in \h$, and on the other hand, using equation \eqref{ecuacion2},
		\[
		[\Z,w]=\delta_1[e_0,w]+\delta_2[u_0,w]=\delta_1[e_0,w]\in \u,
		\]
		and therefore, $[\Z,W]\subseteq W$.  Consider an orthonormal basis $\{h_0,Jh_0,x_1,Jx_1,\dots,x_{n-1},Jx_{n-1}\}$ of $\h$, then the matrix $D=\ad_{\Z}$ is given in this basis by 
		$D=\left(
		\begin{array}{c|c}
			c&0\cdots 0 \\ \hline
			& \\
			v & \delta_1 \ad_{e_0}|_W \\
			& \\
		\end{array}
		\right)$.
		Since $D$ must be skew-symmetric, we get $c=0$ and $v=0$, that is, $[\Z,h_0]=0$. Finally, $[\Z,Jh_0]=0$ because $D$ commutes with $J$.
		
		$\rii$ It follows from $\ri$ and the equations \eqref{ecuacion1} and \eqref{ecuacion2} above that,
		\begin{align*}
			0=[\Z,h_0]&=[\delta_1e_0+\delta_2u_0,\lambda_1e_0+\lambda_2u_0] \\
			&=(\delta_1\lambda_2-\delta_2\lambda_1)[e_0,u_0]\\
			&=\<(\lambda_1,\lambda_2),(-\delta_2,\delta_1)\>[e_0,u_0].
		\end{align*}
		Since $\lambda \perp \delta$, we have that $\lambda$ and $(-\delta_2,\delta_1)$ are collinear, leading to $[e_0,u_0]=0.$
	\end{proof}
	
		\begin{obs}
			Case $\rii$ above ($\h\neq\u$) implies that $\h$ is not abelian, unless $\g$ is abelian. Indeed, if $\h\neq\u$ and $\h$ is abelian, from $[h_0,w]=\lambda_1[e_0,w]+\lambda_2[u_0,w]$ we have that $[e_0,w]=0$ for all $w\in W$. Therefore, the only non-trivial bracket on $\g$ is $[e_0,u_0]$, but it is also zero from Lemma \ref{corchetes}. Since we are considering non-abelian Lie algebras $\g$, then case $\rii$ above ($\h\neq\u$) implies $\h$ is not abelian.
		\end{obs}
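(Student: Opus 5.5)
The plan is to argue by contraposition: assume we are in case $\rii$ ($\h\neq\u$) and that $\h$ is abelian, and show that then $M=\ad_{e_0}|_{\u}$ vanishes identically, so that $\g$ is abelian. Since $\g$ is almost abelian with abelian ideal $\u$, every bracket in $\g$ is determined by $M$. Hence it suffices to show $[e_0,u]=0$ for all $u\in\u$. I will use the orthogonal decomposition $\u=\R u_0\oplus W$ fixed in case $\rii$ and treat the two summands separately.

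\textbf{Step 1: $M$ vanishes on $W$.} Let $w\in W$. Since $W\subseteq\h$ and $h_0\in\h$, the assumption that $\h$ is abelian gives $[h_0,w]=0$. Expanding $h_0$ by equation \eqref{ecuacion1}, I get
\[
0=[h_0,w]=\lambda_1[e_0,w]+\lambda_2[u_0,w].
\]
Here $[u_0,w]=0$ because $u_0,w\in\u$ and $\u$ is abelian, so $\lambda_1[e_0,w]=0$. Since $\lambda_1\neq0$ in case $\rii$, it follows that $[e_0,w]=0$ for every $w\in W$, that is, $M|_W=0$.

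\textbf{Step 2: $M$ vanishes on $u_0$.} Lemma \ref{corchetes}\,\rii\ gives $[e_0,u_0]=0$ directly. Note that this lemma holds in case $\rii$ with no abelianity assumption on $\h$, so it may be invoked here.

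\textbf{Conclusion.} Combining the two steps, $M=0$ on $\R u_0\oplus W=\u$, and therefore $\g$ is abelian. Since the paper only considers non-abelian $\g$, the contrapositive yields that in case $\rii$ the Kähler Lie algebra $\h$ is not abelian.

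There is no real obstacle in this argument. The only point needing care is that the nonvanishing of $\lambda_1$ is exactly what the hypothesis $\h\neq\u$ provides. The genuinely nontrivial input, $[e_0,u_0]=0$, was already established in Lemma \ref{corchetes} through the skew-symmetry of $D$.
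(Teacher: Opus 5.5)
Your proposal is correct and follows the paper's own reasoning: you show $M|_W=0$ by expanding $[h_0,w]=0$ via \eqref{ecuacion1} and using $\lambda_1\neq0$, then get $[e_0,u_0]=0$ from Lemma \ref{corchetes}\,\rii, exactly as the observation does. Your remark that Lemma \ref{corchetes} only uses the skew-symmetry of $D$, and not non-abelianity of $\h$, usefully makes explicit a point the paper leaves implicit.
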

	
	\begin{obs}\label{estructura cokahler}
		As a consequence of the lemma above we have that, in the basis \\
		$\{Jh_0,x_1,Jx_1,\dots,x_{n-1},Jx_{n-1}\}$ of $W$, 
		\[D|_W=\delta_1\ad_{e_0}|_W=\left(
		\begin{array}{c|c}
			0&0\cdots0 \\ \hline
			
			0& \\
			\vdots&\tilde{D} \\
			0&\\
		\end{array}
		\right),
		\text{for some $\tilde{D}\in\u(n-1)$}.\]
	\end{obs}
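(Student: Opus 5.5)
The block form will follow from Lemma~\ref{corchetes} together with the two defining properties of $D$: it is skew-symmetric and it commutes with $J$. I will work in the orthonormal, $J$-adapted basis $\{h_0,Jh_0,x_1,Jx_1,\dots,x_{n-1},Jx_{n-1}\}$ of $\h$ from Observation~\ref{IMPORTANTE}.

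\textbf{Step 1: $D|_W=\delta_1\ad_{e_0}|_W$ and $W$ is $D$-invariant.} For $w\in W$, equation~\eqref{ecuacion2} gives
\[
[\Z,w]=\delta_1[e_0,w]+\delta_2[u_0,w]=\delta_1[e_0,w],
\]
because $u_0,w\in\u$ and $\u$ is abelian. The proof of Lemma~\ref{corchetes}~\ri{} also shows that $[\Z,W]\subseteq W$. So $D|_W=\delta_1\ad_{e_0}|_W$ is an endomorphism of $W$.

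\textbf{Step 2: the first column vanishes.} By Lemma~\ref{corchetes}~\ri{} we have $DJh_0=[\Z,Jh_0]=0$. This is the first column of $D|_W$ in the given basis of $W$.

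\textbf{Step 3: the first row vanishes.} Take $w\in W$. Skew-symmetry of $D$ gives
\[
\<Dw,Jh_0\>=-\<w,DJh_0\>=0.
\]
Hence $D(W)$ has no $Jh_0$-component, and the first row is zero.

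\textbf{Step 4: the remaining block $\tilde D$ lies in $\u(n-1)$.} Let $V=\mathrm{span}\{x_i,Jx_i\}_{i=1}^{n-1}$, the orthogonal complement of $\mathrm{span}\{h_0,Jh_0\}$ in $\h$. Since $J$ is orthogonal and preserves $\mathrm{span}\{h_0,Jh_0\}$, the subspace $V$ is $J$-invariant. Since $D$ is skew-symmetric and kills $h_0$ and $Jh_0$, the same adjoint argument as in Step 3 shows that $V$ is $D$-invariant. So $\tilde D:=D|_V$ is skew-symmetric with respect to the restricted inner product and commutes with $J|_V$. In the orthonormal basis $\{x_1,Jx_1,\dots,x_{n-1},Jx_{n-1}\}$, $J|_V$ is the standard complex structure, so $\tilde D\in\u(n-1)$.

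The computations are routine. The only point needing care is Step 4: one must check that $V$ is invariant under both $J$ and $D$, so that $\tilde D$ is a genuine block and the two conditions (skew-symmetric, commuting with $J$) restrict to $V$. This follows from orthogonality once Steps 2–3 have shown that $D$ annihilates $h_0$ and $Jh_0$.
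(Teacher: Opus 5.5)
Your proof is correct and follows the same route as the paper, which reads this block form off Lemma~\ref{corchetes} and its proof: $[\Z,W]\subseteq W$ with $D|_W=\delta_1\ad_{e_0}|_W$, $DJh_0=0$ for the first column, skew-symmetry for the first row, and skew-symmetry together with $DJ=JD$ to place the remaining block in $\u(n-1)$. The one slip is in your closing remark: $Dh_0=0$ comes directly from Lemma~\ref{corchetes}~\ri{}, not from your Steps 2--3, and this does not affect the argument.
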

	
	We now have the ingredients needed to characterize the algebraic structure of an almost abelian Lie algebra admitting a coKähler structure.
	
	\begin{teo}
		\label{casi abeliana ida}
		Let $\g=\R e_0 \ltimes_M \u$ be an almost abelian Lie algebra of dimension $2n+1$. If $\g$ admits a coKähler structure $(\Z,J,\alpha,\prodint)$, then either $M\in\u(n)$, or there exists a basis $\{u_0,Jh_0,x_1,Jx_1,\dots,x_{n-1},Jx_{n-1}\}$ of $\u$ as above such that
		\[
		M=\left(
		\begin{array}{c|c|c}
			0&0&0\cdots0 \\ \hline
			0 & m & 0\cdots0 \\ \hline
			0&0& \\
			\vdots&\vdots&\tilde{M} \\
			0&0&\\
		\end{array}
		\right), \quad \text{with $m\in \R$, and $\tilde{M}\in \u(n-1).$}
		\]  
		
	\end{teo}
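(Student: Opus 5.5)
The proof splits according to the two cases \ri\ and \rii\ already set up before Proposition~\ref{cokahler casi abeliana implica kahler casi abeliana}.

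In case \ri, $\h=\u$, we may take $\Z=e_0$. Then $M=\ad_{e_0}|_\u=D$ is a skew-symmetric derivation of the abelian Kähler algebra $(\u,J|_\u,\prodint)$ commuting with $J$. Preserving the metric and commuting with $J$ is exactly the condition $M\in\u(n)$, with respect to the Hermitian structure $(J|_\u,\prodint)$ on $\u$. This gives the first alternative.

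In case \rii, $\h\neq\u$, we work in the basis $\{u_0,Jh_0,x_1,Jx_1,\dots,x_{n-1},Jx_{n-1}\}$ of $\u$. This set is orthonormal, since $W\perp u_0$ and $\{Jh_0,x_i,Jx_i\}$ is an orthonormal basis of $W$ (Observation~\ref{IMPORTANTE}). We compute $M$ column by column.
\begin{itemize}
\item First column: Lemma~\ref{corchetes}\,\rii\ gives $[e_0,u_0]=0$, so this column vanishes.
\item Action on $W$: by \eqref{ecuacion1}, $h_0=\lambda_1e_0+\lambda_2u_0$. Since $\u$ is abelian, $[h_0,w]=\lambda_1[e_0,w]$ for every $w\in W$. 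Hence
\[
M|_W=\ad_{e_0}|_W=\lambda_1^{-1}\ad_{h_0}|_W,
\]
using $\lambda_1\neq0$. In particular $M(W)\subseteq W$, so the first row of $M$ is zero on $W$, and together with the first column this shows the whole first row and column vanish.
\item The block on $W$: Observation~\ref{IMPORTANTE} describes $\ad_{h_0}|_W$ in the basis $\{Jh_0,x_1,\dots,Jx_{n-1}\}$ as a block with entry $a$ at $Jh_0$, zeros off the diagonal in that row and column, and $\tilde A\in\u(n-1)$ on the complement. Therefore $M|_W$ has the block form in the statement with $m=a/\lambda_1$ and $\tilde M=\lambda_1^{-1}\tilde A$. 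Since $\u(n-1)$ is a real vector space, $\tilde M\in\u(n-1)$.
\end{itemize}

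The step needing the most care is the claim that $\tilde A$ lies in $\u(n-1)$ with respect to the basis $\{x_i,Jx_i\}$. The concern is that the decomposition of Proposition~\ref{kahler casi abel} might not be orthogonal and $J$-adapted in the way we need. Observation~\ref{IMPORTANTE} already asserts that it is, but I would double check it via Remark~\ref{estructura cokahler}. There $D|_W=\delta_1\ad_{e_0}|_W$ is skew-symmetric and commutes with $J$ on $\operatorname{span}\{x_i,Jx_i\}$, and it kills $Jh_0$. If $\delta_1\neq0$, this directly yields $\tilde M=\delta_1^{-1}\tilde D\in\u(n-1)$. If $\delta_1=0$, then $\Z=\pm u_0$, and we fall back on the Kähler description of $\h$ given by Proposition~\ref{kahler casi abel}, using that $\{x_i,Jx_i\}$ spans the orthogonal complement of $\{h_0,Jh_0\}$, which is $J$-invariant.
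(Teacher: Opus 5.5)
Your proof is correct and follows the paper's own argument: for $\h=\u$ you get $M=D\in\u(n)$, and for $\h\neq\u$ you combine $[e_0,u_0]=0$ from Lemma~\ref{corchetes}, $M|_W=\lambda_1^{-1}\ad_{h_0}|_W$ from \eqref{ecuacion1}, and the block form of $\ad_{h_0}|_W$ from Observation~\ref{IMPORTANTE}, exactly as the paper does. Your extra check via Observation~\ref{estructura cokahler} is not needed. When $\delta_1\neq0$ it also shows $m=0$, and when $\delta_1=0$ it just falls back on the same Observation~\ref{IMPORTANTE} the paper uses.
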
 
	
	\begin{proof}
		Let $\g=\R \Z \ltimes_{D} \h$ be the decomposition of $\g$ associated with the coKähler structure $(\xi,\h,D)$. If $\h$ is abelian, then $M=D$ up to scale, and therefore $M\in \u(n)$. 
		Assume now that $\h$ is not abelian, and let $W,h_0,e_0,$ and $u_0$ be as in \eqref{ecuacion1} and \eqref{ecuacion2}. By Lemma \ref{corchetes}, we know that $[e_0,u_0]=0$. On the other hand, equation \eqref{ecuacion1} gives
		\[
		\lambda_1\mathrm{ad}_{e_0}|_W+ \lambda_2 \mathrm{ad}_{u_0}|_W=\mathrm{ad}_{h_0}|_W \subseteq W.
		\]
		Hence
		\[
		M|_W=\mathrm{ad}_{e_0}|_W=\frac{1}{\lambda_1}\mathrm{ad}_{h_0}|_W \subseteq W,
		\]
		and with respect to the decomposition $\u=u_0 \oplus W$, the matrix of $M$ has the form
		\[
		M=\left(\begin{array}{c|ccc}
			0&&0 \cdots 0\\ \hline
			0&&&\\
			\vdots& &M|_W & \\
			0 &&&\\
		\end{array}
		\right).
		\]
		Consider now the basis $\{Jh_0,x_1,Jx_1,\dots,x_{n-1},Jx_{n-1}\}$ of $W$ given by Observation \ref{IMPORTANTE}. In this basis, $M|_W$ decomposes as
		$$M|_W=\left(
		\begin{array}{c|c}
			m & 0\cdots0 \\ \hline
			0& \\
			\vdots&\tilde{M} \\
			0&\\
		\end{array}
		\right), \text{for some $m\in\R$, and $\tilde{M}\in \u(n-1)$}.$$
	\end{proof}

	\begin{coro} \label{coro importante}
		Let $\g=\R e_0 \ltimes_M \u$ be an almost abelian Lie algebra. Suppose that $\g$ admits a coKähler structure $(\Z,J,\alpha, \prodint)$, that is, $\g=\R \Z \ltimes_D \h$. Then
		$\g$ is unimodular if and only if either $\h$ is abelian or $[h_0,Jh_0]=0.$
		
		Moreover, if $\g$ is not unimodular, then $\g$ is the direct product of a non-unimodular, almost abelian	Kähler Lie algebra $\h$ with $\R$, that is, $D=0$.
	\end{coro}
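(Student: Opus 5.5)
The plan is to read off $\tr(M)$ from the normal form in Theorem \ref{casi abeliana ida}, and then, in the non-unimodular case, to show that $D$ is forced to vanish.

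\textbf{Unimodularity.} An almost abelian Lie algebra is unimodular if and only if $\tr(M)=0$, since $\ad_{e_0}$ acts on $\u$ by $M$ and $\ad_u$ is nilpotent for $u\in\u$.

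If $\h$ is abelian, then $M\in\u(n)$, which is traceless, so $\g$ is unimodular.

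If $\h$ is not abelian, Theorem \ref{casi abeliana ida} gives $\tr(M)=m+\tr(\tilde M)=m$, because $\tilde M\in\u(n-1)$ is skew-symmetric. From the proof of that theorem, $M|_W=\frac{1}{\lambda_1}\ad_{h_0}|_W$. Combined with Observation \ref{IMPORTANTE}, this gives $m=a/\lambda_1$, where $[h_0,Jh_0]=aJh_0$. Since $\lambda_1\neq0$, we get $\tr(M)=0$ if and only if $a=0$, that is, if and only if $[h_0,Jh_0]=0$. This proves the equivalence.

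\textbf{The splitting when $\g$ is non-unimodular.} Now suppose $\g$ is not unimodular. By the equivalence, $\h$ is not abelian and $a\neq0$.

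By Lemma \ref{corchetes}, $D$ kills $h_0$ and $Jh_0$. By Observation \ref{estructura cokahler}, $D|_W=\delta_1\ad_{e_0}|_W=\frac{\delta_1}{\lambda_1}\ad_{h_0}|_W$. In the basis of Observation \ref{IMPORTANTE}, its $(Jh_0,Jh_0)$ entry is therefore $\frac{\delta_1}{\lambda_1}a$.

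This entry must vanish, since $D$ is skew-symmetric (equivalently, since $[\Z,Jh_0]=0$). As $a\neq0$ and $\lambda_1\neq0$, we conclude $\delta_1=0$. Then $D|_W=\delta_1\ad_{e_0}|_W=0$, and together with $Dh_0=0$ this gives $D=0$ on all of $\h$.

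Hence $\g=\R\Z\times\h$ is a direct product. Here $\h$ is Kähler (by Theorem \ref{F-V}) and almost abelian (by Proposition \ref{cokahler casi abeliana implica kahler casi abeliana}). It is non-unimodular because $\tr(\ad_{h_0}|_\h)=\tr(\ad_{h_0}|_W)=a\neq0$.

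The step needing most care is the identification of the scalars: $m=a/\lambda_1$, and $D|_W=\frac{\delta_1}{\lambda_1}\ad_{h_0}|_W$ with $\delta_1$ forced to vanish. Both come directly from the relations \eqref{ecuacion1} and \eqref{ecuacion2}, using that $[u_0,W]=0$.
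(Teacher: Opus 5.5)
Your proof is correct and follows essentially the same route as the paper. The trace computation $m=a/\lambda_1$ is identical. For the splitting, you extract $\delta_1 a=0$ from $\xi=\delta_1e_0+\delta_2u_0$, whereas the paper extracts $\mu_2 a=0$ from $u_0=\mu_1\xi+\mu_2h_0$ and $[u_0,W]=0$; these are the same condition, since $\mu_2=\lambda_2=\mp\delta_1$. Your version also spells out why this forces $D=0$ and why $\h$ is non-unimodular, which the paper leaves implicit.
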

	
	\begin{proof}
		If $\h$ is abelian, then $M\in\u(n)$ and hence $\g$ is unimodular.
		
		Assume now that $\h$ is not abelian, and let $W,h_0,u_0,$ and $e_0$ be as in \eqref{ecuacion1} and \eqref{ecuacion2}. Using the basis \[\{Jh_0,x_1,Jx_1,\dots,x_{n-1},Jx_{n-1}\}\] of $W$, we have
		
		$$M|_W=\left(
		\begin{array}{c|c}
			m & 0\cdots0 \\ \hline
			0& \\
			\vdots&\tilde{M} \\
			0&\\
		\end{array}
		\right), \; D|_W=\left(
		\begin{array}{c|c}
			0&0\cdots0 \\ \hline
			0& \\
			\vdots&\tilde{D} \\
			0&\\
		\end{array}
		\right), \; A=ad_{h_0}|_W=\left(
		\begin{array}{c|c}
			a&0\cdots 0 \\ \hline
			0& \\
			\vdots & \tilde{A} \\0& \\
		\end{array}
		\right),$$
		where $\tilde M, \tilde D, \tilde A\in\u(n-1)$, $a,m\in\R$.
		It follows from equation \eqref{ecuacion1} that
		$M|_W=\frac{1}{\lambda_1}\ad_{h_0}|_W$, which implies that $m=\frac{a}{\lambda_1}$ and $\lambda_1\tilde{M}=\tilde{A}$. In particular, $\g$ is unimodular if and only if $m=0$, which is equivalent to $a=0$, that is, $[h_0,Jh_0]=0$.
		
		On the other hand, from equation \eqref{ecuacion1} we have $0=\mu_1\ad_\Z|_W+\mu_2\ad_{h_0}|_W$, so $\mu_2a=0$. 
		If $\g$ is not unimodular, then $a\neq0$, hence $\mu_2=0$. This means that $\Z\in\u$, and therefore $\g=\R\Z\times\h$ is the direct product of a non-unimodular Kähler Lie algebra with $\R$.
	\end{proof}
	
	\begin{obs}
Note that by Theorem \ref{casi abeliana ida} and Lemma \ref{corchetes}, when $\h$ is non-abelian, the $\u$-component of any admissible Reeb vector must lie in $\ker(M)$. 
	\end{obs}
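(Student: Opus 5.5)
The plan is to read the claim directly off the construction preceding Lemma~\ref{corchetes}. Throughout we work in case \rii, $\h\neq\u$. This is forced: by the observation following Lemma~\ref{corchetes}, $\h$ non-abelian is only possible when $\h\neq\u$.

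First, fix a coKähler structure $(\Z,\h,D)$ on $\g=\R e_0\ltimes_M\u$, normalized as before so that $|e_0|=1$ and $e_0\perp\u$. Decompose the Reeb vector as $\Z=\delta_1 e_0+u_\Z$ with $u_\Z\in\u$. Then $u_\Z$ is, by definition, the $\u$-component of $\Z$, and we must show $Mu_\Z=[e_0,u_\Z]=0$.

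Second, identify $u_\Z$ with a multiple of the distinguished vector $u_0$. For this, recall that $W=\h\cap\u$ has dimension $2n-1$ and that $u_0$ is the unit vector of $\u$ orthogonal to $W$. Since $\Z\perp\h\supseteq W$ and $e_0\perp\u\supseteq W$, the vector $u_\Z=\Z-\delta_1e_0$ is orthogonal to $W$. As $u_\Z$ lies in $\u$, it is therefore proportional to $u_0$. This recovers equation~\eqref{ecuacion2}, $\Z=\delta_1e_0+\delta_2u_0$, so $u_\Z=\delta_2u_0$.

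Third, invoke Lemma~\ref{corchetes}\,\rii, which gives $[e_0,u_0]=0$. Hence $M u_\Z=\delta_2[e_0,u_0]=0$, that is, $u_\Z\in\ker(M)$. As a consistency check, in the basis of Theorem~\ref{casi abeliana ida} the first column of $M$, corresponding to $u_0$, vanishes, so $u_0\in\ker M$ is visible directly from the normal form.

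The only point needing care is the second step: the observation speaks of \emph{any} admissible Reeb vector, so the argument must not silently depend on a chosen basis. This is handled by noting that $W$, and hence the line $\R u_0=\u\cap W^\perp$, is determined by $\h$ and $\u$ alone. Consequently the orthogonality argument applies verbatim to every coKähler structure with non-abelian $\h$. Everything else is immediate from results already proved.
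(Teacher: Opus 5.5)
Your proposal is correct and is essentially the argument the paper intends: equation \eqref{ecuacion2} writes $\Z=\delta_1e_0+\delta_2u_0$, and Lemma \ref{corchetes}\,\rii\ gives $[e_0,u_0]=0$; your orthogonality step just makes explicit why the $\u$-component of $\Z$ (relative to the normalization $e_0\perp\u$) is a multiple of $u_0$. One small correction: that non-abelian $\h$ forces $\h\neq\u$ follows immediately from $\u$ being abelian, whereas the observation after Lemma \ref{corchetes} that you cite proves the converse implication.
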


	We now prove a converse to Theorem \ref{casi abeliana ida}. More precisely, we show that every matrix $M$ of the above form gives rise to a coKähler structure, and we determine all possible vectors $\Z$ defining such a structure.
	
	\begin{teo} \label{casi abeliana vuelta}
		Let $\g=\R e_0 \ltimes_M \mathfrak{u}$ be an almost abelian Lie algebra of dimension $2n+1$. If $M$ satisfies one of the following conditions:
		\begin{itemize}
			\item[\ri] $M\in \u(n)$, and $\ker(M)$ is trivial.
			\item[\rii] $M=\left(
			\begin{array}{c|c|c}
				0&0&0\cdots0 \\ \hline
				0 & 0 & 0\cdots0 \\ \hline
				0&0& \\
				\vdots&\vdots&\tilde{M} \\
				0&0&\\
			\end{array}
			\right)$, for some basis of $\g$ and $\tilde{M}\in\u(n-1)$.
			
			\item[\riii] $M=\left(
			\begin{array}{c|c|c}
				0&0&0\cdots0 \\ \hline
				0 & m & 0\cdots0 \\ \hline
				0&0& \\
				\vdots&\vdots&\tilde{M} \\
				0&0&\\
			\end{array}
			\right)$, for some basis of $\g$, $m\neq0$ and $\tilde{M}\in\u(n-1)$.
		\end{itemize}
		Then $\g$ admits a coKähler structure $(\Z,\h,D)$.
		
		Moreover, in $\ri$ $\Z=te_0+w$ for any $t\neq0$ and $w\in\u$. In $\rii$ $\Z=\delta_1e_0+\delta_2u_0$ for any $u_0\in\ker(M)$. Finally, in $\riii$ $\Z$ is any unit vector in $\ker(M)$.
	\end{teo}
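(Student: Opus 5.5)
The plan is to build, in each case, a triple $(\Z,\h,D)$: a codimension-one ideal $\h$ complementary to $\R\Z$, an inner product and complex structure $J$ making $\h$ Kähler, and $D=\ad_\Z|_\h$ skew-symmetric and commuting with $J$. Theorem~\ref{F-V} then gives the coKähler structure. For each case we also check which choices of $\Z$ work. In all cases we declare the given basis of $\u$ orthonormal and put $e_0\perp\u$ with $|e_0|=1$.

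\textbf{Case \ri.} Here $M\in\u(n)$, so $M$ is skew-symmetric for some inner product and commutes with a complex structure $J$ on $\u$. Take $\h=\u$, which is abelian and hence Kähler with this Hermitian structure. For $\Z=te_0+w$ with $t\neq0$, $w\in\u$, we get $\ad_\Z|_\u=tM$, since $\u$ is abelian. So $D=tM$ is a skew-symmetric derivation commuting with $J$. To make $\Z$ orthogonal to $\h$ as the correspondence requires, we change the metric: declare $\Z$ a unit vector orthogonal to $\u$, keeping the metric on $\u$. Then $\g=\R\Z\ltimes_D\h$ is the required decomposition. The hypothesis $\ker M=0$ only separates this case from \rii; it does not enter the construction.

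\textbf{Cases \rii\ and \riii.} Let $u_0$ be the first basis vector, so $u_0\in\ker M$, and let $y$ be the second, with $My=my$; write $W'=\mathrm{span}\{x_i,Jx_i\}$, on which $\tilde M$ acts. Take $\h=\R e_0\ltimes\bigl(\R y\oplus W'\bigr)$ with complex structure $J e_0=y$, extended by the unitary structure on $W'$, and the orthonormal metric. Then $\ad_{e_0}$ on $\R y\oplus W'$ is $\mathrm{diag}(m,\tilde M)$ with $m\geq0$ after possibly replacing $e_0$ by $-e_0$. By Proposition~\ref{kahler casi abel}, $\h$ is a Kähler almost abelian Lie algebra; in particular, the explicit Hermitian structure there is the one we just used.

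In case \riii\ we take $\Z=u_0$, or any unit vector of $\ker M$ for which this works; $D=\ad_{u_0}|_\h=0$ and $\g=\R u_0\times\h$. One has to check that a general unit $\Z\in\ker M$ gives a complement $\h$ with the required form. The natural approach is to rechoose the basis so that $\Z$ plays the role of $u_0$, which should be possible because $\ker M=\R u_0\oplus\ker\tilde M$ and $\ker\tilde M$ is $J$-invariant. In case \rii, $m=0$, so $\R e_0\oplus\R u_0$ is central modulo the $W'$ action. We take $\Z=\delta_1e_0+\delta_2u_0$ and $h_0=\lambda_1e_0+\lambda_2u_0$ with $\lambda\perp\delta$ and $\lambda_1\neq0$. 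Then $\h=\R h_0\oplus\R y\oplus W'$ with $Jh_0=y$ is almost abelian with matrix $\mathrm{diag}(0,\lambda_1\tilde M)$, hence Kähler, and $D=\delta_1\,\mathrm{diag}(0,\tilde M)$ is skew-symmetric and commutes with $J$.

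The delicate step, and the main obstacle, is verifying that $\h$ is a subalgebra and ideal and that $D$ is a derivation for the stated families of $\Z$. This is the computation $[\Z,h_0]=(\delta_1\lambda_2-\delta_2\lambda_1)[e_0,u_0]=0$, using $[e_0,u_0]=0$ as in Lemma~\ref{corchetes}. One must also handle degenerate choices, such as $\delta_2=0$ or $\lambda_1=0$, by choosing $h_0$ appropriately. Finally, in case \riii\ one must confirm that a $\Z$ with an $e_0$-component cannot work, because $\ad_\Z$ would then have the nonzero real eigenvalue $m$ and could not be skew-symmetric. This last check is the converse of the Reeb-direction claim, and it mirrors the proof of Corollary~\ref{coro importante}.
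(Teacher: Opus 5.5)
Your existence constructions are the same as the paper's. In \ri\ you take $\h=\u$ with $D=tM$. In \rii\ you take $h_0\perp\Z$ in the $(e_0,u_0)$-plane and $\h=\R h_0\ltimes W$ with action $0\oplus\lambda_1\tilde M$, so that $D=0\oplus 0\oplus\delta_1\tilde M$. In \riii\ you take $\g=\R u_0\times(\R e_0\ltimes W)$ with $D=0$. What you call the main obstacle is immediate. In each case $\h$ contains $[\g,\g]=\mathrm{Im}\,M$, so it is an ideal and $\ad_\Z|_\h$ is automatically a derivation. Also, $[e_0,u_0]=0$ is simply the hypothesis $u_0\in\ker M$; Lemma \ref{corchetes} belongs to the forward direction. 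The degenerate case $\delta_2=0$ needs no special $h_0$: it gives $\h=\u$ and $D=\delta_1M\in\u(n)$.

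The genuine gap is the necessity half of the ``Moreover'' clause. The paper proves it as a description of \emph{all} admissible Reeb vectors, and you treat it only partially, in \riii.

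In \ri\ you say $\ker M=0$ does not enter. But it is exactly what rules out coK\"ahler structures with $\h\neq\u$, that is, Reeb vectors lying in $\u$. The paper gets this from Lemma \ref{corchetes}, which for any structure with $\h\neq\u$ produces a nonzero $u_0$ with $Mu_0=0$. A quick alternative is a rank argument. If $\Z\in\u$, then $\ad_\Z$ has image in $\R M\Z$. Since $\ad_\Z=0\oplus D$ on $\R\Z\oplus\h$ and a skew-symmetric $D$ has even rank, $D=0$ and hence $M\Z=0$.

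The same rank argument is what \riii\ still needs. Your eigenvalue argument is a valid substitute for the paper's appeal to Corollary \ref{coro importante}, but it only gives $\Z\in\u$, not $\Z\in\ker M$.

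In \rii\ you do not address exhaustion at all. There you need Lemma \ref{corchetes}, with $e_0$ normalized orthogonal to $\u$ for the coK\"ahler metric, to force the $\u$-component of $\Z$ into $\ker M$. You also need your basis-rechoosing remark, from \riii, to allow an arbitrary $u_0\in\ker M$.
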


	\begin{proof}
		
		$\ri$ Define $\h=\u$ and endow $\h$ with the canonical Kähler structure. Consider $\Z=te_0+w$ for any $t\neq0$ and $w\in\u$, then $D=tM$ and by Theorem \ref{F-V} yields a coKähler structure on $\g$.
		Moreover, assume that there exists another coKähler structure for which $\h\neq\u$, thus it is not abelian. Then we may write $\Z=\delta_1 e_0 +\delta_2 u_0$ as in \eqref{ecuacion2}, and Lemma \ref{corchetes} gives $u_0\in\ker(M)$, a contradiction.
		
		$\rii$
		Let $\mathcal{B}=\{u_0,u_1,\dots,u_{2n-1}\}$ be a basis of $\u$ in which $M$ has the stated form. In particular, $u_0$ is an arbitrary element of $\ker(M)$.
		We want to define $\Z$ so that it determines a coKähler structure.
		Consider $\Z=\delta_1e_0+\delta_2u_0$ where $\delta_1^{2}+\delta_2^{2}=1$.  
		
		Now take $h_0=-\delta_2e_0+\delta_1u_0$. Then $\{\Z,h_0,u_1,\dots,u_{2n-1}\}$ is a new basis of $\g$, and we define
		$\h=\R h_0\ltimes W$, where $W$ is generated by $\{u_1,\dots,u_{2n-1}\}$ and the action is given by \[\left(
		\begin{array}{c|c}
			0 & 0\cdots0 \\ \hline
			0& \\
			\vdots&-\delta_2\tilde{M} \\
			0&\\
		\end{array}
		\right).
		\]
		Proposition \ref{kahler casi abel} implies that $\h$ admits a Kähler structure. Hence $\R\Z\ltimes_{D} \h$, with \[D=\left(
		\begin{array}{c|c|c}
			0&0&0\cdots0 \\ \hline
			0 & 0 & 0\cdots0 \\ \hline
			0&0& \\
			\vdots&\vdots&\delta_1\tilde{M} \\
			0&0&\\
		\end{array}
		\right), \quad \text{with respect to the basis $\{h_0,u_1,\dots,u_{2n-1}\}$},\]
		defines a coKähler structure by Theorem \ref{F-V}. Finally, any $u_0\in\ker(M)$ gives a different vector $\Z$. Moreover, Theorem \ref{casi abeliana ida} and Lemma \ref{corchetes} show that the $\u$-component of $\Z$ must lie in $\ker(M)$; therefore, by letting $u_0$ vary in $\ker(M)$,  the family constructed above exhausts all possible choices of $\Z$.

		$\riii$
		In this case, let $\Z=u_0\in\ker(M)$. Then
		$\g=\R\Z\times\h$, where $\h=\R e_0\ltimes W$ and the action is given by \[\left(
		\begin{array}{c|c}
			m & 0\cdots0 \\ \hline
			0& \\
			\vdots&\tilde{M} \\
			0&\\
		\end{array}
		\right),\] 
		for some basis $\{u_1,\dots,u_{2n-1}\}$.
		Proposition \ref{kahler casi abel} implies that $\h$ admits a Kähler structure, and then Theorem \ref{F-V} yields a coKähler structure on $\R\Z\times\h$.
        Moreover, since
		$\g$ is not unimodular ($m\neq0$), then the same argument as in the proof of Corollary \ref{coro importante} implies that every coK\"ahler structure $(\xi,\h,D)$ satisfies that 
        $\Z\in\u$. By Lemma \ref{corchetes} we have $[e_0,\Z]=0$, hence $\Z\in\ker(M)$. Therefore, every possible Reeb vector arises from a unit vector in $\ker(M)$.
	\end{proof}

	Theorems \ref{casi abeliana ida} and \ref{casi abeliana vuelta} completely characterize almost abelian Lie algebras $\g=\R e_0 \ltimes_M \u$ admitting coKähler structures and also determine all possible such structures $(\Z,D,\h)$.
	Using this characterization, we now describe explicitly the correspondence between almost abelian Lie algebras $\g$ admitting coKähler structures and the corresponding almost abelian K\"ahler Lie algebra $\h$.
	
	\begin{obs}
		It was proved in Proposition \ref{cokahler casi abeliana implica kahler casi abeliana} that if an almost abelian Lie algebra $\g$ admits a coKähler structure $(\xi,D,\h)$, then $\h$ must be an almost abelian K\"ahler Lie algebra. Moreover, if $\h$ is not abelian, then $\h=\R h_0\ltimes_A W$ with $A$ given by a multiple of $M|_W$.
		
		On the other hand, any almost abelian K\"ahler Lie algebra $\h=\R h_0\ltimes_A W$ can be extended to an almost abelian coKähler Lie algebra. Indeed, recall that $M,D$ and $A$ can be written as
		$$M|_W=\left(
		\begin{array}{c|c}
			m & 0\cdots0 \\ \hline
			0& \\
			\vdots&\tilde{M} \\
			0&\\
		\end{array}
		\right), \; D|_W=\left(
		\begin{array}{c|c}
			0&0\cdots0 \\ \hline
			0& \\
			\vdots&\tilde{D} \\
			0&\\
		\end{array}
		\right) \; A=\ad_{h_0}|_W=\left(
		\begin{array}{c|c}
			a&0\cdots 0 \\ \hline
			0& \\
			\vdots & \tilde{A} \\0& \\
		\end{array}
		\right),$$ 
		in the basis $\{Jh_0,u_1,\dots,u_{2n-2}\}$, then it follows from equation \eqref{ecuacion1} and equation \eqref{ecuacion2} that
		$$0=\delta_1a, \; a=\delta_2m, \; \tilde{A}=\delta_2\tilde{M}, \; \tilde{D}=\delta_1\tilde{M}.$$    
		Then, an almost abelian K\"ahler Lie algebra $\h=\R h_0\ltimes_A W$ can be extended to an almost abelian coKähler Lie algebra $\g=\R e_0\ltimes_M \u$ where $\u=\R u_0\oplus W$ and $M|_W$ is a multiple of $A$ and $Mu_0=0$.  	We can summarize this relation in Table \ref{tab:mi_tabla}. 	
	\end{obs}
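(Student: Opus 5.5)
The plan is to prove the two halves of the observation separately: first the necessary relations among $M$, $D$ and $A$, then the extension claim, which I will realize by a trivial product.

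\textbf{Relations.} Assume $\g=\R e_0\ltimes_M\u$ carries a coKähler structure $(\Z,\h,D)$ with $\h$ non-abelian, and keep the notation of \eqref{ecuacion1}–\eqref{ecuacion2}. Since $\u$ is abelian and $u_0,W\subset\u$, we have $\ad_{u_0}|_W=0$. Hence
\[
\ad_{h_0}|_W=\lambda_1 M|_W \quad\text{and}\quad D|_W=\ad_\Z|_W=\delta_1 M|_W .
\]
Because $\delta=\pm(-\lambda_2,\lambda_1)$, we can fix the sign of $\Z$ so that $\delta_2=\lambda_1$. Then $A=\delta_2 M|_W$.

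In the basis $\{Jh_0,x_1,Jx_1,\dots\}$ of Observation \ref{IMPORTANTE}, all three matrices have the block form displayed in the observation. This holds for $A$ by Observation \ref{IMPORTANTE}, for $M|_W$ by Theorem \ref{casi abeliana ida}, and for $D|_W$ by Observation \ref{estructura cokahler}. Comparing blocks gives
\[
a=\delta_2 m,\qquad \tilde A=\delta_2\tilde M,\qquad \tilde D=\delta_1\tilde M,\qquad 0=\delta_1 m .
\]
The last identity comes from the vanishing $(1,1)$-entry of $D|_W$, which is Lemma \ref{corchetes}(i). Multiplying $0=\delta_1 m$ by $\delta_2$ gives $0=\delta_1 a$.

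By Lemma \ref{corchetes}(ii) we have $Mu_0=0$. Together with $A=\delta_2 M|_W$ and $\delta_2\neq 0$, this shows that $M|_W$ is a nonzero multiple of $A$ and that $Mu_0=0$, which is exactly the form claimed.

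\textbf{Extension.} Let $\h=\R h_0\ltimes_A W$ be an almost abelian Kähler Lie algebra. By Proposition \ref{kahler casi abel}, $A$ has the block form with $a\ge 0$ and $\tilde A\in\u(n-1)$ in a basis $\{Jh_0,u_1,\dots,u_{2n-2}\}$. Construct
\[
\u=\R u_0\oplus W,\qquad e_0:=h_0,\qquad M|_W:=A,\qquad Mu_0:=0 .
\]
Then $M$ has the form of Theorem \ref{casi abeliana vuelta}: case (ii) when $a=0$ and case (iii) when $a\neq 0$.

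Rather than appeal to the case analysis, take $\Z=u_0$ and $D=0$. Then $\g=\R\Z\times\h$, and the zero derivation is trivially skew-symmetric and commutes with $J$. Theorem \ref{F-V} therefore produces a coKähler structure on $\g$ whose associated Kähler algebra is precisely the given $\h$. This is the choice $\delta_1=0$, $\delta_2=1$, and it is consistent with the relations above. When $a=0$, other choices of $\delta$ are also allowed; they give $D$ with $\tilde D=\delta_1\tilde M$, as in the proof of Theorem \ref{casi abeliana vuelta}(ii).

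\textbf{Main obstacle.} I expect the main obstacle to be bookkeeping rather than conceptual: fixing the sign and normalization of $\Z$ so that $\delta_2=\lambda_1$ rather than $-\lambda_1$. I will also check that the vector $Jh_0$ used for the block decomposition is the same vector for $M$, $D$ and $A$. This holds because all three are computed in the single basis of Observation \ref{IMPORTANTE}, which is defined from $h_0$ and $J$ alone.
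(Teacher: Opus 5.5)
Your proposal is correct and follows essentially the paper's route. You use \eqref{ecuacion1}--\eqref{ecuacion2} to write $\ad_{h_0}|_W=\lambda_1 M|_W$ and $D|_W=\delta_1 M|_W$, and you compare blocks with Lemma \ref{corchetes}(i) supplying the zero corner of $D|_W$. You then realize the extension as $\R u_0\times\h$ with $D=0$, which is exactly the construction of Theorem \ref{casi abeliana vuelta}(iii) and case (ii) with $\delta_1=0$. Your explicit normalization $\delta_2=\lambda_1$ is a useful clarification, since the paper writes $a=\delta_2 m$ and $\tilde A=\delta_2\tilde M$ while only $\delta_2=\pm\lambda_1$ is guaranteed (the proof of Theorem \ref{casi abeliana vuelta}(ii) in fact uses the opposite sign).
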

	
	\begin{table}[H]
		\centering
		\begin{tabular}{|c|c|c|}
			\hline
			\textbf{M} & \textbf{D} & \textbf{A} \\ \hline
			$M\in\u(n)$     & $M$     & $0$     \\ \hline
			$\left(
			\begin{array}{c|c|c}
				0&0&0\cdots0 \\ \hline
				0 & 0 & 0\cdots0 \\ \hline
				0&0& \\
				\vdots&\vdots&\tilde{M} \in\u(n-1)\\
				0&0&\\
			\end{array}
			\right)$     & $\left(
			\begin{array}{c|c|c}
				0&0&0\cdots0 \\ \hline
				0 & 0 & 0\cdots0 \\ \hline
				0&0& \\
				\vdots&\vdots&\delta_1\tilde{M} \in\u(n-1)\\
				0&0&\\
			\end{array}
			\right)$     & $\left(
			\begin{array}{c|c}
				0 & 0\cdots0 \\ \hline
				0& \\
				\vdots&\delta_2\tilde{M} \in\u(n-1)\\
				0&\\
			\end{array}
			\right)$     \\ \hline
			$\left(
			\begin{array}{c|c|c}
				0&0&0\cdots0 \\ \hline
				0 & m\neq0 & 0\cdots0 \\ \hline
				0&0& \\
				\vdots&\vdots&\tilde{M} \in\u(n-1)\\
				0&0&\\
			\end{array}
			\right)$     & $0$   & $\left(
			\begin{array}{c|c}
				m\delta_2 & 0\cdots0 \\ \hline
				0& \\
				\vdots&\delta_2\tilde{M} \in\u(n-1)\\
				0&\\
			\end{array}
			\right)$     \\ \hline
		\end{tabular}
		\caption{Correspondence between $M,D,A$, where $A$ in the basis $\{Jh_0,u_1,\dots,u_{2n-2}\}$ of $W$, $M$ is in the basis $\{u_0,Jh_0,u_1,\dots,u_{2n-2}\}$ of $\u$, and $D$ in the basis $\{h_0,Jh_0,u_1,\dots,u_{2n-2}\}$ of $\h$.}
		\label{tab:mi_tabla}
	\end{table}

		Another direct consequence of our characterization of almost abelian Lie algebras admitting a coK\"ahler structure is the following:
	\begin{coro}\label{cokahler es plana o producto direct}
		Let $\g=\R e_0 \ltimes_M \u$ be an almost abelian Lie algebra of dimension $2n+1$. If $\g$ admits a coKähler structure $(\Z,J,\alpha,\prodint)$, then $(\g,\prodint)$ is flat or $\g$ is the direct product of a non-unimodular, almost abelian Kähler Lie algebra $\h$ with $\R$. 
	\end{coro}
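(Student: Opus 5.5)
The plan is a dichotomy on unimodularity, reducing everything to results already established. Fix a coKähler structure $(\Z,J,\alpha,\prodint)$ on $\g=\R e_0\ltimes_M\u$, written as $\g=\R\Z\ltimes_D\h$ via Theorem~\ref{F-V}.

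\emph{Case 1: $\g$ is unimodular.} Proposition~\ref{cokahler unimod} states directly that $\g$ is flat. One subtlety needs care: flatness there is with respect to the coKähler metric itself, not merely some metric. To be safe, I will recall that Fino and Vezzoni's argument produces flatness of $\prodint$. Alternatively, I can argue directly in the almost abelian setting.

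If $\h=\u$ (case \ri), then $M=D$ is skew-symmetric for $\prodint$ with $e_0\perp\u$. An almost abelian metric Lie algebra with $\ad_{e_0}|_\u$ skew-symmetric is flat, since the Koszul formula gives $\nabla_{e_0}=\ad_{e_0}$ and $\nabla_{\u}=0$.

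If $\h\neq\u$, unimodularity forces $m=0$ in Theorem~\ref{casi abeliana ida}. Then $M$ vanishes on $u_0$ and $Jh_0$ and is unitary, hence skew-symmetric, on the complement spanned by $x_i,Jx_i$. All of these bases are orthonormal and orthogonal to $e_0$ by construction, so $M$ is skew-symmetric with respect to $\prodint|_\u$, and the same Koszul computation yields flatness.

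\emph{Case 2: $\g$ is not unimodular.} Corollary~\ref{coro importante} gives directly that $D=0$ and $\Z\in\u$. Hence $\g=\R\Z\times\h$ with $\h$ a Kähler Lie algebra. By Proposition~\ref{cokahler casi abeliana implica kahler casi abeliana}, $\h$ is abelian or almost abelian. It cannot be abelian, since then $\g$ would be unimodular. Moreover, $\h$ is non-unimodular, because $\tr\ad_x$ on $\g$ equals $\tr\ad_x$ on $\h$ for $x\in\h$ in a direct product with the central factor $\R\Z$.

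The only point requiring attention is the flat-metric verification in Case 1. The computation of $\nabla$ for $\R e_0\ltimes_M\u$ with $M$ skew-symmetric and $e_0\perp\u$ of unit length is standard and routine.
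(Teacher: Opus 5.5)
Your proof is correct and follows essentially the paper's route. You split according to unimodularity (equivalently $m=0$ or $m\neq 0$ in Theorem~\ref{casi abeliana ida}), then invoke Proposition~\ref{cokahler unimod} for flatness in the unimodular case and Corollary~\ref{coro importante} for the product splitting otherwise.

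You also add a Koszul computation: when $M$ is skew-symmetric in the orthonormal basis $\{u_0,Jh_0,x_1,Jx_1,\dots\}$ with $e_0\perp\u$, it gives $\nabla_{e_0}=\ad_{e_0}$ and $\nabla_{\u}=0$, so the coK\"ahler metric itself is flat. Together with your trace argument for the non-unimodularity of $\h$, these are valid, self-contained details that the paper leaves implicit.
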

	\begin{proof}
		It follows from Theorem \ref{casi abeliana ida} that $M\in\u(n)$, or $M$ has the following form
		\[
		M=\left(
		\begin{array}{c|c|c}
			0&0&0\cdots0 \\ \hline
			0 & m & 0\cdots0 \\ \hline
			0&0& \\
			\vdots&\vdots&\tilde{M} \\
			0&0&\\
		\end{array}
		\right), \quad \text{with $m\in\R$ and $\tilde{M}\in \u(n-1),$}
		\] 
        for some orthonormal basis $\{u_0,Jh_0,x_1,Jx_1,\dots,x_{n-1},Jx_{n-1}\}$ of $\u$.
		If $m=0$, then $\g$ is unimodular, and hence flat by Proposition \ref{cokahler unimod}. On the other hand, if $m\neq0$, then $\g$ is not unimodular, and Corollary \ref{coro importante} shows that $\g$ is the direct product of a non-unimodular almost abelian Kähler Lie algebra $\h$ with $\R$. 
	\end{proof}
	
	
	\begin{obs}\label{obs:splitting-special}
	Corollary \ref{cokahler es plana o producto direct} shows that every non-unimodular almost abelian Lie algebra with a coK\"ahler structure splits as the direct product of a non-unimodular K\"ahler Lie algebra and $\R$. 
	It is therefore natural to ask whether this splitting phenomenon holds for arbitrary non-unimodular coK\"ahler Lie algebras.
	\end{obs}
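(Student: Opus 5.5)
The claim in this observation follows directly from the almost abelian results already established; the only real content is to check that the non-unimodular hypothesis forces $D=0$. I would argue as follows. Let $\g=\R e_0\ltimes_M\u$ be non-unimodular and carry a coK\"ahler structure $(\Z,\h,D)$. By Theorem \ref{casi abeliana ida}, either $M\in\u(n)$ or $M$ has the block form with entries $0$, $m$, $\tilde M$. The first alternative, and also the second with $m=0$, give $\tr(M)=0$, so $\g$ would be unimodular. Hence $\h\neq\u$, so $\h$ is non-abelian by the observation following Lemma \ref{corchetes}, and $m\neq 0$.

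Next I would use the relation $M|_W=\frac1{\lambda_1}\ad_{h_0}|_W$ from the proof of Corollary \ref{coro importante}. It gives $a=\lambda_1 m\neq 0$, so $[h_0,Jh_0]=aJh_0\neq 0$. The key step is to evaluate the identity $u_0=\mu_1\Z+\mu_2h_0$ from \eqref{ecuacion1} on $Jh_0\in W$. Since $\u$ is abelian, $\ad_{u_0}|_W=0$, and Lemma \ref{corchetes} gives $[\Z,Jh_0]=0$. Therefore
\[
0=\mu_2[h_0,Jh_0]=\mu_2 a\,Jh_0 ,
\]
which forces $\mu_2=0$. Then $u_0=\mu_1\Z$, so $\Z\in\u$ up to scale.

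Finally I would show that $D=\ad_\Z|_\h=0$. On $W\subset\u$ this holds because $\u$ is abelian and $\Z\in\u$. On $h_0$ it holds by Lemma \ref{corchetes}(i), or directly because $h_0=\lambda_1e_0+\lambda_2u_0$ and $[e_0,u_0]=0$. Thus $\g=\R\Z\times\h$ is a direct product. Here $\h$ is almost abelian by Proposition \ref{cokahler casi abeliana implica kahler casi abeliana}, and it is non-unimodular because $\tr(\ad_{h_0}|_W)=a+\tr\tilde A=a\neq 0$, using $\tr\tilde A=0$ since $\tilde A\in\u(n-1)$.

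The only delicate point I expect is making sure that the bases used in Observation \ref{IMPORTANTE} and in Theorem \ref{casi abeliana ida} coincide, so that the scalar $m$ and the scalar $a$ refer to the same direction $Jh_0$. I would handle this by working throughout in the basis $\{Jh_0,x_1,Jx_1,\dots\}$ of $W$. The question raised at the end of the observation is not a statement to be proved here; I would defer it to the five-dimensional classification, where non-unimodular non-product examples should be exhibited.
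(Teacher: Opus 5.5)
Your proposal is correct and follows the paper's own route: as in the proof of Corollary \ref{coro importante}, you use $M|_W=\frac{1}{\lambda_1}\ad_{h_0}|_W$ to get $a=\lambda_1 m\neq 0$, then apply $u_0=\mu_1\Z+\mu_2h_0$ to $Jh_0$ to force $\mu_2=0$, so that $\Z\in\u$. Your explicit checks that $D=\ad_\Z|_\h=0$ and that $\tr(\ad_{h_0}|_W)=a\neq 0$ (because $\tilde A\in\u(n-1)$ is traceless) fill in details that the paper leaves implicit.
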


	\section{Low-dimensional Lie algebras admitting coK\"ahler structures}
	
	In this section we first apply the structural results of Section $4$ to give explicit lists of almost abelian Lie algebras admitting coK\"ahler structures; we give the lists in dimensions $5$ and $7$. 
	
	Then we answer the question in Observation \ref{obs:splitting-special} and show examples outside the almost abelian setting of non-unimodular coK\"ahler Lie algebras which do not split as a product of a K\"ahler Lie algebra with $\R$. 
	To see such phenomena, we use the classification of four-dimensional K\"ahler Lie algebras due to Ovando \cite{Ov} together with the Fino--Vezzoni correspondence to give the complete list of $5$-dimensional Lie algebras admitting coK\"ahler structures. Note that extensions of K\"ahler $4$-dimensional Lie algebras are mentioned in \cite[Theorem 5.6]{calv-per}, but in that result the K\"ahler Lie algebra is not abelian (therefore the almost abelian cases in dimension $5$ are not included); moreover, several Lie algebras appearing there are isomorphic. We therefore reduce the resulting extensions up to Lie algebra isomorphism.
	
	\subsection[Almost abelian Lie algebras with coK\"ahler structures in low dimensions]{Almost abelian Lie algebras with coK\"ahler structures in low dimensions}
	
	Theorems \ref{casi abeliana ida} and \ref{casi abeliana vuelta} completely characterize almost abelian coK\"ahler Lie algebras. Moreover, Corollary \ref{cokahler es plana o producto direct}, together with the extension procedure summarized in Table \ref{tab:mi_tabla}, allows us to classify all almost abelian Lie algebras carrying a coK\"ahler structure in low dimensions. In particular, we exhibit this classification in dimensions $5$ and $7$. 
	We use the notation
	\[R(\lambda) := \left(\begin{array}{cc}
		0  & -\lambda  \\
		\lambda  & 0
	\end{array}\right), \quad 
	c\oplus R(\lambda) :=\left(
	\begin{array}{c|cc}
		c & 0 & 0 \\ \hline
		0 & 0 & -\lambda \\
		0 & \lambda & 0
	\end{array}
	\right),\quad 
	R(\mu)\oplus R(\lambda) :=\left(
	\begin{array}{cc}
		0 & -\mu \\
		 \mu & 0
	\end{array}
	\right)\oplus \left(
	\begin{array}{cc}
	    0 & -\lambda \\
		 \lambda & 0
	\end{array}
	\right).\]
	
	\subsubsection*{Dimension 5}

	By Proposition \ref{kahler casi abel}, every $4$-dimensional almost abelian Kähler Lie algebra is of the form $	\mathbb{R} h_0 \ltimes_{A} \mathbb{R}^3,$
	where $A=a\oplus R(\lambda)$ with a $\ge 0 $ and $\lambda \geq 0$.
	By Lemma \ref{isomorf casi abel}, up to isomorphism there are only three non-isomorphic $4$-dimensional almost abelian Kähler Lie algebras:
	\[
	\mathfrak{rr}_{3,0}= \mathbb{R} h_0 \ltimes_{1\oplus R(0)} \mathbb{R}^3,\qquad
	\mathfrak{r}'_{4,0,\lambda}=\mathbb{R} h_0 \ltimes_{1\oplus R(\lambda)} \mathbb{R}^3,\ \lambda>0,\quad
	\mathfrak{rr'}_{3,0}= \mathbb{R} h_0 \ltimes_{0\oplus R(1)} \mathbb{R}^3.
	\]
	The notation for these $4$-dimensional Lie algebras comes from \cite{ABDO}; see also Table \ref{tab:corchetes4d}. We extend them to $5$-dimensional Lie algebras using the structural results of the previous section, and we reduce them up to Lie algebra isomorphism using Lemma \ref{isomorf casi abel} again to obtain: 
	\begin{prop}
		Any $5$-dimensional almost abelian Lie algebra admitting a coK\"ahler structure is isomorphic to one and only one of the following:
		\begin{itemize}
			\item $\R^5$,
			\item $\g_{1,0}:=\R \Z \ltimes_{R(1)\oplus R(0)} \R^4$,
			\item $\g_{1,\alpha}:=\R \Z \ltimes_{R(1)\oplus R(\alpha)} \R^4,\quad \alpha\in (0,1]$,
			\item $\R \Z \times \mathfrak{rr}_{3,0}$,
			\item $\R\Z \times \mathfrak{r}'_{4,0,\lambda}, \quad \lambda>0$.
		\end{itemize}
	\end{prop}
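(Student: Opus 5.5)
We combine Theorems \ref{casi abeliana ida} and \ref{casi abeliana vuelta} with $n=2$, so $\u\cong\R^4$, and then reduce the resulting matrices $M$ up to conjugation and nonzero rescaling using Lemma \ref{isomorf casi abel}. By those theorems, a $5$-dimensional almost abelian $\g=\R e_0\ltimes_M\R^4$ admits a coK\"ahler structure if and only if one of the following holds:
\begin{itemize}
\item[(a)] $M\in\u(2)$;
\item[(b)] $M=0\oplus 0\oplus R(\mu)$ with $\mu\in\R$;
\item[(c)] $M=0\oplus m\oplus R(\mu)$ with $m\neq 0$.
\end{itemize}
In case (a), the hypothesis $\ker M=0$ of Theorem \ref{casi abeliana vuelta}\ri{} is not actually needed for existence: the construction $\h=\u$, $\Z=e_0$, $D=M$ works for any $M\in\u(2)$. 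It suffices to use Theorem \ref{F-V} directly.

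In case (a), $M$ is skew-symmetric, so it is orthogonally conjugate to $R(\beta)\oplus R(\gamma)$. We may rescale and swap the blocks freely, and replace $\beta$ by $-\beta$ (this is conjugation by a reflection in the plane of that block). If $M=0$ we get $\R^5$. Otherwise we normalize the larger of $|\beta|,|\gamma|$ to $1$ and obtain $R(1)\oplus R(\alpha)$ with $\alpha\in[0,1]$; this gives $\g_{1,0}$ and $\g_{1,\alpha}$. Case (b) is the subcase of (a) with a nontrivial kernel, namely $R(\mu)\oplus R(0)$, so it lands in $\R^5$ or $\g_{1,0}$. In case (c), $M$ has one-dimensional kernel spanned by $u_0$, and $\g=\R u_0\times(\R e_0\ltimes_{m\oplus R(\mu)}\R^3)$. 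Rescaling by $1/m$, and by $-1$ if necessary (the sign of $\mu$ is irrelevant by the reflection argument), gives $1\oplus R(\lambda)$ with $\lambda\ge0$. This yields $\R\times\mathfrak{rr}_{3,0}$ when $\lambda=0$ and $\R\times\mathfrak r'_{4,0,\lambda}$ when $\lambda>0$.

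For uniqueness we apply Lemma \ref{isomorf casi abel} and compare invariants of $M$ that are preserved under conjugation and nonzero scaling: the trace up to scale, the rank, and the set of eigenvalues up to a common nonzero factor. The algebras with $\tr M=0$ are $\R^5,\g_{1,0},\g_{1,\alpha}$. The remaining ones have a nonzero real eigenvalue, which distinguishes them from the unimodular ones. Among the unimodular algebras, $\R^5$ has $M=0$, and $\g_{1,0}$ has rank $2$ while $\g_{1,\alpha}$ with $\alpha>0$ has rank $4$. For $\g_{1,\alpha}$ the spectrum is $\{\pm i,\pm i\alpha\}$. The ratio of moduli $\alpha\in(0,1]$ is invariant under scaling, so distinct values of $\alpha$ give non-isomorphic algebras. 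For case (c), the spectrum $\{0,1,\pm i\lambda\}$ is normalized so that the real nonzero eigenvalue equals $1$; then $\lambda\ge0$ is determined, and distinct values of $\lambda$ are distinct.

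The main obstacle is making sure that the normalizations are exhaustive and non-redundant. Two points need care. First, $c=-1$ is allowed in Lemma \ref{isomorf casi abel}; this is why $\alpha$ and $\lambda$ can be taken nonnegative. Second, one must confirm that case (c) cannot produce a unimodular algebra, so that there is no overlap between the (c) list and the (a)/(b) list. The second point is immediate because $\tr M=m\neq0$.
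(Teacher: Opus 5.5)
Your proposal is correct and follows essentially the same route as the paper. Both read off the admissible matrices $M$ from Theorems \ref{casi abeliana ida} and \ref{casi abeliana vuelta} and then reduce up to conjugation and rescaling via Lemma \ref{isomorf casi abel}; the paper phrases this step as extending $\R^4$, $\mathfrak{rr}_{3,0}$, $\mathfrak{r}'_{4,0,\lambda}$ and $\mathfrak{rr}'_{3,0}$, with Corollary \ref{coro importante} forcing direct products in the non-unimodular cases. Your explicit invariants (trace, rank, spectrum up to scale) and your remark that the hypothesis $\ker(M)=0$ in Theorem \ref{casi abeliana vuelta}\ri{} is not needed for existence fill in details the paper leaves implicit.
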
 
	\begin{proof}
	The abelian K\"ahler Lie algebra $\R^4$ determines the following non-isomorphic coK\"ahler extensions:
	\[
	\R^5,\qquad
	\g_{1,0}:=\R \Z \ltimes_{R(1)\oplus R(0)} \R^4,\qquad
	\g_{1,\alpha}:=\R \Z \ltimes_{R(1)\oplus R(\alpha)} \R^4,\quad \alpha\in (0,1].
	\]
	For the non-unimodular algebras $\mathfrak{rr}_{3,0}$ and $\mathfrak{r}'_{4,0,\lambda}$, Corollary \ref{coro importante} implies that the only almost abelian coK\"ahler extensions are the direct products 
	\[
	\R \Z \times \mathfrak{rr}_{3,0}, \qquad  \R\Z \times \mathfrak{r}'_{4,0,\lambda}, \quad \lambda>0.
	\]
	Finally, the extensions of $\mathfrak{rr'}_{3,0}$ are
	\[
	\R \Z \ltimes_{R(0)\oplus R(d)} \mathfrak{rr}'_{3,0}, \qquad d\in\R,
	\]
	and they are isomorphic to $\g_{1,0}$ by Lemma \ref{isomorf casi abel}.
	\end{proof}
	
	\subsubsection*{Dimension 7}
	Combining Proposition \ref{kahler casi abel} and Lemma \ref{isomorf casi abel}, we have that any $6$-dimensional almost abelian Kähler Lie algebra is isomorphic to one and only one of the following Lie algebras:
	 \begin{align*}
		&
		\h_1= \mathbb{R} h_0 \ltimes_{1\oplus R(0)\oplus R(0)} \mathbb{R}^5, \quad
		\h_2= \mathbb{R} h_0 \ltimes_{1\oplus R(\lambda)\oplus R(0)} \mathbb{R}^5,\,\lambda>0,\quad
		\h_3= \mathbb{R} h_0 \ltimes_{1\oplus R(\lambda_1)\oplus R(\lambda_2)} \mathbb{R}^5,\,0<\lambda_2\leq\lambda_1,\\
		&\h_4= \mathbb{R} h_0 \ltimes_{0\oplus R(1)\oplus R(0)} \mathbb{R}^5,\quad
		\h_5= \mathbb{R} h_0 \ltimes_{0\oplus R(1)\oplus R(\alpha)} \mathbb{R}^5,\,\alpha \in (0,1].
	\end{align*}
	
	Extending them to $7$-dimensional Lie algebras using the structural results of the previous section, and reducing them up to Lie algebra isomorphism by Lemma \ref{isomorf casi abel}, we obtain:
	
	\begin{prop}
	Any $7$-dimensional almost abelian Lie algebra admitting a coK\"ahler structure is isomorphic to one and only one of the following:
	\begin{itemize}
		\item $\R^7$,
		\item $\R\Z\ltimes_{R(1)\oplus R(0)\oplus R(0)} \R^6$,
		\item $\R\Z\ltimes_{R(1)\oplus R(\lambda)\oplus R(0)} \R^6,\, 0<\lambda\leq1$,
		\item $\R\Z\ltimes_{R(1)\oplus R(\lambda)\oplus R(\alpha)} \R^6,\,0<\alpha\leq\lambda\leq1$,
		\item $\R\Z \times \h_1,$
		\item $\R\Z \times \h_2,\, \lambda>0$,
		\item$\R\Z \times \h_3,\, 0<\lambda_2\leq\lambda_1$.
	\end{itemize}
	\end{prop}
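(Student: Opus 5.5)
We follow the same scheme as in dimension $5$, now combining Theorems \ref{casi abeliana ida} and \ref{casi abeliana vuelta} (equivalently Table \ref{tab:mi_tabla}) with Lemma \ref{isomorf casi abel}. By Theorem \ref{casi abeliana ida}, a $7$-dimensional almost abelian coK\"ahler Lie algebra $\g=\R e_0\ltimes_M\R^6$ has either $M\in\u(3)$, or $M=0\oplus m\oplus\tilde M$ with $\tilde M\in\u(2)$. Conversely, Theorem \ref{casi abeliana vuelta} shows that every such $M$ occurs. The classification therefore reduces to listing these matrices up to conjugation and nonzero rescaling.

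\emph{Unimodular case.} Here either $M\in\u(3)$, or $m=0$ and $M=0\oplus0\oplus\tilde M$, which is again in $\u(3)$. Every $M\in\u(3)$ is skew-symmetric and commutes with a complex structure, so it is conjugate to $R(\theta_1)\oplus R(\theta_2)\oplus R(\theta_3)$ with $\theta_1\ge\theta_2\ge\theta_3\ge0$. If $M=0$ we get $\R^7$. Otherwise we rescale by $1/\theta_1$, so that $\theta_1=1$. Lemma \ref{isomorf casi abel} then leaves $(1,\lambda,\alpha)$ with $1\ge\lambda\ge\alpha\ge0$ as the invariant: the set of absolute values of the eigenvalues $\pm i\theta_j$ is a conjugacy invariant, and rescaling by $c<0$ only flips signs. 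Splitting according to how many $\theta_j$ vanish produces the three families $R(1)\oplus R(0)\oplus R(0)$, $R(1)\oplus R(\lambda)\oplus R(0)$ with $0<\lambda\le1$, and $R(1)\oplus R(\lambda)\oplus R(\alpha)$ with $0<\alpha\le\lambda\le1$. We also record that the extensions of the unimodular Kähler algebras $\h_4,\h_5$ land in this list, as in the treatment of $\mathfrak{rr}'_{3,0}$ in dimension $5$. So they contribute nothing new.

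\emph{Non-unimodular case ($m\ne0$).} By Corollary \ref{coro importante} (or Theorem \ref{casi abeliana vuelta}\,\riii), $\g=\R\Z\times\h$, where $\h=\R e_0\ltimes_{m\oplus\tilde M}\R^5$ is a non-unimodular almost abelian Kähler Lie algebra. Rescaling gives $m=1$, and conjugating $\tilde M$ gives $R(\lambda_1)\oplus R(\lambda_2)$ with $\lambda_1\ge\lambda_2\ge0$. This identifies $\h$ with exactly one of $\h_1,\h_2,\h_3$.

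\emph{Uniqueness.} We separate the families by invariants of $M$ that are unchanged under conjugation and rescaling by $c\ne0$:
\begin{itemize}
\item whether $\tr M=0$, i.e.\ unimodularity;
\item $\dim\ker M$;
\item the ratios of the moduli of the eigenvalues.
\end{itemize}
Within the non-unimodular families the real eigenvalue normalizes to $1$, and this fixes $c=1$. Hence $\lambda,\lambda_1,\lambda_2$ are invariants, since no sign ambiguity survives for the moduli.

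The main obstacle is bookkeeping in the normalization. In the unimodular case the rescaling $c$ must be chosen so that the largest rotation parameter equals $1$, and one has to check that the orderings $\alpha\le\lambda\le1$ pick exactly one representative. In the product case, $c$ is forced by $m=1$, so the parameters in $\h_2,\h_3$ are not rescaled further. In both cases I would verify this carefully via the spectrum of $M$.
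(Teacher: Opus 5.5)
Your proof is correct and uses the same ingredients as the paper: Theorems \ref{casi abeliana ida} and \ref{casi abeliana vuelta}, Corollary \ref{coro importante}, and Lemma \ref{isomorf casi abel}. The only difference is bookkeeping. The paper runs through the six-dimensional K\"ahler algebras $\R^6,\h_1,\dots,\h_5$ and extends each one via Table \ref{tab:mi_tabla}, whereas you enumerate the normal forms of $M$ directly and make the \emph{only one} part explicit through the spectrum of $M$, which the paper leaves implicit.
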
 
	\begin{proof}
	The abelian K\"ahler Lie algebra extends to
	$$
	\R^{7},\quad
	\R\Z\ltimes_{R(1)\oplus R(0)\oplus R(0)} \R^6,\quad
	\R\Z\ltimes_{R(1)\oplus R(\lambda)\oplus R(0)} \R^6,\, 0<\lambda\leq1, \quad
	\R\Z\ltimes_{R(1)\oplus R(\lambda)\oplus R(\alpha)} \R^6,\,0<\alpha\leq\lambda\leq1.
	$$
	For $\h_1,\h_2,$ and $\h_3$, which are non-unimodular, the almost abelian coK\"ahler extensions are the direct products
	$$
	\R\Z \times \h_1,\quad
	\R\Z \times \h_2,\, \lambda>0,\quad
	R\Z \times \h_3,\, 0<\lambda_2\leq\lambda_1.
	$$
	Finally, the extensions of $\h_4$ and $\h_5$ are
	$\R\Z \ltimes_{R(0)\oplus R(d) \oplus R(0)}\h_4,$ and 
	$\R\Z \ltimes_{R(0)\oplus R(d) \oplus R(\alpha d)} \h_5,\, d\in\R,$
	which are isomorphic to $\R\Z\ltimes_{R(1)\oplus R(0)\oplus R(0)} \R^6$ and $\R\Z\ltimes_{R(1)\oplus R(\lambda)\oplus R(0)} \R^6,\, 0<\lambda\leq1$, respectively.
\end{proof}
	\begin{obs}
	The same construction can be used to give the classification of almost abelian coK\"ahler Lie algebras up to isomorphism in each odd dimension.		
	\end{obs}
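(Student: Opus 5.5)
The plan is to run the argument of the dimension $5$ and $7$ cases for arbitrary $n$, writing $\dim\g=2n+1$. Here $\g=\R e_0\ltimes_M\u$ with $\dim\u=2n$. The procedure has three stages: first reduce, via the structural theorems, to explicit normal forms for $M$; then normalize these forms up to the equivalence of Lemma \ref{isomorf casi abel}; finally check that distinct normal forms give non-isomorphic algebras.

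\emph{Step 1 (existence side).} By Theorem \ref{casi abeliana ida}, if $\g$ is coK\"ahler then one of two cases holds.
\begin{itemize}
\item[(a)] $M\in\u(n)$. Case \rii\ of Theorem \ref{casi abeliana vuelta} also falls here: the matrix $0\oplus 0\oplus\tilde M$ is itself in $\u(n)$, since $\{u_0,Jh_0\}$ can be declared a complex line.
\item[(b)] $M=0\oplus m\oplus\tilde M$ with $m\neq 0$ and $\tilde M\in\u(n-1)$. By Corollary \ref{coro importante}, in this case $\g=\R\Z\times\h$ with $\h=\R h_0\ltimes_{m\oplus\tilde M}\R^{2n-1}$ a non-unimodular almost abelian K\"ahler Lie algebra.
\end{itemize}
Conversely, Theorem \ref{casi abeliana vuelta} shows that every such $M$ occurs. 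So the classification reduces to classifying these matrices $M$ up to conjugation and nonzero rescaling.

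\emph{Step 2 (normal forms).} In case (a), $M$ is a real skew-symmetric matrix, so it is orthogonally conjugate to $R(\lambda_1)\oplus\cdots\oplus R(\lambda_n)$. Since $R(\lambda)$ is conjugate to $R(-\lambda)$, we may take $\lambda_1\ge\cdots\ge\lambda_n\ge0$. If $M=0$ we get $\R^{2n+1}$. Otherwise rescaling by $c=1/\lambda_1$ gives
\[\R\Z\ltimes_{R(1)\oplus R(\lambda_2)\oplus\cdots\oplus R(\lambda_n)}\R^{2n},\qquad 1\ge\lambda_2\ge\cdots\ge\lambda_n\ge0.\]
In case (b), rescale so that $m=1$; the block $\tilde M$ becomes $R(\lambda_1)\oplus\cdots\oplus R(\lambda_{n-1})$ with $\lambda_1\ge\cdots\ge\lambda_{n-1}\ge0$. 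No further normalization is available, because the scale has been used to fix $m=1$. This yields
\[\R\Z\times\bigl(\R h_0\ltimes_{1\oplus R(\lambda_1)\oplus\cdots\oplus R(\lambda_{n-1})}\R^{2n-1}\bigr).\]
Sub-cases according to how many $\lambda_i$ vanish reproduce the lists for $n=2,3$.

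\emph{Step 3 (uniqueness).} This is the step requiring care. All matrices above are semisimple over $\mathbb{C}$, so two of them are conjugate if and only if they have the same spectrum.
\begin{itemize}
\item The families in (a) and (b) are separated by the trace: $\tr M=0$ in (a) and $\tr M=m\neq0$ in (b). Rescaling by $c\neq0$ preserves whether the trace vanishes.
\item Within (b), suppose $M_1$ is conjugate to $cM_2$. Comparing the unique nonzero real eigenvalue, which is $1$ for both after normalization, forces $c=1$. The multisets $\{\pm i\lambda_j\}$ must then coincide, so the ordered parameters are equal.
\item Within (a), the spectrum is $\{\pm i\lambda_j\}$. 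Conjugacy of $M_1$ with $cM_2$ forces $|c|=1$, because the largest modulus is $1$ for both. The multisets $\{|\lambda_j|\}$ must then coincide.
\end{itemize}
The expected obstacle is exactly this bookkeeping. One must check that the sign of $c$ and the possible vanishing of some $\lambda_j$ create no extra identifications, and, in case (a), that the orderings chosen are genuinely canonical. Both follow from the spectral description once the normal forms are fixed as above.
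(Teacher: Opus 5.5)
Your proposal is correct, and it reorganizes the classification rather than following the paper's construction. It uses the same ingredients: Theorems \ref{casi abeliana ida} and \ref{casi abeliana vuelta} together with Lemma \ref{isomorf casi abel}.

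The paper's construction, as carried out in dimensions $5$ and $7$, proceeds in three steps:
\begin{itemize}
\item first it lists the $2n$-dimensional almost abelian K\"ahler Lie algebras $\h$, using Proposition \ref{kahler casi abel} and Lemma \ref{isomorf casi abel};
\item then it extends each one through the correspondence of Table \ref{tab:mi_tabla};
\item finally it removes repetitions. For example, the extensions of the unimodular non-abelian $\h$ (such as $\mathfrak{rr}'_{3,0}$, $\h_4$, $\h_5$) turn out to coincide with extensions of $\R^{2n}$.
\end{itemize}

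You instead parametrize directly by the matrix $M$, and this buys two things:
\begin{itemize}
\item The coincidences become automatic. Every unimodular case lands in your family (a), every non-unimodular one in (b), and the trace separates the two.
\item Your Step 3 supplies the uniqueness argument behind the ``one and only one'' in the lists, which the paper asserts without detail. It uses semisimplicity, the unique nonzero real eigenvalue in (b), and the spectral radius in (a).
\end{itemize}
What the paper's route retains, and yours does not display, is which K\"ahler algebra $\h$ and derivation $D$ realize a given $\g$. That is, it records the coK\"ahler structures themselves and not only the underlying Lie algebras.

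One small point in your converse step. For $M\in\u(n)$ with non-trivial kernel, case \ri\ of Theorem \ref{casi abeliana vuelta} does not literally apply, since it assumes a trivial kernel. However, $\ker M$ is $J$-invariant, so $M=0_2\oplus\tilde M$ with $\tilde M\in\u(n-1)$ and case \rii\ covers it. Alternatively, the abelian-$\h$ construction in the proof of \ri\ never uses the kernel hypothesis.
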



	\subsection[Lie algebras carrying coK\"ahler structures in dimension 5]{Lie algebras carrying coK\"ahler structures in dimension 5}
	We now turn to all five-dimensional coK\"ahler Lie algebras. The aim is to complete the characterization in \cite{calv-per} by adding the $5$-dimensional Lie algebras coming from the abelian $4$-dimensional one. We then exhibit examples related to Observation \ref{obs:splitting-special}.
	The following table, taken from \cite{ABDO}, lists the real four-dimensional Lie algebras and their non-zero brackets.
	
	\begin{table}[H]
		\centering
		\scalebox{.85}{
			\begin{tabular}{|c|c|}
				\hline
				Lie algebra & Brackets \\
				\hline
				$\R^{4}$ &$[e_i,e_j]=0, \quad \forall \ i,j\in\{1,2,3,4\}$ \\
				\hline
				$\mathfrak{rh}_{3}$ & $[e_1,e_2]=e_3$ \\
				\hline
				$\mathfrak{rr}_{3}$ & $[e_1,e_2]=e_2,\ [e_1,e_3]=e_2+e_3$ \\
				\hline
				$\mathfrak{rr}_{3,\lambda}$ & $[e_1,e_2]=e_2,\ [e_1,e_3]=\lambda e_3,\ \lambda\in[-1,1]$ \\
				\hline
				$\mathfrak{rr}'_{3,\gamma}$ & $[e_1,e_2]=\gamma e_2 - e_3,\ [e_1,e_3]=e_2 + \gamma e_3,\ \gamma\ge 0$ \\
				\hline
				$\mathfrak{r}_{2}\mathfrak{r}_{2}$ & $[e_1,e_2]=e_2,\ [e_3,e_4]=e_4$ \\
				\hline
				$\mathfrak{r}'_{2}$ & $[e_1,e_3]=e_3,\ [e_1,e_4]=e_4,\ [e_2,e_3]=e_4,\ [e_2,e_4]=-e_3$ \\
				\hline
				$\mathfrak{n}_{4}$ & $[e_4,e_1]=e_2, \ [e_4,e_2]=e_3$ \\
				\hline
				$\mathfrak{r}_{4}$ & $[e_4,e_1]=e_1,\ [e_4,e_2]=e_1+e_2,\ [e_4,e_3]=e_2+e_3$ \\
				\hline
				$\mathfrak{r}_{4,\mu}$ & $[e_4,e_1]=e_1,\ [e_4,e_2]=\mu e_2,\ [e_4,e_3]=e_2+\mu e_3,\ \mu\in\mathbb{R}$ \\
				\hline
				$\mathfrak{r}_{4,\alpha,\beta}$ & $[e_4,e_1]=e_1,\ [e_4,e_2]=\alpha e_2,\ [e_4,e_3]=\beta e_3,$\\
				& $\text{with } -1<\alpha\le\beta\le1,\ \alpha\beta\ne0,\ \text{or } -1=\alpha\le\beta\le0$ \\
				\hline
				$\mathfrak{r}'_{4,\gamma,\delta}$ & $[e_4,e_1]=e_1,\ [e_4,e_2]=\gamma e_2 - \delta e_3,\ [e_4,e_3]=\delta e_2+ \gamma e_3, \  \gamma\in\mathbb{R},\ \delta>0$ \\
				\hline
				$\mathfrak{d}_{4}$ & $[e_1,e_2]=e_3,\ [e_4,e_1]=e_1,\ [e_4,e_2]=-e_2$ \\
				\hline
				$\mathfrak{d}_{4,\lambda}$ & $[e_1,e_2]=e_3,\ [e_4,e_3]=e_3, \ [e_4,e_1]=\lambda e_1,\ [e_4,e_2]=(1-\lambda)e_2,\ \lambda\ge\frac{1}{2}$ \\
				\hline
				$\mathfrak{d}'_{4,\delta}$ & $[e_1,e_2]=e_3,\ [e_4,e_1]=\tfrac{\delta}{2}e_1-e_2,\ [e_4,e_3]=\delta e_3,\ [e_4,e_2]=e_1+\tfrac{\delta}{2}e_2,\ \delta\ge0$ \\
				\hline
				$\mathfrak{h}_{4}$ & $[e_1,e_2]=e_3,\ [e_4,e_3]=e_3,  [e_4,e_1]=\tfrac{1}{2}e_1,\ [e_4,e_2]=e_1+\tfrac{1}{2}e_2$ \\
				\hline
		\end{tabular}}
		\caption{Four-dimensional Lie algebras and their brackets.}
		\label{tab:corchetes4d}
	\end{table}

	We now recall from \cite{Ov} the list of non-equivalent four-dimensional Lie algebras admitting a symplectic form $\omega$ and a complex structure $J$ satisfying
		$\omega(JX,JY)=\omega(X,Y), \ \ \forall X,Y \in \g$.
	We always use the basis $\{e_1,e_2,e_3,e_4\}$, $e^{ij}=e^{i}\wedge e^{j}$, and all coefficients are real.
	
	\begin{table}[H]
		\centering
		\renewcommand{\arraystretch}{1.1}
		\setlength{\tabcolsep}{4pt}
		\begin{tabular}{|c|c|c|}
			\hline
			Lie algebra & \centering $J$ & \centering $\omega$\tabularnewline
			\hline
			$\R^{4}$& $Je_1=e_2, \ Je_3=e_4$ & $e^{12}+e^{34}$ \\
			\hline
			$\mathfrak{rh}_{3}$ &
			$Je_{1}=e_{2},\ Je_{3}=e_{4}$ &
			$\begin{array}{l}
				a_{13+24}(e^{13}+e^{24})+a_{14-23}(e^{14}-e^{23})+a_{12}e^{12}, \\ a_{13}^{2}+a_{14}^{2}\neq0
			\end{array}$
			\\
			\hline
			$\mathfrak{rr}_{3,0}$ &
			$Je_{1}=e_{2},\ Je_{3}=e_{4}$ &
			$a_{12}e^{12}+a_{34}e^{34},\ a_{12}a_{34}\neq0$ \\
			\hline
			$\mathfrak{rr}'_{3,0}$ &
			$Je_{1}=e_{4},\ Je_{2}=e_{3}$ &
			$a_{14}e^{14}+a_{23}e^{23},\ a_{14}a_{23}\neq0$ \\
			\hline
			$\mathfrak{r}_{2}\mathfrak{r}_{2}$ &
			$Je_{1}=e_{2},\ Je_{3}=e_{4}$ &
			$a_{12}e^{12}+a_{34}e^{34},\ a_{12}a_{34}\neq0$ \\
			\hline
			$\mathfrak{r}'_{2} $ &
			$\begin{array}{l}
				J_{1}e_{1} = e_{3}, \; J_{1}e_{2} = e_{4} \\[20pt]
				J_{2}e_{1} = -e_{2}, \; J_{2}e_{3} = e_{4}
			\end{array}$  &
			$\begin{array}{l}
				a_{13-24}(e^{13}-e^{24}) + a_{14+23}(e^{14}+e^{23}),\\
				a_{13-24}^{2} + a_{14+23}^{2} \neq 0 \\[10pt]
				a_{13-24}(e^{13}-e^{24}) + a_{14+23}(e^{14}+e^{23}) + a_{12}e^{12},\\
				a_{13-24}^{2} + a_{14+23}^{2} \neq 0
			\end{array}$ \\
			\hline
			$\mathfrak{r}_{4,-1,-1}$ &
			$Je_{4}=e_{1},\ Je_{2}=e_{3}$ &
			$\begin{array}{l}
				a_{12+34}(e^{12}+e^{34})+a_{13-24}(e^{13}-e^{24})+a_{14}e^{14},\\ a_{12+34}^{2}+a_{13-24}^{2}\neq0
			\end{array}$\\
			\hline
			$\mathfrak{r}'_{4,0,\delta}$ &
			$\begin{array}{l}
				J_1e_{4}=e_{1},\ J_1e_{2}=e_{3},\\ J_{2}e_{4}=e_{1},\ J_{2}e_{2}=-e_{3}
			\end{array}$&
			$a_{14}e^{14}+a_{23}e^{23},\ a_{14}a_{23}\neq0$ \\
			\hline
			$\mathfrak{d}_{4,1}$ &
			$Je_{1}=e_{4},\ Je_{2}=e_{3}$ &
			$a_{12-34}(e^{12}-e^{34})+a_{14}\,e^{14},\ a_{12-34}\neq0$ \\
			\hline
			$\mathfrak{d}_{4,2}$ &
			$\begin{array}{l}
				J_1e_{4}=-e_{2},\ J_1e_{1}=e_{3}\\[10pt]
				J_2e_4=-2e_1, \ J_2e_2=e_3
			\end{array}$ &
			$\begin{array}{l}
				a_{14+23}(e^{14}+e^{23})+a_{24}e^{24}, \ a_{14+23}\neq0 \\[10pt]
				a_{14}e^{14}+a_{23}e^{23}, \ a_{14}a_{23}\neq 0
			\end{array}$ \\
			\hline
			$\mathfrak{d}_{4,\frac{1}{2}}$ &
			\begin{tabular}[c]{@{}l@{}}$J_{1}e_{4}=e_{3},\ J_{1}e_{1}=e_{2}$\\ $J_{2}e_{4}=e_{3},\ J_{2}e_{1}=-e_{2}$\end{tabular} &
			$a_{12-34}(e^{12}-e^{34}),\ a_{12-34}\neq0$ \\
			\hline
			$\mathfrak{d}'_{4,\delta}$ &
			\begin{tabular}[c]{@{}l@{}}$J_{1}e_{4}=e_{3},\ J_{1}e_{1}=-e_{2}$\\ $J_{2}e_{4}=-e_{3},\ J_{2}e_{1}=e_{2}$\\ $J_{3}e_{4}=-e_{3},\ J_{3}e_{1}=-e_{2}$\\ $J_{4}e_{4}=e_{3},\ J_{4}e_{1}=e_{2}$\end{tabular} &
			$a_{12-\delta34}(e^{12}-\delta e^{34}),\ a_{12-\delta34}\neq0$ \\
			\hline
		\end{tabular}
		\caption{Complex structures and symplectic forms.}
		\label{tab:complex-symplectic}
	\end{table}

By Theorem \ref{F-V}, every five-dimensional coK\"ahler Lie algebra is obtained from a four-dimensional K\"ahler Lie algebra $\h$ and a skew-symmetric derivation $D$ commuting with its complex structure. 
Thus, starting from the four-dimensional list in Table \ref{tab:complex-symplectic}, one proceeds in two steps: first, keep only those pairs $(J,\omega)$ for which
\[
\langle X,Y\rangle=\omega(-JX,Y)
\]
is positive definite; second, for each remaining K\"ahler Lie algebra, compute the skew-symmetric derivations commuting with $J$. 
The result is summarized in Table \ref{tab:kahler-derivaciones-dim4}.

\begin{table}[H]

\centering
\scriptsize
\begin{adjustbox}{width=\textwidth}
\begin{tabular}{|c|c|c|c|}
\hline
\textbf{$\h$} & \textbf{$J$} & \textbf{$\prodint$} & \textbf{Skew-symmetric derivations commuting with $J$} \\
\hline
$\R^4$ 
& $Je_1=e_2,\ Je_3=e_4$
& $I_4$
& $\left(\begin{array}{rrrr}
0&-a&-b&-c\\
a&0&c&-b\\
b&-c&0&-d\\
c&b&d&0
\end{array}\right)$ \\
\hline
$\mathfrak{rr}'_{3,0}$
& $Je_1=e_4,\ Je_2=e_3$
& $\operatorname{diag}(a,b,b,a),\ a,b>0$
& $\left(\begin{array}{rrrr}
0&0&0&0\\
0&0&d&0\\
0&-d&0&0\\
0&0&0&0
\end{array}\right)$ \\
\hline
$\mathfrak{rr}_{3,0}$
& $Je_1=e_2,\ Je_3=e_4$
& $\operatorname{diag}(a,a,b,b),\ a,b>0$
& $\left(\begin{array}{rrrr}
0&0&0&0\\
0&0&0&0\\
0&0&0&d\\
0&0&-d&0
\end{array}\right)$ \\
\hline
$\mathfrak{r}_2\mathfrak{r}_2$
& $Je_1=e_2,\ Je_3=e_4$
& $\operatorname{diag}(a,a,b,b),\ a,b>0$
& $0$ \\
\hline
$\mathfrak{r}'_{4,0,\delta}$
& $J_1$
& $\operatorname{diag}(-a,b,b,-a),\ a<0,\ b>0$
& $\left(\begin{array}{rrrr}
0&0&0&0\\
0&0&d&0\\
0&-d&0&0\\
0&0&0&0
\end{array}\right)$ \\
\hline
$\mathfrak{r}'_{4,0,\delta}$
& $J_2$
& $\operatorname{diag}(-a,-b,-b,-a),\ a,b<0$
& $\left(\begin{array}{rrrr}
0&0&0&0\\
0&0&d&0\\
0&-d&0&0\\
0&0&0&0
\end{array}\right)$ \\
\hline
$\mathfrak{d}_{4,2}$
& $J_2$
& $\operatorname{diag}(\frac12a,b,b,2a),\ a,b>0$
& $0$ \\
\hline
$\mathfrak{d}_{4,\frac12}$
& $J_1$
& $aI_4,\ a>0$
& $\left(\begin{array}{rrrr}
0&d&0&0\\
-d&0&0&0\\
0&0&0&0\\
0&0&0&0
\end{array}\right)$ \\
\hline
$\mathfrak{d}'_{4,\delta}$
& $J_3$
& $\operatorname{diag}(-a,-a,-\delta a,-\delta a),\ a<0$
& $\left(\begin{array}{rrrr}
0&d&0&0\\
-d&0&0&0\\
0&0&0&0\\
0&0&0&0
\end{array}\right)$ \\
\hline
$\mathfrak{d}'_{4,\delta}$
& $J_4$
& $\operatorname{diag}(a,a,\delta a,\delta a),\ a>0$
& $\left(\begin{array}{rrrr}
0&d&0&0\\
-d&0&0&0\\
0&0&0&0\\
0&0&0&0
\end{array}\right)$ \\
\hline
\end{tabular}
\end{adjustbox}
\caption{Four-dimensional K\"ahler Lie algebras from Table \ref{tab:complex-symplectic} together with the skew-symmetric derivations commuting with the corresponding complex structure.}
\label{tab:kahler-derivaciones-dim4}

\end{table}

Each row of Table \ref{tab:kahler-derivaciones-dim4} gives a five-dimensional coK\"ahler extension
\[
\g=\R e_5\ltimes_D \h.
\]
In \cite[Theorem 5.6]{calv-per} the authors exhibit a similar table; however, the extensions given by the abelian K\"ahler Lie algebra are not considered. Here we also include the extension of $\R^4$, and we also exhibit in Table \ref{tab:kahler-derivaciones-dim4} the K\"ahler metric for all cases. 

\begin{obs}
In \cite[Theorem 5.6]{calv-per} the authors classify five-dimensional Lie algebras admitting coK\"ahler structures up to equivalence of coK\"ahler structures. 
More precisely, two coK\"ahler structures of the form $(\g,J,\Z,\alpha,\prodint)$ are equivalent if there is a Lie algebra isomorphism preserving $(J,\Z,\alpha,\prodint)$. 
However, many of the examples constructed in this way may yield isomorphic Lie algebras, even if they carry non-equivalent coK\"ahler structures. 
\end{obs}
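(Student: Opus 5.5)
The statement is essentially an observation, so the proof I would give is a matter of exhibiting explicit pairs in Table \ref{tab:kahler-derivaciones-dim4}. Each pair should give non-equivalent coK\"ahler structures on isomorphic Lie algebras. The plan has three kinds of examples, ordered from the most transparent to the least.

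\textbf{Scaling the metric.} Take $\h=\mathfrak{rr}_{3,0}$ with $D=0$, so that $\g=\R e_5\times\mathfrak{rr}_{3,0}$, equipped with the metrics $\operatorname{diag}(a,a,b,b)\oplus(1)$ for different $a,b>0$. All of these live on the same Lie algebra.
\begin{itemize}
\item To show that two choices, say $(a,b)=(1,1)$ and $(a,b)=(t,1)$ with $t\neq1$, are not equivalent, I would use an isometry invariant.
\item An equivalence is a Lie algebra isomorphism preserving $\prodint$, so it preserves the sectional (or scalar) curvature of the left-invariant metric.
\item The scalar curvature of a non-unimodular metric Lie algebra scales nontrivially with the length of the vector $e_1$ that acts. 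So the scalar curvature distinguishes the two structures.
\end{itemize}
This step is a routine Koszul-formula computation on the $3$-dimensional factor $\mathfrak{r}_2\times\R$.

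\textbf{Varying the derivation.} Take $\h=\mathfrak{rr}'_{3,0}$ and $D$ with parameter $d$. By the computation already done in the proof of the five-dimensional almost abelian proposition, all these extensions are isomorphic to $\g_{1,0}$ via Lemma \ref{isomorf casi abel}. Similarly, the extensions of $\R^4$ by $D$ and by $cD$ are isomorphic for $c\neq0$.
\begin{itemize}
\item Here I would argue non-equivalence as follows. An equivalence must map $\xi$ to $\xi$ and preserve the metric, so it must conjugate $\ad_\xi|_{\h}$ by an orthogonal map.
\item Conjugation preserves the spectrum $\{\pm i\,d\}$. Hence two different values $|d|\neq|d'|$ yield non-equivalent structures on the same Lie algebra.
\item The degenerate case $d=0$ versus $d\neq0$ (i.e.\ $\R\times\mathfrak{rr}'_{3,0}$ versus an extension) gives a further instance. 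By the almost abelian proposition both are isomorphic to $\g_{1,0}$.
\end{itemize}

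\textbf{Changing the complex structure.} The same Lie algebra $\mathfrak{r}'_{4,0,\delta}$ appears with both $J_1$ and $J_2$, and $\mathfrak{d}'_{4,\delta}$ appears with both $J_3$ and $J_4$. Each gives extensions $\R e_5\ltimes_D\h$ whose underlying Lie algebras coincide, since the bracket does not depend on $J$. Rather than prove non-equivalence in full generality, I would only remark that the Lie algebra is literally the same, which already illustrates the redundancy.

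The main obstacle is the non-equivalence claims, since equivalence is a strong condition. I expect the curvature and spectrum invariants above to suffice, with the spectrum of $\ad_\xi|_{\xi^\perp}$ as the first invariant to try, because it is forced to be preserved once $\xi$ and the metric are.
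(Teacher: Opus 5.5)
Your argument is correct, but it takes a different route from the paper. The paper gives no separate proof of this remark. It is supported by the reduction in the proof of Proposition~\ref{prop:dim5-cokahler-all}, which exhibits explicit Lie algebra isomorphisms between distinct extensions:
\begin{itemize}
\item rescaling $d$ collapses each one-parameter family of derivations to the direct product and the case $d=1$;
\item all extensions of $\mathfrak{rr}'_{3,0}$ collapse onto the flat algebra $\g_{1,0}$, which also arises from $\R^4$;
\item most notably, extensions of non-isomorphic K\"ahler algebras coincide, e.g.\ $\R e_5\ltimes_D\mathfrak{r}'_{4,0,\delta}\cong\g_1$ and $\R e_5\ltimes_D\mathfrak{d}'_{4,\delta}\cong\g_4$.
\end{itemize}
Non-equivalence of the structures is simply inherited from \cite{calv-per}.

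You instead prove non-equivalence with explicit invariants, which the paper leaves implicit, and your computations check out. On $\R e_5\times\mathfrak{rr}_{3,0}$ with metric $\operatorname{diag}(a,a,b,b)\oplus(1)$, the scalar curvature is $-2/a$. An equivalence intertwines $\ad_\xi|_{\xi^\perp}$, so its spectrum separates $|d|\neq|d'|$, and also $d=0$ from $d\neq 0$.

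What your approach does not capture is the most informative part of the remark: the collapses between extensions of different K\"ahler algebras $\h$. Your metric-rescaling example is valid for the literal definition, but it is the least interesting instance. For the cross-family collapses, non-equivalence is even easier than with your invariants. Since $d\alpha=0$ and $\alpha=\langle\xi,\cdot\rangle$, the kernel $\h=\ker\alpha$ contains $[\g,\g]$ and is an ideal. Any equivalence therefore restricts to a Lie algebra isomorphism $\h\to\h'$ intertwining the complex structures, so $\h\not\cong\h'$ rules out equivalence at once. The same restriction argument would also finish your $J_1$ versus $J_2$ (resp.\ $J_3$ versus $J_4$) case, which you left unproved. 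It reduces that case to the fact that these complex structures on $\h$ are not conjugate under $\operatorname{Aut}(\h)$.
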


Therefore, we reduce, up to Lie algebra isomorphism, all $5$-dimensional Lie algebras admitting a coK\"ahler structure obtained from Table \ref{tab:kahler-derivaciones-dim4}, and we get the following classification result.

\begin{prop}\label{prop:dim5-cokahler-all}
Let $\g$ be a five-dimensional non-abelian Lie algebra admitting a coK\"ahler structure. Then $\g$ is isomorphic to one of the following Lie algebras:
\[
\begin{array}{ll}
\g_{1,\alpha}=\R e_5\ltimes_{D}\R^4
& [e_5,e_1]=-e_2,\ [e_5,e_2]=e_1,\ [e_5,e_3]=-\alpha e_4,\ [e_5,e_4]=\alpha e_3,\quad \alpha\in[0,1], \\
& D=R(1)\oplus R(\alpha),\\[1mm] 
\g_1= \mathbb{R}e_5\ltimes_D \mathfrak{rr}_{3,0},
& [e_1,e_2]=e_2,\ [e_5,e_3]=-e_4,\ [e_5,e_4]=e_3, \qquad D=R(0)\oplus R(1),\\[1mm]
\g_5=\mathbb{R}e_5 \times \mathfrak{rr}_{3,0},
& [e_1,e_2]=e_2,\\[1mm]
\g_2=\mathbb{R}e_5 \times \mathfrak{r}_2\mathfrak{r}_2,
& [e_1,e_2]=e_2,\ [e_3,e_4]=e_4,\\[1mm]
\g_{6,\delta}=\mathbb{R}e_5 \times \mathfrak{r}'_{4,0,\delta},
& [e_4,e_1]=e_1,\ [e_4,e_2]=-\delta e_3,\ [e_4,e_3]=\delta e_2,\quad \delta>0,\\[1mm]
\g_3=\mathbb{R}e_5 \times \mathfrak{d}_{4,2},
& [e_1,e_2]=e_3,\ [e_4,e_3]=e_3,\ [e_4,e_1]=2e_1,\ [e_4,e_2]=-e_2,\\[1mm]
\g_4=\mathbb{R}e_5 \ltimes_D \mathfrak{d}_{4,\frac12},
& [e_1,e_2]=e_3,\ [e_4,e_3]=e_3,\ [e_4,e_1]=\frac12e_1,\ [e_4,e_2]=\frac12e_2,\
 [e_5,e_1]=-e_2,\ [e_5,e_2]=e_1,\\
 &D=R(1)\oplus R(0),\\[1mm]
\g_7=\mathbb{R}e_5 \times \mathfrak{d}_{4,\frac12},
& [e_1,e_2]=e_3,\ [e_4,e_3]=e_3,\ [e_4,e_1]=\frac12e_1,\ [e_4,e_2]=\frac12e_2,\\[1mm]
\g_{8,\delta}=\mathbb{R}e_5 \times \mathfrak{d}'_{4,\delta},
& [e_1,e_2]=e_3,\ [e_4,e_1]=\frac{\delta}{2}e_1-e_2,\ [e_4,e_3]=\delta e_3,\ [e_4,e_2]=e_1+\frac{\delta}{2}e_2,\quad \delta>0.
\end{array}
\]

In particular, the first line gives the flat extensions arising from $\h=\R^4$ and $\h=\mathfrak{rr}'_{3,0}$ from Table \ref{tab:kahler-derivaciones-dim4}, while the remaining lines give the non-flat extensions.
\end{prop}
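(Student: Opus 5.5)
By Theorem \ref{F-V}, every five-dimensional coK\"ahler Lie algebra has the form $\g=\R e_5\ltimes_D\h$, where $\h$ is a four-dimensional K\"ahler Lie algebra and $D$ is a skew-symmetric derivation commuting with $J$. Up to equivalence, Ovando's list in Table \ref{tab:complex-symplectic}, filtered by positivity of $\omega(-J\cdot,\cdot)$, gives exactly the K\"ahler data of Table \ref{tab:kahler-derivaciones-dim4} together with the derivations listed there. So the plan is to go row by row through Table \ref{tab:kahler-derivaciones-dim4} and reduce each family $\R e_5\ltimes_D\h$ up to Lie algebra isomorphism. The basic tool is that replacing $e_5$ by $e_5+X$ with $X\in\h$, or rescaling $e_5$, does not change the isomorphism class but modifies $D$ by $\ad_X|_\h$ and scales it. Whenever $D$ is inner modulo scaling, the extension is therefore the product $\R\times\h$.

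\textbf{Flat rows.} For $\h=\R^4$, the general $D$ is a skew matrix commuting with $J$, i.e.\ an element of $\u(2)$. Its eigenvalues are $\pm i\mu,\pm i\nu$, so it is conjugate to $R(\mu)\oplus R(\nu)$. After rescaling and applying Lemma \ref{isomorf casi abel}, this gives $\R^5$ or $\g_{1,\alpha}$ with $\alpha\in[0,1]$. For $\h=\mathfrak{rr}'_{3,0}$, the algebra is almost abelian, and the proposition on five-dimensional almost abelian algebras shows that every extension is isomorphic to $\g_{1,0}$.

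\textbf{Non-flat rows.} For $\mathfrak{rr}_{3,0}$, there are two cases. If $d=0$, the extension is the product $\g_5$. If $d\neq0$, rescaling $e_5$ gives $\g_1$. I will then check $\g_1\not\cong\g_5$ by comparing derived algebras: $\g_1^1=\langle e_2,e_3,e_4\rangle$ while $\g_5^1=\langle e_2\rangle$. For $\mathfrak r_2\mathfrak r_2$ and $\mathfrak d_{4,2}$ we have $D=0$, which gives $\g_2$ and $\g_3$. For $\mathfrak r'_{4,0,\delta}$, with either $J_1$ or $J_2$, the derivation $D$ acts by a rotation on $\langle e_2,e_3\rangle$, which is a multiple of $\ad_{e_4}$ restricted there. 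Hence $e_5-(d/\delta)e_4$ is central, and the extension is $\R\times\mathfrak r'_{4,0,\delta}=\g_{6,\delta}$. The same argument should handle $\mathfrak d'_{4,\delta}$ with $\delta>0$: since $\ad_{e_4}$ contains a rotation on $\langle e_1,e_2\rangle$ plus the scaling part, I would subtract a suitable combination of $e_4$ and an inner derivation (using the $e_3$-terms) to make $e_5$ central. The result is $\g_{8,\delta}$. For $\delta=0$, I expect the algebra to fall under the $\mathfrak d_{4,\frac12}$ or flat cases, which I would check directly. For $\mathfrak d_{4,\frac12}$, $\ad_{e_4}$ is diagonal $(\tfrac12,\tfrac12,1)$ and has no rotational part. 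So the rotation $D=R(d)\oplus0$ is not inner, which yields the two algebras $\g_7$ ($d=0$) and $\g_4$ ($d\neq0$, rescaled to $d=1$).

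\textbf{Main obstacle.} The hardest step is proving that these reductions are sharp: that $D$ is genuinely outer in the cases $\g_1$ and $\g_4$, and that the listed algebras are pairwise non-isomorphic. For the first point I would show that the algebra has trivial center. For $\g_4$ this means that no combination $e_5+X$ is central, because the rotation eigenvalues $\pm i$ on $\langle e_1,e_2\rangle$ cannot be cancelled by the real spectrum of $\ad_{e_4}$. For the second point I would compare invariants: the dimensions of the center and of the derived algebra, nilpotency of the nilradical, unimodularity, and the spectrum of $\ad$ on the nilradical up to scaling. These invariants separate the continuous families by $\alpha$ and $\delta$. Since the statement only asserts that $\g$ is isomorphic to one of the listed algebras, the completeness direction is the essential part, and the distinctness checks are a supplementary verification.
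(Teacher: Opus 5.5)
Your strategy matches the paper's: go through Table \ref{tab:kahler-derivaciones-dim4}, absorb inner parts of $D$ into $e_5$, and rescale. The rows $\R^4$, $\mathfrak{rr}'_{3,0}$, $\mathfrak{rr}_{3,0}$, $\mathfrak r_2\mathfrak r_2$, $\mathfrak d_{4,2}$ and $\mathfrak d_{4,\frac12}$ are handled correctly. There is, however, a genuine error in the rows $\mathfrak r'_{4,0,\delta}$ and $\mathfrak d'_{4,\delta}$. These are exactly the two non-obvious identifications the paper's proof rests on.

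\textbf{The row $\mathfrak r'_{4,0,\delta}$.} Here $\ad_{e_4}$ is not just the rotation on $\langle e_2,e_3\rangle$: it also gives $[e_4,e_1]=e_1$, while $De_1=0$. With $f=e_5-\frac{d}{\delta}e_4$ you get $[f,e_2]=[f,e_3]=[f,e_4]=0$, but $[f,e_1]=-\frac{d}{\delta}e_1\neq0$, so $f$ is not central. In fact $D$ is not inner modulo scaling when $d\neq 0$. An inner derivation $c\,\ad_{e_4}+\ad_X$ with $X\in\langle e_1,e_2,e_3\rangle$ that agrees with $D$ on $e_1$ must have $c=0$, and then it vanishes on $\langle e_2,e_3\rangle$. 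So for $d\neq0$ the extension is not $\g_{6,\delta}$; it has trivial center. One further change of basis gives the correct answer. Put $g=e_4+\frac{\delta}{d}f$. Then
$[g,e_1]=0$, $[g,e_2]=-\delta e_3$, $[g,e_3]=\delta e_2$ and $[f,g]=0$.
Hence the algebra is $\langle f,e_1\rangle\times\langle g,e_2,e_3\rangle$, the product of the non-abelian $2$-dimensional algebra with $\mathfrak e(2)$. This is $\g_1$, for every $\delta>0$.

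\textbf{The row $\mathfrak d'_{4,\delta}$.} Your plan to make $e_5$ central cannot work here either. On $\langle e_1,e_2\rangle$ one has $\ad_{e_4}=\frac{\delta}{2}\mathrm{Id}+\rho$, where $\rho e_1=-e_2$ and $\rho e_2=e_1$, and $D=d\rho$. Therefore, in the basis $\{e_1,e_2,e_3,e_4\}$,
$D-d\,\ad_{e_4}=-d\delta\operatorname{diag}(\tfrac12,\tfrac12,1,0)$.
This is the grading derivation, which is not inner for $\delta>0$. Indeed, the $\langle e_1,e_2\rangle$-block of any inner derivation is $c(\frac{\delta}{2}\mathrm{Id}+\rho)$, which is a multiple of $\mathrm{Id}$ only when it is $0$. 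So for $d\neq0$ no $e_5+X$ is central, and the extension is not $\g_{8,\delta}$. Instead, take the basis
$\{e_1,\,e_2,\,e_3,\,-\frac{1}{d\delta}(e_5-de_4),\,\frac1d e_5\}$.
In it the brackets are exactly those of $\g_4$. Thus the $d\neq0$ extension of $\mathfrak d'_{4,\delta}$ is $\g_4$ for every $\delta>0$, which is the paper's claim.

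\textbf{Consequences and smaller points.} Since $\g_1$ and $\g_4$ appear in the list, the statement itself survives. But your argument for these two rows asserts false isomorphisms, and it would contradict the trivial-center test you propose later, so both rows must be redone as above. Two smaller points:
\begin{itemize}
\item The case $\delta=0$ of $\mathfrak d'_{4,\delta}$ does not arise at all, because $\omega=a(e^{12}-\delta e^{34})$ is then degenerate.
\item The closing flat/non-flat sentence follows from unimodularity. The first-line algebras are unimodular, hence flat by Proposition \ref{cokahler unimod}. All the others have some $\tr\ad\neq0$, and flat Lie algebras are unimodular.
\end{itemize}
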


\begin{proof}
	Consider first the semidirect product from the first row in Table \ref{tab:kahler-derivaciones-dim4}. Any Lie algebra in this family is isomorphic to $\R e_5\ltimes_{R(1)\oplus R(\alpha)}\R^4$ for $\alpha\in[0,1]$.
	
	Now, $\mathfrak{r}_2\mathfrak{r}_2$ and $\mathfrak{d}_{4,2}$ admit only the trivial derivation, and therefore they determine $\mathbb{R}e_5 \times \mathfrak{r}_2\mathfrak{r}_2$ and $\g_3=\mathbb{R}e_5 \times \mathfrak{d}_{4,2}$, respectively.
	
	Finally, each one-parameter family of derivations in Table \ref{tab:kahler-derivaciones-dim4} determines only two non-isomorphic extensions: the direct product and the one given by $d=1$, which is denoted by $D$. For example, $\mathfrak{rr}_{3,0}$ determines $\mathbb{R}e_5\times \mathfrak{rr}_{3,0}$ and $\mathbb{R}e_5\ltimes_D \mathfrak{rr}_{3,0}$ with $D=R(0)\oplus R(1)$. 
	
	Computing algebraic invariants for all cases, we can distinguish the non-isomorphic Lie algebras. In addition, one checks that $\mathbb{R}e_5 \ltimes_D \mathfrak{r}'_{4,0,\delta}$, with $D=\left(\begin{array}{rrrr}
		0&0&0&0\\
		0&0&d&0\\
		0&-d&0&0\\
		0&0&0&0
	\end{array}\right)$, is isomorphic to $\mathbb{R}e_5\ltimes_D \mathfrak{rr}_{3,0}$,
	with $D=R(0)\oplus R(1)$. Similarly, $\mathbb{R}e_5 \ltimes_D \mathfrak{d}_{4,\frac12}$,
	 with $D=R(1)\oplus R(0)$, is isomorphic to $\mathbb{R}e_5 \ltimes_D \mathfrak{d}'_{4,\delta}$ with $D=R(1)\oplus R(0)$. This gives the final list of non-isomorphic Lie algebras.
\end{proof}

\begin{obs}
For example, $\g_1$ and $\g_4$ are non-unimodular coK\"ahler Lie algebras which do not split as direct products of a K\"ahler Lie algebra and $\R$. 
These examples explain Observation \ref{obs:splitting-special}: the splitting phenomenon from Corollary \ref{cokahler es plana o producto direct} is a feature of the almost abelian case, not of arbitrary coK\"ahler Lie algebras.
\end{obs}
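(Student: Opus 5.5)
The claim to establish is that $\g_1$ and $\g_4$ from Proposition \ref{prop:dim5-cokahler-all} (i) admit coK\"ahler structures, (ii) are non-unimodular, and (iii) are not isomorphic, as Lie algebras, to any direct product $\h\times\R$ with $\h$ K\"ahler. The argument for (iii) rests on one observation: a direct product $\h\times\R$ always has nontrivial center, since the $\R$ factor is a central ideal. So the plan is to show that $\g_1$ and $\g_4$ have trivial center. This rules out any splitting off of a line, regardless of what $\h$ is, and in particular regardless of whether $\h$ is K\"ahler.

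For (i), nothing new is needed. Both algebras are produced by Theorem \ref{F-V} from rows of Table \ref{tab:kahler-derivaciones-dim4} with $d=1$:
\begin{itemize}
\item $\g_1$ comes from $\mathfrak{rr}_{3,0}$ with $D=R(0)\oplus R(1)$;
\item $\g_4$ comes from $\mathfrak{d}_{4,\frac12}$ with $D=R(1)\oplus R(0)$.
\end{itemize}
In each case $D$ is skew-symmetric for the listed metric and commutes with the listed $J$, so the coK\"ahler structure is given by $(\Z=e_5,\h,D)$.

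For (ii), we exhibit an element with $\tr(\ad)\neq 0$.
\begin{itemize}
\item In $\g_1$, the only bracket involving $e_1$ is $[e_1,e_2]=e_2$, so $\tr(\ad_{e_1})=1$.
\item In $\g_4$, $\ad_{e_4}$ acts diagonally on $e_1,e_2,e_3$ with eigenvalues $\tfrac12,\tfrac12,1$, and kills $e_4$ and $e_5$. Hence $\tr(\ad_{e_4})=2$.
\end{itemize}

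For (iii), take $x=\sum_i x_ie_i$ central and impose $[e_j,x]=0$ for suitable $j$.

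In $\g_1$:
\begin{itemize}
\item $[e_1,x]=x_2e_2$ gives $x_2=0$.
\item $[e_2,x]=-x_1e_2$ gives $x_1=0$.
\item $[e_5,x]=-x_3e_4+x_4e_3$ gives $x_3=x_4=0$.
\item $[e_3,x]=x_5e_4$ gives $x_5=0$.
\end{itemize}
Hence $\mathfrak z(\g_1)=0$.

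In $\g_4$:
\begin{itemize}
\item From $[e_4,x]=\tfrac12x_1e_1+\tfrac12x_2e_2+x_3e_3=0$ we get $x_1=x_2=x_3=0$.
\item Then $[e_1,x]=-\tfrac12x_4e_1+x_5e_2$. This is because $[e_1,e_4]=-\tfrac12e_1$ and $[e_1,e_5]=-[e_5,e_1]=e_2$. Setting it to zero forces $x_4=x_5=0$.
\end{itemize}
Hence $\mathfrak z(\g_4)=0$.

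Since both centers are trivial, neither algebra is of the form $\h\times\R$, which proves (iii). Combined with Corollary \ref{cokahler es plana o producto direct}, this shows that the product splitting is special to the almost abelian case.

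The only delicate point is conceptual rather than computational: "splitting" must be read as an abstract Lie algebra isomorphism $\g\cong\h\times\R$, not merely as $D=0$ for the particular structure chosen. The center argument covers exactly this, because it is invariant under isomorphism and independent of the chosen coK\"ahler structure.
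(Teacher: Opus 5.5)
Your proof is correct, and the computations check out: $\tr(\ad_{e_1})=1$ on $\g_1$, $\tr(\ad_{e_4})=2$ on $\g_4$, and both centers vanish. The sign conventions you used are right: $[e_3,e_5]=e_4$ in $\g_1$, and $[e_1,e_4]=-\tfrac12 e_1$, $[e_1,e_5]=e_2$ in $\g_4$.

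Your route differs from the paper's. The paper gives no standalone argument for this observation. It reads the claim off Proposition \ref{prop:dim5-cokahler-all}, where $\g_1$ and $\g_4$ are listed as classes distinct from the products $\R e_5\times\h$, with $\h$ ranging over Ovando's four-dimensional K\"ahler algebras. That distinction rests on ``algebraic invariants'' which are not displayed. The paper's approach buys the full isomorphism classification, from which the observation is a by-product. However, the non-splitting then depends on the completeness of the four-dimensional K\"ahler list and on unshown computations.

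Your argument uses a single isomorphism invariant, the center, so it is self-contained and gives more:
\begin{itemize}
\item No line factor splits off at all, whether or not the complementary factor is K\"ahler.
\item Every coK\"ahler structure on $\g_1$ or $\g_4$ has $D\neq0$. Indeed, $D=\ad_\Z|_\h$ vanishes exactly when $\Z$ is central. This also excludes the structure-level form of the almost abelian splitting (``that is, $D=0$'') in Corollary \ref{coro importante}.
\end{itemize}

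The center is the right invariant here, rather than decomposability. Note that $\g_1\cong\mathfrak{r}_2\times\mathfrak{e}(2)$, with $\mathfrak{r}_2=\mathrm{span}\{e_1,e_2\}$ and $\mathfrak{e}(2)=\mathfrak{so}(2)\ltimes\R^2=\mathrm{span}\{e_3,e_4,e_5\}$. So $\g_1$ is a direct product, just not one with an $\R$ factor.
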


\section*{Acknowledgements}
The authors would like to thank A. Andrada for his comments and suggestions to improve the exposition of this work.

\end{document}